\documentclass[11pt]{amsart}            

\usepackage{amsmath,amsfonts,amsthm,amssymb}%AMS数学符号字体等的宏包
\usepackage[numbers,sort&compress]{natbib}

\usepackage{amsaddr}

\usepackage{hyperref}
\hypersetup{hidelinks}

\usepackage{geometry}                        %页面设置          

\usepackage{fullpage}

\usepackage{graphicx}                          %插图
\usepackage{setspace}  
\usepackage[font=footnotesize,labelfont=bf]{caption}
\usepackage{placeins}
\usepackage{listings}
\usepackage{tabularx}
\usepackage{xcolor}
\usepackage{indentfirst}
\usepackage{dsfont}

\usepackage{multirow}
\usepackage{overpic}
\usepackage{diagbox}

\usepackage{verbatim}

\usepackage{enumerate}

\usepackage{booktabs}
\usepackage{adjustbox}
\usepackage{siunitx}

\usepackage[normalem]{ulem}

\usepackage{tikz}
\usepackage{hyperref}
\usepackage[capitalise,nameinlink]{cleveref}
\usetikzlibrary{positioning}
\usepackage[title]{appendix}

\usepackage{bm}
\usepackage{svg}
\usepackage{booktabs}

\makeatletter
\@addtoreset{equation}{section}
\makeatother

\numberwithin{figure}{section}

\newtheorem{proposition}{Proposition}[section]
\newtheorem{remark}{Remark}[section]

\newtheorem{lemma}[proposition]{Lemma}

\newcommand\dd{\mathrm{d}}
\newcommand\pp{\partial}

\newcommand\x{\bm{x}}
\newcommand\uvec{\mathbf{u}}
\newcommand\X{\mathbf{X}}

\usepackage{mhchem}

\begin{document}

\title[Detailed-Balance Master-Equation Discretizations]{Structure-Preserving Detailed-Balance Master-Equation Discretizations for Fokker--Planck Equations}

\author{Satish Chandran \and Yiwei Wang$^*$}
\address{Department of Mathematics, University of California Riverside, Riverside, CA 92521, United States}
\email[Corresponding author]{yiweiw@ucr.edu}

\begin{abstract}
We develop a variational--Markov construction of detailed-balance
master-equation discretizations for Fokker--Planck equations directly from
their energy--dissipation laws. Rather than discretizing the differential
operator, we represent mass transfer between neighboring grid points by two
directional jump rates. On each edge, the discrete energy--dissipation law
determines the net flux, while detailed balance fixes the rate ratio; a local
reconstruction of edge quantities from neighboring grid-point values then
uniquely determines both rates. A logarithmic-mean reconstruction recovers the classical Scharfetter--Gummel/Wang--Peskin--Elston rates, while alternative
reconstructions yield other reversible schemes. The resulting semi-discrete
systems conserve mass, preserve nonnegativity, satisfy detailed balance, and
dissipate a discrete free energy. The same edgewise construction extends to state-dependent and degenerate mobilities, nonlocal interaction energies and higher-dimensional problems. The corresponding implicit and semi-implicit fully discrete schemes are linear, mass-conservative, positivity-preserving, and energy-stable, with a time-step restriction required only for nonlocal interaction energies whose kernel is not negative semidefinite. Numerical experiments confirm the expected spatial convergence rates of all detailed-balance schemes and verify their equilibrium accuracy,
positivity preservation, and free-energy decay with discontinuous potentials,
saturation, nonlocal interactions, and two-dimensional problems.

\end{abstract}

\maketitle

\section{Introduction}\label{sec:intro}
Fokker--Planck-type equations are widely used to describe the evolution of a conserved quantity under diffusion and drift, with applications in statistical physics, chemical kinetics, semiconductor transport, and biophysics \cite{keller1971model, schienbein1993langevin, Gillespie2002, epshteyn2022stochastic, Du2005, carrillo2019aggregation,Fitzgerald2023, Zakhleniuk2025}.
In this work, we focus on a thermodynamically consistent subclass of Fokker--Planck-type equations that admits a variational structure, given by
\begin{equation}\label{FP_general}
  \rho_t = \nabla \cdot \big( m(\rho, x) \nabla \mu\big), 
  \qquad 
  \mu = \frac{\delta \mathcal{F}}{\delta \rho}, \quad x \in \Omega
\end{equation}
where $\rho \geq 0$, $\mathcal{F}$ is a free-energy functional of $\rho$, $\mu$ is the associated chemical potential, and $m(\rho, x) \geq 0$ is a mobility function. Under a no-flux or periodic boundary condition, the total mass $\int_{\Omega} \rho ~\dd x$ is conserved, and the system satisfies an energy-dissipation law (EDL) 
  \begin{equation}\label{ED_mu_1}
  \frac{\dd}{\dd t} \mathcal{F}[\rho] = - \int_{\Omega} m(\rho, x) |\nabla \mu|^2 \dd x \leq 0. 
\end{equation}
A prototypical example is the linear Fokker--Planck equation 
\begin{equation}\label{FP_linear}
 \rho_t = \nabla \cdot \left( \rho \nabla (\ln \rho + V(x)) \right),
\end{equation}
in which the free energy and the mobility function are given by 
\begin{equation}\label{eq:intro-energy}
  \mathcal{F}[\rho]=\int_{\Omega} \bigl( \rho (\ln \rho - 1) + \rho\, V(x) \bigr) \,\dd x,
  \qquad m(\rho, x) = \rho.
\end{equation}
Here, $V(x)$ is an external potential. Other choices of the free energy and of the mobility lead to nonlinear and nonlocal equations, such as the porous-medium equation \cite{otto2001geometry}, the Keller--Segel equation \cite{keller1971model}, and the Poisson--Nernst--Planck equation \cite{eisenberg2010energy}.

It is often challenging to construct structure-preserving discretizations of Fokker--Planck-type equations \eqref{FP_general}, since such a discretization must preserve mass conservation and the nonnegativity of $\rho$, dissipate the discrete free energy, and reproduce the correct equilibrium \cite{Bubba2019}. These requirements are related but not equivalent.
A method may dissipate a discrete free energy and be accurate over short times while converging to a wrong or inaccurate equilibrium; conversely, accurately reproducing the equilibrium does not by itself imply comparable accuracy in the transient dynamics. % This distinction is particularly consequential for drift-dominated or discontinuous potentials\cite{wang2003robust,wang2007convergence,li2020large}.

Over the past decades, many structure-preserving schemes for Fokker--Planck-type equations have been developed \cite{SG1969,CHANG1970,wang2003robust, shen2020unconditionally, li2020large, liu2020lagrangian, heida2021consistency, cances2024discretizing}, which address the numerical challenges from different perspectives. One of the most widely used classes can be written
in exponential-fitting form and includes the Il'in--Allen--Southwell
schemes, originating from steady viscous-flow and singular-perturbation
problems \cite{AllenSouthwell1955,Iln1969}; the Scharfetter--Gummel (SG)
scheme for semiconductor drift--diffusion equations \cite{SG1969}; and the
Chang--Cooper (CC) \cite{CHANG1970} and Wang--Peskin--Elston (WPE)
schemes \cite{wang2003robust} for Fokker--Planck equations. Although these
methods were originally derived from different viewpoints, in the
one-dimensional linear constant-diffusion setting, with the drift
approximated as constant on each grid interval, they reduce to the same Bernoulli-function numerical flux.
In \cite{wang2003robust}, the WPE scheme was interpreted as the master equation of a discrete-state Markov chain whose forward and backward jump rates satisfy the detailed-balance condition with respect to the discrete Gibbs state \cite{wang2003robust}. Its convergence for discontinuous potentials and a corresponding improved variant were subsequently studied in \cite{wang2007convergence,wang2008convergence}. Consistency and error estimates for a family of finite-volume discretizations that includes the SG scheme were established in \cite{heida2021consistency}, while the large-time behavior of a related class of $B$-schemes was studied from a jump-process perspective in \cite{li2020large}. SG- and CC-type constructions have also been extended to nonlinear and nonlocal Fokker--Planck, aggregation--diffusion, and Keller--Segel-type equations \cite{BessemoulinChatard2012NM,BessemoulinChatard2012, Pareschi2018,Bubba2019, schlichting2022scharfetter, hraivoronska2023,  jungel2026generalized}. The same SG-type schemes can also be derived using the Slotboom, or non-logarithmic Landau, transform \cite{liu2018positivity, heida2021consistency,liu2020positive, liu2021efficient}.
Another closely related family consists of nonlinear upwind schemes
\cite{carrillo2015finite,chow2019entropy,BailoCarrilloHu2020}. Their
gradient-flow formulation was analyzed alongside SG-type discretization within a
common energy--dissipation framework in \cite{Bubba2019}. These schemes
first discretize the chemical potential and then upwind the density or
mobility according to the sign of its discrete difference. The numerical
flux is chosen so that each edge contributes nonpositively to the change
of the discrete energy, yielding a discrete energy--dissipation law.
When the edge mobilities are nondegenerate, zero dissipation is equivalent
to constancy of the discrete chemical potential on each connected
component and to vanishing edge fluxes. Equilibrium is thus characterized
without introducing two directional transition rates or an explicit
detailed-balance relation between them. Convergence, second-order
reconstructions, and bound-preserving extensions have been studied in
\cite{BailoCarrilloMurakawaSchmidtchen2020,BailoCarrilloHu2020,
BailoCarrilloHu2023}.

Despite these developments, most existing constructions start from a
particular representation of the partial differential equation, such as a
local stationary problem, a change of variable form, or a prescribed numerical
flux. Their derivations are therefore typically tied to the specific form of the equation and must be adapted when the mobility or free energy changes. A
related conceptual issue is that the distinct roles of the
discrete energy--dissipation law and detailed balance are not always made explicit.
The former constrains the transient dynamics through discrete dissipation,
whereas the latter encodes equilibrium information.

To address these issues, we propose a variational--Markov construction that
does not directly discretize the differential operator in
\eqref{FP_general}. Instead, the probability density is represented by a
finite-dimensional probability vector, and its mass-conserving evolution is
written as a master equation with unknown directional jump rates. The free energy and dissipation are discretized using the same
finite-dimensional probability representation. The two
structural conditions enter separately on each neighboring pair
(Fig.~\ref{fig:overview}): the discrete energy--dissipation law determines
the net flux, while detailed balance determines the ratio of the two
directional rates. Once this ratio is fixed, the two rates share a common edge prefactor. Since only grid-point values are available, the edge quantities entering this prefactor are reconstructed
locally from adjacent grid values. Together,
these relations uniquely determine the two rates. A logarithmic-mean
reconstruction recovers the classical SG/WPE rates, while alternative local
means yield other reversible schemes within the same formulation. Consistency is established through the consistency
of the discrete variational quantities and the reconstructed edge values.
Because the derivation depends only on these quantities and the neighboring
relations on the grid, the same argument extends naturally to more complex
Fokker--Planck-type systems, including state-dependent and degenerate
mobilities, nonlocal interaction energies, general \(f\)-divergence free
energies, and multiple dimensions. The resulting
decomposition also separates equilibrium from dynamics: the detailed-balance
ratio determines the discrete equilibrium, whereas the
common edge prefactor controls the transient evolution.

\begin{figure}[!ht]
\centering
\resizebox{\linewidth}{!}{%
\begin{tikzpicture}[
  font=\footnotesize,
  box/.style={
    rectangle,
    rounded corners=2pt,
    draw=blue!45!black,
    fill=blue!3,
    line width=0.7pt,
    align=center,
    inner xsep=6pt,
    inner ysep=5pt
  },
  flow/.style={
    ->,
    line width=0.7pt,
    draw=black!70
  }
]

\node[box, text width=60mm] (projection) {
  \textbf{Variational projection}\\[2pt]
  %$(\mathcal M, \mathcal F,\triangle) \longmapsto (\mathcal M_h, \mathcal F_h,\triangle_h)$\\
  Fokker--Planck
  $\longmapsto$
  master equation
};

\node[box, text width=29mm, right=7mm of projection] (edl) {
  \textbf{Discrete EDL}
};

\node[box, text width=29mm, above=4mm of edl] (db) {
  \textbf{Detailed balance}
};

\node[box, text width=29mm, below=4mm of edl] (recon) {
  \textbf{Reconstruction}
};

\node[
  box,
  fill=blue!7,
  text width=50mm,
  right=9mm of edl
] (master) {
  \textbf{Reversible master equation}\\[2pt]
  % directional jump rates\\[2pt]
  linear implicit or semi-implicit temporal discretization
};

\draw[flow] (projection.east) -- (edl.west);
\draw[flow] (projection.east) -- (db.west);
\draw[flow] (projection.east) -- (recon.west);

\draw[flow] (edl.east) -- (master.west);
\draw[flow] (db.east) -- (master.west);
\draw[flow] (recon.east) -- (master.west);

\end{tikzpicture}%
}
\caption{
Overview of the variational--Markov construction. The discrete energy--dissipation law determines the net flux, while detailed
balance determines the rate ratio. The symmetric edge coefficient is evaluated through
a local reconstruction from the adjacent grid-point values.  Together, these ingredients specify the two directional jump
rates in the discrete master equation, whose implicit or semi-implicit discretizations lead to linear, positivity-preserving, and
free-energy dissipative schemes for the Fokker--Planck-type equations.
}
\label{fig:overview}
\end{figure}

The construction also leads to simple fully discrete schemes. For the linear fixed-potential problem, backward Euler
produces a nonsingular \(M\)-matrix system. The update is therefore uniquely solvable, preserves mass and nonnegativity, and dissipates the discrete free
energy for every time-step size. For state-dependent mobilities, freezing the edge coefficients at the previous time level retains a linear,
unconditionally structure-preserving update, even when the mobility is degenerate. For nonlocal interaction energies, a semi-implicit treatment still yields a uniquely solvable linear system that preserves mass and nonnegativity without a time-step restriction; the discrete energy
inequality holds under an explicit restriction arising from the nonlocal interaction term. No nonlinear iteration is required in any of these cases.

The rest of the paper is organized as follows.
Section~\ref{sec:structure} reviews the variational structures of
Fokker--Planck equations and detailed-balance master equations, as well as their connections.
Section~\ref{sec:rates} derives the jump rates and identifies the schemes
associated with different reconstructions, and
Section~\ref{sec:temporal} discusses their temporal discretizations.
Section~\ref{sec:extensions} develops extensions to nonlinear mobilities,
nonlocal interactions, and higher-dimensional problems.
Section~\ref{sec:numerics} examines transient accuracy, equilibrium accuracy,
and structure preservation through five numerical experiments.

\section{Preliminaries}		\label{sec:structure}
In this section, we briefly review the variational structure of Fokker--Planck equations and master equations for discrete Markov chains, as well as their connections.

\subsection{Variational Structure of Fokker--Planck-type equations}

The energy-dissipation law introduced in the Introduction is not just a property of Fokker--Planck-type equations; together with the admissible set of $\rho$, denoted by $\mathcal{M}$, it in fact determines the evolution equation itself. 
More generally, a thermodynamically consistent model for a mechanically closed,
isothermal system can be specified by an energy-dissipation triple $(\mathcal M,\mathcal F,\triangle),$ together with a choice of variational variables and the associated
kinematics, under the framework of the energetic
variational approach \cite{giga2018variational, wang2022some}, a variational principle motivated by non-equilibrium thermodynamics \cite{onsager1931reciprocal, onsager1931reciprocal2}. Here, $\mathcal M$ specifies the admissible set of states. The kinematics relates the variational variables to the state variables and ensures that the resulting evolution remains in the admissible set $\mathcal M$.
$\mathcal F$ is the free-energy functional, and $\triangle\geq0$ is the rate of energy dissipation that is expressed in terms of the state and the rates of
the underlying variational variables \cite{giga2018variational}. The relation between $\mathcal{F}$ and $\triangle$ can be expressed as an energy-dissipation law
\begin{equation}\label{eq:abstract_edl}
    \frac{\dd}{\dd t}\mathcal F=-\triangle.
\end{equation}

The Fokker--Planck equation \eqref{FP_general} belongs to a broader
class of models, known as generalized diffusions \cite{giga2018variational}.  These are models that describe the evolution of a conserved quantity $\rho$. The admissible set of states can be written
formally as $\mathcal M = \{
        \rho\geq0:
        \int_\Omega\rho\,\dd\x= M
    \},$
where $M$ is the total mass. The variational variable is the
flow map $\x=\x(\X,t)$ that transports the initial density $\rho_0(\X)$ to the current density $\rho(\x,t)$, which is defined by
\begin{equation}\label{eq:flow_map}
    \frac{\dd}{\dd t}\x(\X,t)
    =
    \uvec(\x(\X,t),t),
    \qquad
    \x(\X,0)=\X,
\end{equation}
where $\uvec$ is a velocity field, and $\X$ and $\x$ denote the Lagrangian and Eulerian coordinates,
respectively. Conservation of mass gives the kinematic relation
\begin{equation}\label{eq:lagrangian_mass}
    \rho(\x(\X,t),t) =
    \rho_0(\X) / \det{\sf F}(\X,t),
    \qquad
    {\sf F}(\X,t)
    =
    \nabla_{\X}\x(\X,t)
    .
\end{equation}
Provided that the flow map is orientation preserving,
\eqref{eq:lagrangian_mass} preserves both the nonnegativity of $\rho$
and its total mass. So the constraints in $\mathcal{M}$ are encoded in \eqref{eq:lagrangian_mass}.
In Eulerian coordinates, (\ref{eq:lagrangian_mass}) can be formally
expressed as the continuity equation
\begin{equation}\label{eq:continuity_flux}
    \rho_t+\nabla\cdot(\rho\uvec)=0.
\end{equation}

After specifying the kinematics in \eqref{eq:continuity_flux}, the choice of the free energy $\mathcal F$ and of the dissipation rate $\triangle$ \emph{determines} the equation for the velocity $\uvec$, and hence the evolution of $\rho$.
A typical choice of the free energy and the rate of energy dissipation is
\begin{equation}\label{eq:nonlocal_energy}
\begin{aligned}
    \mathcal F[\rho]
    &=
    \int_\Omega
        \bigl(
            \omega(\rho)+\rho V(x)
        \bigr)
    \,\dd x      +
    \frac12
    \int_\Omega\int_\Omega
        K(x,x')\rho(x)\rho(x')
    \,\dd x\,\dd x',
\end{aligned}
\end{equation}
and
\begin{equation}\label{eq:physical_dissipation}
    \triangle(\rho,\uvec)
    =
    \int_\Omega
        \eta(\rho,x)|\uvec|^2
    \,\dd x,
\end{equation}
where $\omega(\rho)$ is a local free-energy density,
$K(x,x')=K(x',x)$ is a symmetric interaction kernel, and
$\eta(\rho,x)>0$ is a friction coefficient. The quadratic dependence
of $\triangle$ on the rate $\uvec$ corresponds to Darcy's law in physics. Starting from (\ref{eq:nonlocal_energy}) and (\ref{eq:physical_dissipation}), the energetic variational approach combines the least action principle
and the maximum dissipation principle
\cite{giga2018variational} to obtain the force balance equation, i.e., the equation for $\uvec$. In the overdamped setting,
the action is formally given by $\mathcal A[\x] = -\int_0^T\mathcal F[\rho]\,\dd t.$
Varying the action with respect to the flow map, subject to the
kinematic relation \eqref{eq:lagrangian_mass}, gives the conservative
force density
\begin{equation}\label{eq:conservative_force}
    \frac{\delta\mathcal A}{\delta\x}
    =
    -\rho\nabla\mu, \quad  \mu = \frac{\delta \mathcal F}{\delta \rho}.
\end{equation}
On the dissipative side, the physical dissipation rate
\eqref{eq:physical_dissipation} defines the quadratic dissipation
potential
\begin{equation}\label{eq:process_dissipation_potential}
    \Psi_\rho(\uvec)
    =
    \frac12\triangle(\rho,\uvec)
    =
    \frac12
    \int_\Omega
        \eta(\rho,x)|\uvec|^2
    \,\dd x.
\end{equation}
The maximum dissipation principle gives the dissipative force
\begin{equation}\label{eq:dissipative_force}
    \frac{\delta\Psi_\rho}{\delta\uvec}
    =
    \eta(\rho,x)\uvec.
\end{equation}
Finally, the force balance condition yields the generalized
Darcy's law
\begin{equation}\label{eq:force_balance}
    \eta(\rho,x)\uvec
    =
    -\rho\nabla\mu,
    \qquad
    \mu
    =
    \frac{\delta\mathcal F}{\delta\rho}.
\end{equation}
Combining \eqref{eq:force_balance} with the continuity equation gives a FP-type equation
\begin{equation}\label{eq:generalized_diffusion_from_envara}
    \rho_t
    =
    \nabla\cdot
    \left(
        m(\rho,x)\nabla\mu
    \right),
    \qquad
    m(\rho,x)
    =
    \frac{\rho^2}{\eta(\rho,x)}.
\end{equation}
Different choices of
$\mathcal F$ and $\triangle$ generate the broader class of generalized
diffusion equations represented by \eqref{FP_general}.

\begin{remark}
The dual dissipation potential, defined with respect to the force
density $\boldsymbol f$, is
\begin{equation}\label{eq:dual_process_dissipation}
\begin{aligned}
    \Psi_\rho^*(\boldsymbol f)
    &:=
    \sup_{\uvec}
    \left\{
        \int_\Omega
            \boldsymbol f\cdot\uvec
        \,\dd x
        -
        \Psi_\rho(\uvec)
    \right\} =
    \frac12
    \int_\Omega
        \frac{|\boldsymbol f|^2}{\eta(\rho,x)}
    \,\dd x.
\end{aligned}
\end{equation}
Along the force balance
$\boldsymbol f=-\rho\nabla\mu=\eta(\rho,x)\uvec$, the same physical
dissipation can therefore be evaluated in either the rate or force
variables:
\begin{equation}\label{eq:dissipation_primal_dual}
\begin{aligned}
    \triangle
    &=
    2\Psi_\rho(\uvec)
    =
    2\Psi_\rho^*(-\rho\nabla\mu) =
    \int_\Omega
        \eta(\rho,x)|\uvec|^2
    \,\dd x
    =
    \int_\Omega  m(\rho,x)
        |\nabla\mu|^2
    \,\dd x.
\end{aligned}
\end{equation}
\end{remark}

\begin{remark}
In the abstract formulation of generalized gradient systems \cite{mielke2023introduction}, a model is
specified by a triple $\bigl(\mathcal M,\mathcal F,\mathcal R\bigr),$
where the dissipation potential
$\mathcal R(\rho,\dot\rho)$ is defined directly on the tangent bundle of
the state space. The admissible kinematics is therefore absorbed into
the state-space dissipation potential through its dependence on the
state rate $\dot\rho$. The energetic variational approach uses a related but more explicit
process-space formulation. The dissipation potential is naturally
expressed in terms of the rate of an underlying variational variable.
\end{remark}

\begin{remark}
For the linear Fokker--Planck equation \eqref{FP_linear} with unit mass,
the equilibrium density is
\begin{equation}\label{eq:Gibbs}
    \rho^{\mathrm{eq}}(x)
    =
    Z^{-1}e^{-V(x)},
    \qquad
    Z
    =
    \int_\Omega e^{-V(x)}\,\dd x.
\end{equation}
In this case, the free energy \eqref{eq:intro-energy} is equivalent to the Kullback--Leibler (KL) divergence (or relative entropy) between $\rho$ and $\rho^{\mathrm{eq}}$,
\[
    {\rm KL}(\rho\|\rho^{\mathrm{eq}})
    :=
    \int_\Omega
        \rho
        \ln\left(
            \frac{\rho}{\rho^{\mathrm{eq}}}
        \right)
    \,\dd x,
\]
up to a constant. Similarly, the dissipation $\int_\Omega \rho |\nabla \mu|^2 \dd x$ is the relative Fisher information
\[
    \triangle
    =
    I(\rho\|\rho^{\mathrm{eq}})
    :=
    \int_\Omega
        \rho
        \left|
            \nabla
            \ln\left(
                \frac{\rho}{\rho^{\mathrm{eq}}}
            \right)
        \right|^2
    \,\dd x.
\]
\end{remark}

\subsection{Variational structure of Master Equations with detailed balance}\label{subsec:master_var}
While a FP-type equation describes evolution of a probability distribution on a continuous state space, a \emph{master equation} governs the evolution of a probability distribution ${\bm p} = (p_1, \ldots, p_N)$ on a finite state space $\mathcal{V} = \{1, \ldots, N\}$, with $\sum_i p_i = 1$, $p_i \geq 0$. Let $q_{ij}(t)$ denote the transition rate from state $i$ to state $j$. We assume $q_{ij}$ and $q_{ji}$ are either both positive or both zero, and form the finite graph $\mathcal{G} = (\mathcal{V}, \mathcal{E})$ with edge set $\mathcal{E} = \{(i,j) \mid q_{ij} > 0,\, q_{ji} > 0\}$; for $i \in \mathcal{V}$ we write $\mathcal{N}(i) := \{j \in \mathcal{V} \mid (i,j) \in \mathcal{E}\}$ for the neighbors of $i$.
We further assume irreducibility, i.e.\ $\mathcal{G}$ is connected.
The master equation reads
\begin{equation}\label{Master_eq}
\frac{\dd p_i(t)}{\dd t} = \sum_{j \in \mathcal{N}(i)} \left( q_{ji}(t) p_j(t) - q_{ij}(t) p_i(t) \right), \quad \forall i=1,2,\ldots,N.
\end{equation}
When the rates are constant, explicitly time-dependent, or dependent on $\bm{p}(t)$, the process is known as a time-homogeneous, time-inhomogeneous, or nonlinear Markov chain \cite{kolokoltsov2010nonlinear}, respectively. As with the Fokker--Planck equation, the master equation has a variational structure if the transition rates satisfy the {\bf detailed-balance condition}. Let ${\bm\pi} = (\pi_1, \ldots, \pi_N) > 0$ be a positive stationary distribution. We say that the transition rates satisfy the detailed-balance condition if
\begin{equation}\label{DB_1}
  q_{ij} \pi_i = q_{ji} \pi_j, \quad (i, j) \in \mathcal{E}.
\end{equation}

We define the free energy as the discrete $f$-divergence \cite{NEURIPS2020_NengWan} 
\begin{equation}\label{f_div}
\mathcal{F}[{\bm p}] = \sum_{i=1}^{N} f \left( \frac{p_i}{\pi_i} \right) \pi_i,
\end{equation}
where $f: [0,\infty) \to (-\infty, +\infty]$ is convex with $f(1) = 0$; this is enough for $\mathcal{F}$ itself to be well defined. For $f(x) = x\ln x$, this reduces to the KL divergence
\begin{equation}\label{KL_div}
\mathcal{F}[{\bm p}] = \sum_{i=1}^N \left( p_i \ln p_i - p_i \ln \pi_i \right).
\end{equation}
Since $\pi_i > 0$, there exists a potential $V_i$ such that $\pi_i = \frac{1}{Z} e^{-V_i}$, $Z = \sum_{i=1}^N e^{-V_i}$, where $V_i$ is the potential energy of state $i$. The KL divergence is therefore equivalent to the free energy
\begin{equation}\label{free_energy_master}
\mathcal{F}[{\bm p}] = \sum_{i=1}^N \left( p_i \ln p_i + p_i V_i \right)
\end{equation}
up to an additive constant. Under the detailed balance condition (\ref{DB_1}), one can show that the master equation satisfies an energy-dissipation law with respect to the free energy (\ref{free_energy_master}) \cite{schnakenberg1976network,ge2010physical, maas2011gradient, chow2012fokker, chow2019entropy, disser2015gradient}. More precisely, we have the following proposition.

\begin{proposition}\label{prop:master_EDL}
Let $\mathcal{G} = (\mathcal{V}, \mathcal{E})$ be a connected, undirected finite graph with vertex set $\mathcal{V} = \{1, \dots, N\}$. Let $\bm{p}(t) = (p_1(t), \dots, p_N(t))$ be a time-dependent probability distribution satisfying $p_i(t) > 0$ and $\sum_{i=1}^N p_i(t) = 1$ for all $t \geq 0$. Suppose that $\bm{p}(t)$ evolves according to the master equation (\ref{Master_eq}), and the transition rates satisfy the detailed balance condition (\ref{DB_1}). Then, for the $f$-divergence $\mathcal{F}[{\bm p}]$ defined in \eqref{f_div} with convex $f \in C^1(0,\infty)$, the system satisfies the energy-dissipation law
\begin{equation}\label{ED_master_f}
\frac{\dd}{\dd t} \mathcal{F}[{\bm p}]
= - \sum_{(i,j) \in \mathcal{E},\, i < j}
  \ell\!\left( \frac{p_i}{\pi_i}, \frac{p_j}{\pi_j} \right)
  \left( q_{ij}\, p_i - q_{ji}\, p_j \right) \ln \left( \frac{p_i q_{ij}}{p_j q_{ji}} \right) \leq 0,
\end{equation}
where 
\begin{equation}\label{eq:ell_def}
  \ell(x,y):=\frac{f'(x)-f'(y)}{\ln x-\ln y}\ \ (x\neq y)
  % \qquad \ell(x,x):= x f''(x),
\end{equation}
is nonnegative by the convexity of $f$. For the KL divergence $f(x)=x\ln x$ one has $\ell\equiv1$ and \eqref{ED_master_f} reduces to
\begin{equation}\label{ED_master}
\frac{\dd}{\dd t} \mathcal{F}[{\bm p}]
= - \sum_{(i,j) \in \mathcal{E},\, i < j} \left( q_{ij}\, p_i - q_{ji}\, p_j \right) \ln \left( \frac{p_i q_{ij}}{p_j q_{ji}} \right) \leq 0.
\end{equation}
\end{proposition}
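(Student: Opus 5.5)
We differentiate $\mathcal F$ along the master equation, reorganize the result as a sum over edges, and then use detailed balance to turn each edge term into the product appearing in \eqref{ED_master_f}. Since $p_i(t)>0$ and $f\in C^1(0,\infty)$, the chain rule gives
\begin{equation*}
\frac{\dd}{\dd t}\mathcal F[\bm p]=\sum_{i=1}^N f'\!\left(\frac{p_i}{\pi_i}\right)\dot p_i .
\end{equation*}
Write $J_{ij}:=q_{ij}p_i-q_{ji}p_j$ for the net flux from $i$ to $j$, so that $J_{ji}=-J_{ij}$. The master equation \eqref{Master_eq} then reads $\dot p_i=-\sum_{j\in\mathcal N(i)}J_{ij}$.

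Substituting and using the antisymmetry of $J$, each undirected edge is counted twice with opposite signs. Summation by parts on the graph therefore gives
\begin{equation*}
\frac{\dd}{\dd t}\mathcal F=-\sum_{(i,j)\in\mathcal E,\,i<j}\Bigl(f'(x_i)-f'(x_j)\Bigr)J_{ij},\qquad x_i:=\frac{p_i}{\pi_i}.
\end{equation*}

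Next we invoke detailed balance \eqref{DB_1}. Since $q_{ij}\pi_i=q_{ji}\pi_j$, we have
\begin{equation*}
\frac{p_iq_{ij}}{p_jq_{ji}}=\frac{p_i/\pi_i}{p_j/\pi_j}=\frac{x_i}{x_j},
\end{equation*}
so $\ln\bigl(p_iq_{ij}/(p_jq_{ji})\bigr)=\ln x_i-\ln x_j$. On edges with $x_i\neq x_j$, the definition \eqref{eq:ell_def} then gives $f'(x_i)-f'(x_j)=\ell(x_i,x_j)(\ln x_i-\ln x_j)$, which is exactly \eqref{ED_master_f}. On edges with $x_i=x_j$, both $f'(x_i)-f'(x_j)$ and $J_{ij}=q_{ij}\pi_i(x_i-x_j)$ vanish. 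The edge term is therefore zero, whatever finite value (for instance $x f''(x)$ when it exists) is assigned to $\ell(x,x)$. This degenerate case is the only delicate point, and it is purely notational.

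For the sign, we also rewrite the flux using detailed balance as $J_{ij}=q_{ij}\pi_i(x_i-x_j)$ with $q_{ij}\pi_i>0$. Hence each edge term equals
\begin{equation*}
q_{ij}\pi_i\,(x_i-x_j)\bigl(f'(x_i)-f'(x_j)\bigr)\ge0,
\end{equation*}
by monotonicity of $f'$, which follows from convexity. The same monotonicity gives $\ell\ge0$, because $\ln$ is increasing.

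For the KL case, $f'(x)=\ln x+1$, so $\ell\equiv1$ and \eqref{ED_master} follows immediately. Nonnegativity there is the familiar fact that $(a-b)(\ln a-\ln b)\ge0$ for $a,b>0$.
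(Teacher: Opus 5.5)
Your proof is correct and follows the paper's argument: you differentiate along the master equation, symmetrize over edges, use detailed balance to rewrite the log ratio as $\ln x_i-\ln x_j$, and dispose of the edges with $x_i=x_j$ by noting that both factors vanish. The only cosmetic difference is the sign step, where you use $J_{ij}=q_{ij}\pi_i(x_i-x_j)$ together with the monotonicity of $f'$, whereas the paper combines $\ell\ge0$ with $(a-b)\ln(a/b)\ge0$; both are valid.
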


\begin{proof}
Since $f \in C^1(0,\infty)$, convexity of $f$ implies $f'$ is non-decreasing. Differentiating $\mathcal{F}[\bm p] = \sum_{i=1}^N f(p_i/\pi_i)\pi_i$ in $t$ and substituting the master equation for $\dot p_i$ gives
\begin{equation}\label{ED_MC1}
  \begin{aligned}
    \frac{\dd}{\dd t} \mathcal{F}[{\bm p}] & = \sum_{i=1}^N f'\left( \frac{p_i}{\pi_i} \right) \dot p_i = \sum_{i=1}^N f'\left( \frac{p_i}{\pi_i} \right) \sum_{j \in \mathcal{N}(i)} \left( q_{ji} p_j  - q_{ij} p_i\right)  = \sum_{(i,j) \in \mathcal{E}} f'\left( \frac{p_i}{\pi_i} \right)   ( q_{ji} p_j - q_{ij} p_i ) \\
    & = \frac{1}{2} \sum_{(i,j) \in \mathcal{E}}  \left(  f'\left( \frac{p_i}{\pi_i} \right)  - f'\left( \frac{p_j}{\pi_j} \right)   \right)   ( q_{ji} p_j - q_{ij} p_i ),
   \end{aligned}
\end{equation}
where the second equality substitutes the master equation \eqref{Master_eq}, and the third reindexes the double sum via $\sum_{i=1}^N\sum_{j\in\mathcal{N}(i)}(\cdot) = \sum_{(i,j)\in\mathcal{E}}(\cdot)$. For the last equality, let $T$ denote the sum in the third expression; since $\mathcal{E}$ is symmetric under $(i,j)\mapsto(j,i)$, relabeling the summation index gives
\[
T = \sum_{(i,j)\in\mathcal{E}} f'\!\left(\frac{p_j}{\pi_j}\right)(q_{ij}p_i - q_{ji}p_j) = -\sum_{(i,j)\in\mathcal{E}} f'\!\left(\frac{p_j}{\pi_j}\right)(q_{ji}p_j - q_{ij}p_i);
\]
adding this identity to the original expression for $T$ and dividing by $2$ gives the stated form. By the detailed balance condition $q_{ij}\pi_i = q_{ji}\pi_j$ for $(i,j)\in\mathcal{E}$,
\[
\frac{p_i q_{ij}}{p_j q_{ji}} = \frac{p_i}{p_j}\cdot\frac{\pi_j}{\pi_i} = \frac{p_i/\pi_i}{p_j/\pi_j},
\qquad\text{so}\qquad
\ln\left(\frac{p_i q_{ij}}{p_j q_{ji}}\right) = \ln\left(\frac{p_i}{\pi_i}\right) - \ln\left(\frac{p_j}{\pi_j}\right).
\]
Recall the definition \eqref{eq:ell_def} of $\ell$. Since $f'$ is non-decreasing and $\ln$ is increasing, $f'(x)-f'(y)$ and $\ln x - \ln y$ have the same sign whenever $x \neq y$, so $\ell \geq 0$ everywhere. With $x = p_i/\pi_i$, $y = p_j/\pi_j$, the two displays above combine to give
\begin{equation}
f'\left( \frac{p_i}{\pi_i} \right)  - f'\left( \frac{p_j}{\pi_j} \right) = \ell\left(\frac{p_i}{\pi_i}, \frac{p_j}{\pi_j}\right) \ln \left( \frac{p_i q_{ij}}{p_j  q_{ji}}  \right),
\end{equation}
which also holds when $p_i/\pi_i = p_j/\pi_j$: by the log identity above, this case forces $q_{ij}p_i = q_{ji}p_j$, so both sides vanish regardless of the value assigned to $\ell(x,x)$. Substituting into \eqref{ED_MC1} yields
\begin{equation}\label{MC_entropy_production}
\begin{aligned}
\frac{\dd}{\dd t} \mathcal{F}[{\bm p}] &=  \frac{1}{2} \sum_{(i,j) \in \mathcal{E}}  \ell\left(\frac{p_i}{\pi_i}, \frac{p_j}{\pi_j}  \right) \ln \left( \frac{p_i q_{ij}}{p_j  q_{ji}}  \right) ( q_{ji} p_j - q_{ij} p_i ) \\
&=  - \frac{1}{2} \sum_{(i, j) \in \mathcal{E}}  \ell\left(\frac{p_i}{\pi_i}, \frac{p_j}{\pi_j}  \right) ( q_{ij} p_i - q_{ji} p_j )  \ln \left( \frac{q_{ij}p_i}{q_{ji} p_j} \right) \leq 0,
\end{aligned}
\end{equation}
where the inequality holds because $(a-b)\ln(a/b) \geq 0$ for all $a,b>0$ (as $a-b$ and $\ln a - \ln b$ share sign); taking $a = q_{ij}p_i$, $b = q_{ji}p_j$ shows each summand $-\ell\,(q_{ij}p_i - q_{ji}p_j)\ln(q_{ij}p_i/(q_{ji}p_j))$ is $\leq 0$, since $\ell \geq 0$. For the KL-divergence case, $f(x) = x\ln x$, we have $f'(x) = 1 + \ln x$, so $\ell \equiv 1$.
\end{proof}

\begin{remark}
  In addition to the KL divergence, another popular choice of $f$ is
  \[
    f(r)=\frac12(r-1)^2,
\]
which leads to the weighted quadratic entropy
\begin{equation}
    \mathcal{Q}[{\bm p}\mid{\bm \pi}]
    :=
    \frac12 \sum_{i=1}^N
    \frac{(p_i-\pi_i)^2}{\pi_i},
\end{equation}
i.e.\ the $\chi^2$-divergence of ${\bm p}$ relative to ${\bm \pi}$. With \( {\bm u}={\bm p}-{\bm \pi}\), the energy-dissipation law for \(\mathcal{Q}\) reads
\begin{equation}
    \frac{\dd}{\dd t}
    \frac12\sum_{i=1}^N \frac{u_i^2}{\pi_i}
    =
    -\frac12
    \sum_{(i,j)\in\mathcal E}
    q_{ij}\pi_i
    \left(
        \frac{u_i}{\pi_i}
        -
        \frac{u_j}{\pi_j}
    \right)^2
    \le 0,
\end{equation}
which can be viewed as a weighted $L^2$-energy for the master equation \cite{li2020large}.
%Thus the weighted quadratic entropy captures the local Hilbert-space dissipative structure of the logarithmic entropy.  This is the mechanism behind many weighted \(L^2\)-type stability estimates for detailed-balanceor \(B\)-scheme discretizations.  % It should, however, be distinguished from the full nonlinear entropy dissipation law: the latter additionally relies on positivity of \({\bm p}\) and the convexity of the logarithmic entropy.
\end{remark}

\subsection{Spatial discretization of a Fokker--Planck equation as a master equation}\label{subsec:trad_disc}

The Fokker--Planck equation and the master equation are closely connected~\cite{xing2005continuum}. On one hand, the FP equation can be viewed as the continuum limit of a master equation when the number of discrete states grows to infinity. On the other hand, a finite difference or finite volume discretization of the FP equation yields a system of ODEs that can be viewed as a master equation~\cite{wang2003robust}. 

To illustrate the idea, we consider the linear Fokker--Planck equation \eqref{FP_linear} with periodic boundary conditions and a periodic potential $V(x)$. We partition the domain $\Omega = (0, 1)$ into $N$ cells of size $h=1/N$ with grid points $x_i = ih$, $i=0,\ldots,N$, and let $\mathcal{I}^h$ denote the space of grid functions on $\{x_i\}_{i=0}^{N}$. We approximate $\rho(x, t)$ by a grid function on $\mathcal{I}^h$, i.e., $\rho_i(t)\approx\rho(x_i,t)$, and define the probability masses
\begin{equation}
p_i(t) = h\,\rho_i(t), \qquad i=0,1,\dots,N-1,
\end{equation}
which satisfy $\sum_i p_i \approx \int_0^1\rho\,\dd x = 1$. We can view $x_i$ ($i = 0, \ldots, N-1$) as $N$ discrete states, and $p_i(t)$ 
represents the probability of finding the particle at state $i$.

A finite difference discretization of \eqref{FP_linear} yields a system of ODEs for $\rho_i(t)$,
\begin{equation}\label{master_rho_sec3}
  \frac{\dd \rho_i}{\dd t} =  F_{i-1/2} \rho_{i-1} - B_{i-1/2} \rho_i - F_{i+1/2} \rho_i + B_{i+1/2} \rho_{i+1}, \quad i = 0, 1, \ldots, N-1,
\end{equation}
with $\rho_N = \rho_0$ and $\rho_{-1} = \rho_{N-1}$.  For example, the standard central difference scheme gives
\begin{equation}
 \frac{\dd}{\dd t}\rho_i = \frac{1}{h}\big(J_{i-\frac{1}{2}} - J_{i+\frac{1}{2}}\big),
 \qquad
 J_{i+\frac{1}{2}} = -\frac{\rho_i+\rho_{i+1}}{2}\frac{V_{i+1}-V_i}{h}
                     - \frac{\rho_{i+1}-\rho_i}{h},
\end{equation}
leading to
\begin{equation} \label{eqn: central difference rates}
F_{i+\frac{1}{2}}^{\mathrm{CD}} = \frac{1}{h^2}\Big(1-\frac{1}{2}\delta V_{i+\frac{1}{2}}\Big), \quad
 B_{i+\frac{1}{2}}^{\mathrm{CD}} = \frac{1}{h^2}\Big(1+\frac{1}{2}\delta V_{i+\frac{1}{2}}\Big),
\end{equation}
with $\delta V_{i+\frac{1}{2}} = V_{i+1}-V_i$. Multiplying (\ref{master_rho_sec3}) by $h$ gives 
\begin{equation}\label{master_p_sec3}
  \frac{\dd p_i}{\dd t} = F_{i-1/2} p_{i-1} - B_{i-1/2} p_i - F_{i+1/2} p_i + B_{i+1/2} p_{i+1}, \quad i = 0, 1, \ldots, N-1,
\end{equation}
with $p_N = p_0$ and $p_{-1} = p_{N-1}$. Equation \eqref{master_p_sec3} is precisely the master equation \eqref{Master_eq} on the cycle graph $\mathcal G_h=(\mathcal V_h,\mathcal E_h)$ with $\mathcal V_h=\{0,1,\ldots,N-1\}$ and nearest-neighbor edges $\mathcal E_h=\{(i,i+1\bmod N)\}$, with transition rates $q_{i,i+1}=F_{i+\frac12}$ and $q_{i+1,i}=B_{i+\frac12}$.

A desirable discretization of the Fokker--Planck equation should recover the correct stationary distribution. The stationary solution of \eqref{FP_linear} is $\rho^{\mathrm{eq}} \propto \exp(-V)$. Correspondingly, the semi-discrete master equation should converge to the discrete Gibbs state
\begin{equation}\label{eq:discrete_gibbs}
\rho_i^{\mathrm{eq}} = \frac{1}{Z_h}\,\exp(-V_i), \quad V_i = V(x_i),
\end{equation}
where $Z_h = h\sum_{j=0}^{N-1} e^{-V_j}$ is the discrete normalization constant. As pointed out in \cite{wang2003robust}, this holds if the jump rates satisfy the \emph{detailed balance condition}
\begin{equation}
\frac{F_{i+\frac12}}{B_{i+\frac12}} = \frac{\rho^{\mathrm{eq}}_{i+1}}{\rho^{\mathrm{eq}}_i}
= \exp\big(-(V_{i+1}-V_i)\big).
\end{equation}

\begin{remark}[Finite difference vs.\ finite volume]\label{rmk:FV-upwind}
Although the final schemes often look the same, many existing structure-preserving schemes for Fokker--Planck equations are based on a finite-volume viewpoint \cite{wang2003robust, heida2021consistency}.
In a finite-volume (FV) scheme, $\rho_i$ is the cell average
\begin{equation}
   \rho_i(t) = \frac{1}{h}\int_{x_i - h/2}^{x_i + h/2} \rho(x,t)\,\mathrm{d}x,
   \qquad p_i = h\,\rho_i,
\end{equation}
so mass conservation holds by construction, but the discrete equilibrium is complicated: physical consistency requires $\rho_i^{\mathrm{eq}}$ to be the cell average of the Gibbs measure,
\begin{equation}
 \rho_i^{\mathrm{eq}} = \frac{1}{h}\int_{I_i} \frac{1}{Z}\,\exp(-V(x))\,\mathrm{d}x.
\end{equation}
If one instead enforces the pointwise Boltzmann relation $\rho_i^{\mathrm{eq}} \propto \exp(-V_i)$, then $V_i$ can no longer equal $V(x_i)$ and must instead be the effective coarse-grained potential
\begin{equation}\label{eq:effective_V}
 V_i^{\mathrm{eff}} = -\ln \left( \frac{1}{h} \int_{I_i} \exp(-V(x)) \,\mathrm{d}x \right).
\end{equation}
This added complexity is why we work with finite differences throughout, taking $\rho_i \approx \rho(x_i, t)$ with the simple discrete equilibrium $\rho_i^{\mathrm{eq}} \propto \exp(-V_i)$. 
\end{remark}

The central difference rates \eqref{eqn: central difference rates} do not, in
general, satisfy the detailed-balance condition, since
\begin{equation}
    \frac{F_{i+\frac12}^{\rm CD}}{B_{i+\frac12}^{\rm CD}}
    =
    \frac{1-\frac12\delta V_{i+\frac12}}
         {1+\frac12\delta V_{i+\frac12}}
    \neq
    \exp\bigl(-\delta V_{i+\frac12}\bigr).
\end{equation}
Indeed, setting $w=\delta V_{i+\frac12}$, a Taylor expansion gives
\[
    \frac{1-\frac12w}{1+\frac12w}
    =
    e^{-w}\left(1-\frac{w^3}{12}+O(w^5)\right).
\]
For a smooth potential,
\[
    \delta V_{i+\frac12}
    =
    hV'(x_{i+\frac12})+O(h^3),
\]
and hence the discrepancy between the two rate ratios is $O(h^3)$. Thus,
although the central difference scheme is not exactly reversible at any fixed
mesh size, its rate ratio is asymptotically consistent with detailed balance
for smooth potentials. However, for discontinuous potentials, $\delta V_{i+1/2}$ can be $O(1)$ and the discrepancy does not vanish under mesh refinement. Consequently, the
central difference scheme may fail to recover the correct equilibrium.
Moreover, when $\lvert\delta V_{i+\frac12}\rvert>2$, one of the transition
rates becomes negative, and positivity of the numerical solution is no longer
guaranteed.

A similar calculation applies to the classical upwind scheme \cite{courant1952solution}. Its transition
rates are
\[
    F_{i+\frac12}^{\rm UP}
    =
    \frac{1}{h^2}
    \left[1+\bigl(-\delta V_{i+\frac12}\bigr)_+\right],
    \qquad
    B_{i+\frac12}^{\rm UP}
    =
    \frac{1}{h^2}
    \left[1+\bigl(\delta V_{i+\frac12}\bigr)_+\right]\ ,
\]
where $(x)_+ = \max(0, x)$.
Setting $w=\delta V_{i+\frac12}$, their ratio is
\[
    \frac{F_{i+\frac12}^{\rm UP}}
         {B_{i+\frac12}^{\rm UP}}
    =
    \begin{cases}
        (1+w)^{-1}, & w\geq 0,\\
        1-w,        & w<0.
    \end{cases}
\]
A piecewise Taylor expansion yields
\[
    \frac{F_{i+\frac12}^{\rm UP}}
         {B_{i+\frac12}^{\rm UP}}
    =
    e^{-w}
    \left(1+\frac12 w|w|+O(|w|^3)\right).
\]
Hence, for a smooth potential, the upwind rate ratio approximates the
detailed-balance ratio with an $O(h^2)$ error. Across a discontinuity
of $V$, however, $w$ can remain $O(1)$, and the discrepancy does not
vanish under mesh refinement. Unlike the central difference rates,
the upwind rates remain positive for arbitrary $w$.

To construct a scheme that satisfies the detailed balance condition, 
in \cite{wang2003robust}, the authors constructed the transition rates from a local stationary Fokker--Planck
problem posed on each pair of neighboring cells from a finite-volume viewpoint. Consider
\(x_{i+\frac12}\), translate it to the origin, and set
\(w=\delta V_{i+\frac12}=V_{i+1}-V_i\), so that the potential gradient is approximated
by the constant \(w/h\) on the two adjacent cells. 
The goal is to reconstruct a local stationary density
\(\widetilde{\rho}\) such that the flux $J:=-\left(\frac{\dd\widetilde{\rho}}{\dd x}+\frac{w}{h}\widetilde{\rho}\right)$ is constant on \((-h,h)\). This leads to the stationary problem
\begin{equation}\label{eq:WPE_local_problem}
\frac{\dd}{\dd x}
\left(
\frac{\dd\widetilde{\rho}}{\dd x}
+\frac{w}{h}\widetilde{\rho}
\right)
=0,
\qquad x\in(-h,h),
\end{equation}
subject to the two mass constraints
\begin{equation}\label{eq:WPE_mass_constraints}
\int_{-h}^{0}\widetilde{\rho}(x)\,\dd x=h\rho_i,
\qquad
\int_{0}^{h}\widetilde{\rho}(x)\,\dd x=h\rho_{i+1}.
\end{equation}
A general solution of (\ref{eq:WPE_local_problem}) is given by 
\(\widetilde{\rho}(x)=c_1e^{-wx/h}+c_2\), where the two constants are determined by \eqref{eq:WPE_mass_constraints}.
Since \(J=-\tfrac{w}{h}c_2\), only \(c_2\) is needed, and
eliminating it gives the flux
\begin{equation}\label{eq:WPE_flux}
J_{i+\frac12}^{\rm WPE}
=
\frac{1}{h}
\left[
\psi_{\mathrm{Bern}}(w)\,\rho_i
-
\psi_{\mathrm{Bern}}(-w)\,\rho_{i+1}
\right],
\qquad
\psi_{\mathrm{Bern}}(w)
:=
\frac{w}{e^w-1},
\quad
\psi_{\mathrm{Bern}}(0):=1.
\end{equation}
Substituting \eqref{eq:WPE_flux} into the discrete conservation law
\(\dd\rho_i/\dd t=(J_{i-\frac12}-J_{i+\frac12})/h\) and reading off the coefficients of
\(\rho_{i\pm1}\) gives the transition rates
\begin{equation}
F_{i+\frac12}^{\rm WPE}
=
\frac{1}{h^2}\,\psi_{\mathrm{Bern}}(\delta V_{i+\frac12})
=
\frac{1}{h^2}
\frac{\delta V_{i+\frac12}}
{e^{\delta V_{i+\frac12}}-1},
\qquad
B_{i+\frac12}^{\rm WPE}
=
\frac{1}{h^2}\,\psi_{\mathrm{Bern}}(-\delta V_{i+\frac12})
=
\frac{1}{h^2}
\frac{\delta V_{i+\frac12}}
{1-e^{-\delta V_{i+\frac12}}}.
\end{equation}
It is easy to verify that these rates satisfy the detailed balance condition at every mesh size and for arbitrarily large
\(\delta V_{i+\frac12}\). Moreover, both rates are positive, since \(\psi_{\mathrm{Bern}}>0\) on
\(\mathbb{R}\).
The exponential profile of the local stationary problem explains the term
exponential fitting. In the linear constant-diffusion setting
considered here, the same Bernoulli-function flux has
appeared, with slightly different derivations, as the Il'in--Allen--Southwell scheme for
convection--diffusion equations
\cite{AllenSouthwell1955,Iln1969}, the Scharfetter--Gummel scheme for
semiconductor transport \cite{SG1969}, and the Chang--Cooper scheme in kinetic theory \cite{CHANG1970}.

An alternative derivation, given in \cite{wang2007convergence}, uses the substitution $\rho(x, t) = g(x, t) \exp(-V(x))$, which transforms \eqref{FP_linear} into a weighted diffusion equation for $g$:
\begin{equation}
 \exp(-V(x))\frac{\partial g}{\partial t} = \nabla \cdot \left( \exp\left(-V(x) \right) \nabla g \right).
\end{equation}
Discretizing this equation directly yields the jump rates
\begin{equation}
F_{i+\frac12} =  \frac{1}{h^2}\exp(- V_{i+\frac12})\exp(V_{i}),
\qquad
B_{i+\frac12} = \frac{1}{h^2}\exp(- V_{i+\frac12}) \exp(V_{i+1}),
\end{equation}
where $V_{i+\frac12}$ is a suitable average of the potential at the two neighboring grid points, e.g.\ $V_{i+\frac12} = (V_i + V_{i+1})/2$. Different choices of the average lead to different schemes \cite{heida2021consistency}. The same
reformulation, often referred to as the Slotboom, or non-logarithmic Landau, transform, is used to construct positivity-preserving
and free-energy-dissipating schemes for nonlinear and nonlocal diffusion
equations \cite{liu2018positivity, liu2020positive, ding2023convergence}.

\section{Variational Construction of Jump Rates}
\label{sec:rates}

In this section, we propose a variational framework for
constructing structure-preserving master equation discretizations of Fokker--Planck equations. Rather than discretizing the differential
operator in the governing PDE, we project the variational
structure of the Fokker--Planck equation onto a finite-dimensional grid function
representation. The projected kinematics gives a conservative
semi-discrete evolution, which can be viewed as a master equation for a Markov jump process.
The projected energy--dissipation law
determines the \emph{net flux} between two states. The detailed-balance condition, which ensures consistency with the correct equilibrium distribution, determines the \emph{ratio} of the forward and backward jump rates along each edge. Together, these two conditions determine the jump rates. Because the finite-dimensional representation contains only values at the grid points, the required edge quantities are obtained through local reconstructions from neighboring grid values, with different reconstruction
choices leading to different schemes.

To illustrate the idea, in this section, we focus on the one-dimensional linear Fokker--Planck equation
\begin{equation}\label{FP_se3}
 \pp_t \rho  = \nabla \cdot (\nabla \rho + \rho \nabla V(x))
\end{equation}
for $x \in (0, 1)$, subject to periodic boundary conditions, where $V(x)$ is a periodic function. As shown previously, \eqref{FP_se3} satisfies the energy-dissipation law \eqref{ED_mu_1}. The framework developed here is general and extends naturally to no-flux boundary conditions, higher-dimensional settings, and more complex physical models; some of these extensions are discussed in Section~\ref{sec:extensions}.

Following the setting of Section~\ref{subsec:trad_disc}, we begin by defining the discrete free energy functional $\mathcal{F}_h[\bm{\rho}]$ of the grid function ${\bm \rho}$, which is a direct quadrature approximation of its continuous counterpart:
\begin{equation}\label{eq:discrete_F}
  \mathcal{F}_h[\bm{\rho}] = h \sum_{i=0}^{N-1} \left( \rho_i \ln \rho_i - \rho_i + \rho_i V_i \right), \qquad V_i = V(x_i).
\end{equation}
If $V$ is smooth and $\rho_i(t)$ is a second-order approximation of $\rho(x_i,t)$, then $\mathcal{F}_h$ is a second-order quadrature approximation of the continuous free energy $\mathcal{F}$.

For the dissipation part, we first approximate $\nabla \mu$ by the discrete gradient on the staggered grid  as
\begin{equation}
  \nabla_h \mu_{i + 1/2} = \frac{\mu_{i+1} - \mu_i}{h},
  \qquad \mu_i := \ln \rho_i + V_i,
\end{equation}
which is a second-order approximation of the continuous gradient $\nabla \mu(x_{i+1/2})$ under the smoothness assumptions on $\rho$ and $V$.
Then the continuous dissipation is approximated by the discrete dissipation
\begin{equation}\label{ap_Diss}
   \triangle_h[\bm{\rho}] = h \sum_{i=0}^{N-1} \widehat{\rho}_{i + 1/2} \, \big| \nabla_h \mu_{i + 1/2} \big|^2,
\end{equation}
where $\widehat{\rho}_{i+1/2}$ is an approximation of $\rho(x_{i+1/2})$, reconstructed from $\bm{\rho}$. To ensure that \(\triangle_h[\bm{\rho}]\) is also a second-order approximation of the continuous dissipation $\int_0^1 \rho\,|\nabla \mu|^2\,\dd x$, we require
\(
\widehat{\rho}_{i+1/2}=\rho(x_{i+1/2})+O(h^2).
\)

\begin{remark}
In this finite-difference setting, the discretizations of the free energy and dissipation are straightforward, because $\rho_i$ is a
point value: $\mathcal{F}_h$ is a quadrature rule applied to nodal values, $\mu_i=\ln\rho_i+V_i$ is
the nodal chemical potential, and $\triangle_h$ combines a midpoint rule with a finite difference approximation. The only quantity that has to be reconstructed is the interfacial density
$\widehat{\rho}_{i+1/2}$. A finite-volume viewpoint is less direct, since $\rho_i$ is a cell average
while the energy density is nonlinear: in general
\begin{equation}\label{eq:FV_jensen_gap}
  \rho_i\ln\rho_i-\rho_i
  \;\neq\;
  \frac1h\int_{I_i}\bigl(\rho\ln\rho-\rho\bigr)\,\dd x ,
\end{equation}
and $\ln\rho_i+V(x_i)$ is not the cell average of $\mu$. Evaluating $\mathcal{F}_h$ and
$\triangle_h$ then requires reconstructing a pointwise density inside each cell first, and
consistency of the discrete equilibrium additionally forces $V(x_i)$ to be replaced by the
coarse-grained potential $V_i^{\mathrm{eff}}$ of \eqref{eq:effective_V}
(Remark~\ref{rmk:FV-upwind}). All of these discrepancies are $O(h^2)$ for smooth data, so the two
viewpoints are equivalent to the order of accuracy considered here. For simplicity, we work with finite differences
so that $\widehat{\rho}_{i+1/2}$ remains the only reconstruction in the construction.
\end{remark}

The main idea is to choose the jump rates $F_{i+1/2}$ and $B_{i+1/2}$ in \eqref{master_rho_sec3} such that the time derivative of the discrete free
energy along the master equation \eqref{master_rho_sec3} matches the discrete
dissipation \eqref{ap_Diss}. Differentiating $\mathcal{F}_h$ and applying a discrete
summation by parts (shifting the index in the backward-flux sum by periodicity)
gives
\begin{equation}\label{dFdt_master}
\frac{\dd \mathcal{F}_h}{\dd t}
= h \sum_{i=0}^{N-1}
\big(\mu_{i+1}-\mu_i\big)\,
\big(F_{i+\frac12}\rho_{i}-B_{i+\frac12}\rho_{i+1}\big).
\end{equation}
Requiring
\eqref{dFdt_master} to equal $-\triangle_h$ \emph{edge by edge} yields
\begin{equation}\label{cond1}
\boxed{
F_{i + \frac12}\rho_{i}-B_{i+\frac12}\rho_{i+1}
= -\frac{1}{h^2}\,\widehat{\rho}_{i+\frac12}
\big(\mu_{i+1}-\mu_i\big).}
\end{equation}

To determine the forward and backward rates separately, an additional structural condition is needed. The natural candidate is the \emph{detailed balance condition},
\begin{equation}\label{cond2}
  \boxed{
F_{i+\frac12}\,e^{-V_{i}} = B_{i+\frac12}\,e^{-V_{i+1}}\ ,}
\end{equation}
which encodes the equilibrium information of the Gibbs state and fixes the \emph{ratio} between $F_{i+1/2}$ and $B_{i+1/2}$. This specifies how the net flux is split into forward and backward components.
The detailed balance condition \eqref{cond2} is equivalent to the existence of a scalar coefficient \(\gamma_{i+\frac12}(\bm{\rho})\) such that
\begin{equation}\label{cond2-2}
F_{i+1/2} = \frac{1}{h^2}\gamma_{i+1/2} (\bm{\rho}) \, e^{V_i}, \qquad
B_{i+1/2} = \frac{1}{h^2}\gamma_{i+1/2} (\bm{\rho}) \, e^{V_{i+1}}.
\end{equation}
Substituting the expressions from \eqref{cond2-2} into the consistency condition \eqref{cond1}, we obtain an explicit formula for \(\gamma_{i+\frac12}(\bm{\rho})\):
\begin{equation}\label{exact_interfaical_gamma}
\gamma_{i+ \frac12}^{\rm exact}(\bm{\rho})
= \widehat{\rho}_{i+\frac12}\,
\frac{\mu_{i+1}-\mu_i}
{e^{V_{i+1}}\rho_{i+1} - e^{V_{i}}\rho_{i}} = \widehat{\rho}_{i+\frac12}\,
\frac{\mu_{i+1}-\mu_i}
{e^{\mu_{i+1}}-e^{\mu_i}}, \quad i = 0, \ldots, N - 1.
\end{equation}
Equation \eqref{exact_interfaical_gamma} determines the edge coefficient exactly, but the resulting rates depend on the density. For the linear Fokker--Planck equation considered here, it is desirable to construct rates that depend only on the prescribed potential, so that the resulting master equation remains linear and its generator can be fixed independently of the evolving density. For a smooth positive density, we observe that
\begin{equation}
 \frac{\mu(x_{i+1})-\mu (x_i)}
{e^{\mu(x_{i+1})}-e^{\mu(x_i)}} = \frac{1}{e^{\mu (x_{i+1/2})}} + O(h^2) = \frac{1}{\rho(x_{i+1/2})} e^{-V(x_{i+1/2})} + O(h^2),
\end{equation}
Since $\widehat{\rho}_{i+1/2}$ approximates
$\rho(x_{i+1/2})$, this observation suggests replacing the exact
density-dependent coefficient in \eqref{exact_interfaical_gamma} by
the density-independent approximation
\begin{equation}\label{approx_interfaical_gamma}
 \gamma_{i+\frac12}^{\rm exact}(\bm{\rho})
 \approx e^{-V_{i+1/2}}
\end{equation}
where $V_{i+1/2}$ is a suitable approximation of $V(x_{i+1/2})$, obtained from a suitable local reconstruction of the potential. Accordingly, the resulting
density-independent rates are
\begin{equation}\label{rate_with_V_half}
 F_{i+1/2}
 =
 \frac{1}{h^2}e^{-(V_{i+1/2}-V_i)},
 \qquad
 B_{i+1/2}
 =
 \frac{1}{h^2}e^{-(V_{i+1/2}-V_{i+1})}.
\end{equation}
Different reconstructions of $V_{i+1/2}$ lead to different practical schemes, as discussed in the next subsection. The approximation \eqref{approx_interfaical_gamma} will then be justified through a consistency analysis.

\begin{remark}
For a more general free energy
\begin{equation}
 \mathcal{F}[\rho] = \int_\Omega \Bigl( \omega(\rho) + \rho V(x) + \tfrac{1}{2} (K * \rho)\, \rho \Bigr) \dd x,
\end{equation}
the same derivation gives
\begin{equation}\label{flux_consistent}
F_{i + \frac12}\rho_{i}-B_{i+\frac12}\rho_{i+1}
= -\frac{1}{h^2} \widehat{\rho}_{i+1/2}
\big(\mu_{i+1} - \mu_i \big).
\end{equation}
Thus, as in the linear Fokker--Planck case, the variational structure determines the net flux across each interface. However, this condition alone does not uniquely determine the two jump rates
\(F_{i+1/2}\) and \(B_{i+1/2}\). One may choose the forward and backward jump rates \(F_{i+1/2}\) and \(B_{i+1/2}\) in many different ways so long as the above identity is satisfied. For example, in \cite{carrillo2015finite, chow2019entropy, BailoCarrilloHu2020}, an upwind-type splitting is used:
\begin{equation}
F_{i+\frac12}({\bm \rho}) = \frac{1}{h^2}\,(\mu_i-\mu_{i+1})_+,
\qquad
B_{i+\frac12}({\bm \rho}) = \frac{1}{h^2}\,(\mu_{i+1}-\mu_i)_+,
\end{equation}
where \((a)_+ = \max\{a,0\}\). If the interface density is chosen by upwinding,
\begin{equation}
\widehat{\rho}_{i+\frac12} =
\begin{cases}
\rho_i, & \mu_i \ge \mu_{i+1},\\
\rho_{i+1}, & \mu_{i+1} > \mu_i,
\end{cases}
\end{equation}
then this splitting is consistent with \eqref{flux_consistent}. Higher-order spatial accuracy can be obtained by reconstructing nonnegative left and right edge values $\rho_{i+\frac12}^{-}$ and $\rho_{i+\frac12}^{+}$ from a wider stencil and selecting the upstream value according to the sign of $\mu_i-\mu_{i+1}$ \cite{BailoCarrilloHu2020}.

This nonlinear upwind splitting preserves equilibria through the
vanishing of the discrete thermodynamic force. Indeed, a positive
discrete equilibrium is characterized by $\mu_i^{\rm eq}=\lambda$
for all $i$, where $\lambda$ is the Lagrange multiplier associated
with mass conservation. Consequently,
\[
F_{i+\frac12}( \bm \rho^{\rm eq})
=
B_{i+\frac12}(\bm \rho^{\rm eq})=0,
\]
and every interface flux vanishes. In the linear Fokker--Planck case,
$\mu_i=\ln\rho_i+V_i$, so this condition gives the discrete Gibbs
state
\[
\rho_i^{\rm eq}=C_h\exp(-V_i).
\]
This mechanism differs from detailed balance for
density-independent transition rates. In a reversible linear master
equation, the forward and backward fluxes are generally nonzero at
equilibrium but cancel pairwise. For the nonlinear upwind splitting,
both density-dependent rates vanish when the chemical-potential
difference vanishes. Thus, preservation of the Gibbs state follows
from discrete force balance rather than from a prescribed ratio of
forward and backward rates.

\end{remark}

A related generalization replaces the entropy $\rho\ln\rho$ by a general convex function of the density ratio against a prescribed equilibrium $\pi$, leading to $f$-divergence energies.

\begin{remark}
The same derivation applies to $f$-divergence energies, where
\begin{equation}
    \frac{\dd}{\dd t} \int_{0}^1 f \left(\frac{\rho(x)}{\pi(x)} \right) \pi(x) \dd x = - \int_0^1 \rho |\nabla \mu|^2 \dd x,
    \qquad \mu:=f'\!\left(\frac{\rho}{\pi}\right) .
\end{equation}
Here the free energy is the $f$-divergence between $\rho$ and the known positive equilibrium $\pi(x)$.
The corresponding generalizations of flux consistency \eqref{cond1} and the detailed balance condition \eqref{cond2} are
\begin{equation}\label{cond1_1}
F_{i + 1/2}\rho_{i}-B_{i+1/2}\rho_{i+1}
= -\frac{1}{h^2} \widehat\rho_{i+1/2} \left(f'\left( \frac{\rho_{i+1}}{\pi_{i+1}}\right) - f'\left(\frac{\rho_i}{\pi_i}\right) \right),
\end{equation}
and
\begin{equation}\label{cond2_1-f}
F_{i+\frac12} \pi_i = B_{i+\frac12} \pi_{i+1}.
\end{equation}
This implies that there exists \(\gamma_{i+1/2}(\bm{\rho})\) such that
\begin{equation}\label{cond2-2-f}
F_{i+1/2} = \frac{1}{h^2}\frac{\gamma_{i+1/2}}{\pi_i}, \qquad
B_{i+1/2} =  \frac{1}{h^2}\frac{\gamma_{i+1/2}} {\pi_{i+1}}.
\end{equation}
Thus, we can derive that
\begin{equation}
  \gamma_{i+1/2} =  \widehat\rho_{i+1/2} \frac{f'\bigl( \frac{\rho_{i+1}}{\pi_{i+1}}\bigr) - f'\bigl(\frac{\rho_i}{\pi_i}\bigr)}{ \frac{\rho_{i+1}}{\pi_{i+1}} - \frac{\rho_i}{\pi_i}} \approx  \widehat\rho_{i+1/2}\, f''\left( \frac{\rho_{i+1/2}}{\pi_{i+1/2}}\right) ,
\end{equation}
 where $\pi_{i+1/2}$ is a suitable approximation of $\pi(x_{i+1/2})$.
\end{remark}

\subsection{Representative reconstructions}
\label{subsec:reconstructions}

To construct a practical scheme, we need to determine how to reconstruct $V_{i+1/2}$, $e^{-V_{i+1/2}}$, or $e^{V_{i+1/2}}$ from the nodal values $V_i,V_{i+1}$. Different second-order reconstructions recover classical exponential-fitting schemes and produce additional detailed-balance rates. Throughout this subsection, let $\delta V_{i+1/2}=V_{i+1}-V_i$.

Some typical second-order reconstructions, including those underlying several well-known schemes in the literature, are summarized below.  

\begin{itemize}

\item \textbf{Elston--Doering (ED) rates.} The midpoint approximation $V_{i+1/2}\approx(V_i+V_{i+1})/2$ gives the symmetric rates used in \cite{elston1996}:
\begin{equation}
F_{i+1/2}^{\text{ED}} = \frac{1}{h^2} e^{-\frac{\delta V_{i+1/2}}{2}}, \quad B_{i+1/2}^{\text{ED}} = \frac{1}{h^2} e^{\frac{\delta V_{i+1/2}}{2}}.
\end{equation}
Such a reconstruction is equivalent to applying the geometric mean, also known as the square-root approximation (SQRA) \cite{donati2021markov, heida2021consistency}, to $e^{-V}$, i.e., 
\begin{equation}
e^{-V_{i+1/2}} \approx \gamma_{i+1/2}^{\rm ED} :=  G(e^{-V_i}, e^{-V_{i+1}}) = \sqrt{e^{-V_i} e^{-V_{i+1}}}.
\end{equation}

\item \textbf{SG/WPE rates.} The classical Scharfetter--Gummel \cite{SG1969} or Wang--Peskin--Elston \cite{wang2003robust} rate can be recovered by applying the logarithmic mean to $e^{V_{i+1/2}}$:
\begin{equation}
e^{V_{i+1/2}} \approx L(e^{V_{i}}, e^{V_{i+1}}) = \begin{cases}
     \dfrac{e^{V_{i+1}} - e^{V_i}}{V_{i+1} - V_i}, &  V_{i+1} \neq V_{i}, \\[2mm]
     e^{V_{i}}, & V_{i+1} = V_i,
 \end{cases}
 \end{equation}
and therefore
\begin{equation}
\gamma^{\text{SG}}_{i +1/2} =  \frac{V_{i+1}-V_i}{e^{V_{i+1}} - e^{V_i}} = \frac{V_{i+1}-V_i}{e^{V_{i+1}}(1 - e^{-(V_{i+1}-V_i)})}, \quad V_{i+1} \neq V_i.
\end{equation}
Hence
\begin{equation}
\begin{aligned}
F_{i+1/2}^{\rm SG} =\frac{1}{h^2}\,\frac{\delta V_{i+1/2}}{e^{\delta V_{i+1/2}}-1}, ~~B_{i+1/2}^{\rm SG}  = \frac{1}{h^2}\,\frac{\delta V_{i+1/2}}{1-e^{-\delta V_{i+1/2}}}\ .
\end{aligned}
\end{equation}

\item  \textbf{Improved WPE (IWPE) rates.} The harmonic mean reconstruction
\begin{equation}
e^{-V_{i+1/2}} \approx \gamma_{i+1/2}^{\rm IWPE} := H(e^{-V_i}, e^{-V_{i+1}}) =  \frac{2}{e^{V_i} + e^{V_{i+1}}}
\end{equation}
leads to the rates proposed in \cite{wang2008convergence}:
\begin{equation}
F_{i+1/2}^{\rm IWPE}=\frac{2}{h^2}\,\frac{1}{1+e^{\delta V_{i+1/2}}},
\qquad
B_{i+1/2}^{\rm IWPE}=\frac{2}{h^2}\,\frac{1}{1+e^{-\delta V_{i+1/2}}}.
\end{equation}
The IWPE rate is well suited to discontinuous potentials \cite{wang2007convergence} (see the numerical experiments in Section~\ref{subsec:exp2}). 

  \item \textbf{Arithmetic mean of $e^{-V}$ (AM-$e^{-V}$).} 
   Use the arithmetic mean for $e^{-V_{i+1/2}}$:
   \begin{equation}
   e^{-V_{i+1/2}} \approx \gamma_{i+1/2}^{\rm AM} :=  A(e^{-V_i}, e^{-V_{i+1}}) = \frac{e^{-V_i} + e^{-V_{i+1}}}{2}.
   \end{equation}
    This gives
   \begin{equation}\label{eq:TypeI_rates}
   F_{i+1/2}^{\text{AM}} = \frac{1}{h^2} \frac{1 + e^{-\delta V_{i+1/2}}}{2}, \quad B_{i+1/2}^{\text{AM}} = \frac{1}{h^2} \frac{e^{\delta V_{i+1/2}} + 1}{2}.\end{equation}

\item  \textbf{Logarithmic mean of $e^{-V}$ (LM-$e^{-V}$).}
Apply the logarithmic mean to $e^{-V_{i+1/2}}$ rather than to $e^{V_{i+1/2}}$:
\begin{equation}
e^{-V_{i+1/2}} \approx \gamma_{i+1/2}^{\rm LM} := L(e^{-V_i}, e^{-V_{i+1}}) = - \frac{e^{-V_{i+1}} - e^{-V_i}}{V_{i+1} - V_i}.
\end{equation}
Then
\begin{equation}\label{eq:TypeII_rates}
    F_{i+1/2}^{\text{LM}} = \frac{1}{h^2} \frac{1 - e^{-\delta V_{i+1/2}}}{\delta V_{i+1/2}}, \quad B_{i+1/2}^{\text{LM}} = \frac{1}{h^2} \frac{e^{\delta V_{i+1/2}} - 1}{\delta V_{i+1/2}}.
\end{equation}
% Again, the $0/0$ cancellation at $\delta V_{i+1/2}=0$ is removable and should be evaluated stably.

\end{itemize}

The removable singularities in the SG/WPE and LM-$e^{-V}$ formulas at
$\delta V_{i+\frac12}=0$ are understood by continuity and should be
evaluated using numerically stable implementations.

\begin{remark}
All rates above can be written in the form
\begin{equation}\label{eq:psi_rates}
    F_{i+\frac12}
    =
    \frac{1}{h^2}\psi(\delta V_{i+\frac12}),
    \qquad
    B_{i+\frac12}
    =
    \frac{1}{h^2}\psi(-\delta V_{i+\frac12}),
\end{equation}
where \(\psi:\mathbb{R}\to(0,\infty)\) and \(\psi(0)=1\). This is closely
related to the \(B\)-scheme notation used in the finite-volume literature
\cite{Hillairet2011,li2020large}. Since the discrete Gibbs state satisfies
\(\rho^{\mathrm{eq}}_{i+1}/\rho^{\mathrm{eq}}_i=e^{-\delta V_{i+\frac12}}\), the detailed-balance condition $F_{i+\frac12}\rho^{\mathrm{eq}}_i
    =
    B_{i+\frac12}\rho^{\mathrm{eq}}_{i+1}$
is equivalent to
\begin{equation}\label{eq:multiplicative_symmetry}
    \ln\psi(-w)-\ln\psi(w)=w.
\end{equation}
This multiplicative symmetry is used in the modified \(B\)-scheme and
Stolarsky-mean formulations of
\cite{heida2021consistency,li2025discrete}. It guarantees that the sampled
Gibbs state is an exact stationary state of the discrete system. By contrast, the classical \(B\)-scheme framework imposes the additive
condition
\begin{equation}\label{eq:additive_B_symmetry}
    \psi(w)-\psi(-w)=-w,
\end{equation}
which is used to recover the correct drift contribution in the finite-volume flux \cite{Hillairet2011,li2020large}.  The two conditions
therefore encode different properties: \eqref{eq:multiplicative_symmetry}
fixes the discrete equilibrium through detailed balance, whereas
\eqref{eq:additive_B_symmetry} is a consistency condition inherited from the
drift--diffusion flux. Requiring both conditions uniquely determines
\[
    \psi(w)=\frac{w}{e^w-1},
\]
namely, the Bernoulli function associated with the SG/WPE scheme. The SG/WPE flux reduces to the upwind flux in the vanishing-diffusion limit \cite{hraivoronska2023}.

Several analytical results are already available for these related classes
of schemes. Convergence of classical \(B\)-schemes for stationary
finite-volume problems was established in \cite{Hillairet2011}. For
one-dimensional Fokker--Planck equations on the full line,
\cite{li2020large} interpreted \(B\)-schemes as jump
processes and proved a discrete Poincar\'e inequality, exponential
convergence to the discrete stationary state, and an \(O(h)\)
uniform-in-time error estimate under suitable confinement and regularity
assumptions. Consistency and energy-norm convergence estimates for the
Stolarsky-mean family, which includes the five reconstructions considered
here, were developed in \cite{heida2021consistency}. More recently,
\cite{li2025discrete} established discrete Poincar\'e inequalities, with
constants uniform in \(h\), and exponential convergence to the exact
discrete Gibbs state for modified \(B\)-schemes satisfying
\eqref{eq:multiplicative_symmetry}. These results provide convergence and
long-time analyses for substantial parts of the linear fixed-potential
setting. 
\end{remark}

\subsection{Variational Consistency}\label{subsec:stability_consistency}

In this subsection, we formally justify the density-independent approximation \eqref{approx_interfaical_gamma} of the exact coefficient \eqref{exact_interfaical_gamma} and analyze the five proposed reconstructions by establishing the consistency of the discrete energy and dissipation functionals with their continuous counterparts.

When the rates are defined through \eqref{cond2-2}, the exact coefficient
$\gamma_{i+\frac12}^{\rm exact}$ and each reconstructed coefficient
$\gamma_{i+\frac12}^{\rm rec}$, with ${\rm rec}\in
\bigl\{\text{\rm ED},\ \text{\rm SG/WPE},\ \text{\rm IWPE},
\ \text{\rm AM},\ \text{\rm LM}\bigr\},$
give rise to rates satisfying the detailed-balance condition \eqref{cond2}.
Consequently, each associated master equation \eqref{master_rho_sec3}
satisfies the energy--dissipation law derived in
Section~\ref{subsec:master_var}.

\begin{proposition}
\label{prop:EDD_consistency}

Assume periodic indexing, and let the rates be defined by the
detailed-balance parametrization \eqref{cond2-2} with a nonnegative edge
coefficient $\gamma_{i+\frac12}$. This includes the exact coefficient
$\gamma_{i+\frac12}^{\rm exact}(\bm\rho)$ and each of the reconstructed
coefficients $\gamma_{i+\frac12}^{\rm rec}$. If $\rho_i(t)>0$ and
$\bm\rho(t)$ solves \eqref{master_rho_sec3}, then
\begin{equation}\label{eq:ME_EDD_identity}
  \frac{\dd}{\dd t}\mathcal F_h[\bm\rho(t)]
  =
  -\triangle_h^{\rm ME}[\bm\rho(t)]
  \leq 0,
\end{equation}
where the dissipation associated with the chosen edge coefficient is
\begin{equation}\label{eq:ME_dissipation}
  \triangle_h^{\rm ME}[\bm\rho]
  :=
  h\sum_{i=0}^{N-1}
  \gamma_{i+\frac12}
  \frac{e^{\mu_{i+1}}-e^{\mu_i}}{h}
  \frac{\mu_{i+1}-\mu_i}{h} \geq 0
\end{equation}
For the exact coefficient
$\gamma_{i+\frac12}=\gamma_{i+\frac12}^{\rm exact}(\bm\rho)$ defined in
\eqref{exact_interfaical_gamma}, 
\begin{equation}\label{eq:exact_D_match}
  \triangle_h^{\rm ME}[\bm\rho]
  =
  \triangle_h[\bm\rho],
\end{equation}
where $\triangle_h[\bm\rho]$, defined in \eqref{ap_Diss}, is the finite-difference approximation of the continuous Fokker--Planck dissipation.
\end{proposition}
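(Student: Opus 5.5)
The proof rests on the identity \eqref{dFdt_master}, obtained earlier by differentiating $\mathcal F_h$ and summing by parts over periodic indices. Since $\rho_i>0$, each $\mu_i=\ln\rho_i+V_i$ is well defined and $\frac{\dd}{\dd t}\mathcal F_h = h\sum_i \mu_i\,\dot\rho_i$, where the $-\rho_i$ term cancels the derivative of $\rho_i\ln\rho_i$ up to $\ln\rho_i$. Substituting \eqref{master_rho_sec3} and shifting the index in the terms coming from edge $i-\frac12$ (legitimate by periodicity) gives
\[
\frac{\dd \mathcal F_h}{\dd t}=h\sum_{i=0}^{N-1}(\mu_{i+1}-\mu_i)\bigl(F_{i+\frac12}\rho_i-B_{i+\frac12}\rho_{i+1}\bigr).
\]
We will rederive this briefly, because the sign conventions matter.

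Next, insert the detailed-balance parametrization \eqref{cond2-2}. The net edge flux becomes
\[
F_{i+\frac12}\rho_i-B_{i+\frac12}\rho_{i+1}=\tfrac{1}{h^2}\gamma_{i+\frac12}\bigl(e^{V_i}\rho_i-e^{V_{i+1}}\rho_{i+1}\bigr)=-\tfrac{1}{h^2}\gamma_{i+\frac12}\bigl(e^{\mu_{i+1}}-e^{\mu_i}\bigr),
\]
using $e^{V_i}\rho_i=e^{\mu_i}$. Plugging this into the previous display yields exactly $-\triangle_h^{\rm ME}$ as written in \eqref{eq:ME_dissipation}, with the factor $\frac1{h^2}$ split into two difference quotients.

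For nonnegativity, note that $t\mapsto e^t$ is increasing. Hence $(e^{\mu_{i+1}}-e^{\mu_i})(\mu_{i+1}-\mu_i)\ge 0$, and since $\gamma_{i+\frac12}\ge0$ every summand is nonnegative. This is the same mechanism as the inequality $(a-b)\ln(a/b)\ge0$ in Proposition~\ref{prop:master_EDL}. One could alternatively invoke that proposition directly with $f(x)=x\ln x$, but the direct computation is cleaner and keeps the grid scaling explicit. For the exact coefficient we must also check $\gamma^{\rm exact}\ge0$. This holds because $\widehat\rho_{i+\frac12}\ge0$ and $(\mu_{i+1}-\mu_i)/(e^{\mu_{i+1}}-e^{\mu_i})>0$. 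When $\mu_{i+1}=\mu_i$ the ratio is interpreted by continuity as $e^{-\mu_i}$; in that case the flux vanishes regardless, as in the proof of Proposition~\ref{prop:master_EDL}.

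Finally, for \eqref{eq:exact_D_match}, substitute $\gamma^{\rm exact}_{i+\frac12}$ from \eqref{exact_interfaical_gamma}. The factor $e^{\mu_{i+1}}-e^{\mu_i}$ cancels, leaving $h\sum_i\widehat\rho_{i+\frac12}\bigl((\mu_{i+1}-\mu_i)/h\bigr)^2=\triangle_h$. On edges with $\mu_{i+1}=\mu_i$ both sides vanish, so the degenerate case causes no trouble. The only delicate point in the whole argument is the summation-by-parts step with periodic indexing and the handling of the removable $0/0$ case; everything else is algebra.
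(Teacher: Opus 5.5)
Your proof is correct and follows the paper's reasoning. The paper obtains the identity from the summation-by-parts formula \eqref{dFdt_master} combined with the parametrization \eqref{cond2-2}, exactly as in its derivation of \eqref{exact_interfaical_gamma}, and attributes the sign to the detailed-balance energy--dissipation law of Proposition~\ref{prop:master_EDL}. You instead check the sign directly from the monotonicity of the exponential, which also covers edges with $\gamma_{i+\frac12}=0$ without invoking that proposition's positive-rate and irreducibility hypotheses.
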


\begin{remark}\label{rmk:DB-continuum-EDL}

Proposition~\ref{prop:EDD_consistency} also indicates a formal continuum
limit of the detailed-balance master equation \eqref{master_rho_sec3}.
Suppose that the discrete solutions converge smoothly as $h\to0$ and that
\[
  \gamma_{i+\frac12}\longrightarrow\gamma(x)
  \qquad\text{as}\qquad
  x_{i+\frac12}\to x.
\]
Formally, one then has
\begin{equation}
  \mathcal F_h\longrightarrow\mathcal F,
  \qquad
  \triangle_h^{\rm ME}
  \longrightarrow
  \int_\Omega
  \gamma(x)\,\partial_x(e^\mu)\,\partial_x\mu\,\dd x
  =
  \int_\Omega
  \gamma(x)e^\mu|\partial_x\mu|^2\,\dd x.
\end{equation}
To recover the original Fokker--Planck equation, one must have $\gamma(x)e^\mu=\rho,$
or equivalently,
\begin{equation}
  \gamma(x)=e^{-V(x)}.
\end{equation}
This formal continuum-limit argument motivates the approximation
\eqref{approx_interfaical_gamma}; the five reconstructions provide different
finite-mesh approximations of the same limiting coefficient.

\end{remark}

To establish variational consistency, we show in the remainder of this subsection that, for each fixed $t$ and any smooth strictly positive density $\rho(\x,t)$ with discrete representation $P_h\rho(t):=\bigl(\rho(x_i,t)\bigr)_{i=0}^{N-1},$
the dissipation induced by any of the five reconstructed coefficients $\gamma_{i+\frac12}^{\rm rec}$ satisfies
\begin{equation}
 \triangle_h^{\rm ME}[P_h\rho(t)]
 =
 \triangle_h[P_h\rho(t)]
 +O(h^2),
\end{equation}
under suitable smoothness assumptions on $V$. Indeed, we only need to show the second-order consistency between $\gamma^{\rm rec}_{i+\frac12}$ and $\gamma^{\rm exact}_{i+\frac12}(P_h\rho)$, which is proved in the following lemma.

\begin{lemma}
\label{prop:second_order_reconstruction}

Assume that $V$ has a $1$-periodic $C^{1,1}$ extension, and define
\begin{equation}
  \widehat{\gamma}_{i+\frac12}
  :=
  e^{-V(x_{i+\frac12})}.
\end{equation}
For each $\gamma^{\rm rec}_{i+\frac12}$, we have
\begin{equation}\label{eq:gamma_rec_consistency}
  \gamma^{\rm rec}_{i+\frac12}
  =
  \widehat{\gamma}_{i+\frac12}
  +O(h^2)
\end{equation}
uniformly in $i$. Let, in addition, $\rho$ be a positive periodic $C^2$ function, define its
discrete representation by $P_h\rho
  :=
  \bigl(\rho(x_i)\bigr)_{i=0}^{N-1},$
and assume that $\widehat\rho_{i+\frac12}
  =
  \rho(x_{i+\frac12})+O(h^2)$
uniformly in $i$. Then the exact density-dependent coefficient satisfies
\begin{equation}\label{eq:gamma_exact_consistency}
  \gamma^{\rm exact}_{i+\frac12}(P_h\rho)
  =
  \widehat{\gamma}_{i+\frac12}
  +O(h^2)
\end{equation}
uniformly in $i$. Consequently,
\begin{equation}\label{eq:gamma_rec_vs_exact}
    \gamma^{\rm exact}_{i+\frac12}(P_h\rho) = \gamma^{\rm rec}_{i+\frac12} +O(h^2).
\end{equation}

\end{lemma}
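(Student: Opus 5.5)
The plan is to reduce everything to statements about two-point means of a $C^{1,1}$ function sampled at $x_i$ and $x_{i+1}$. First, $C^{1,1}$ regularity of the periodic extension gives uniform midpoint expansions. Writing $a=V(x_i)$, $b=V(x_{i+1})$ and $c=V(x_{i+\frac12})$, Taylor's theorem with Lipschitz $V'$ yields
\[
a=c-\tfrac h2 V'(x_{i+\frac12})+O(h^2),\qquad b=c+\tfrac h2 V'(x_{i+\frac12})+O(h^2),
\]
uniformly in $i$. In particular $\tfrac{a+b}{2}=c+O(h^2)$ and $|b-a|=O(h)$, with all constants depending only on $\|V\|_{C^{1,1}}$.

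Second, each reconstruction is a mean $M(e^{-a},e^{-b})$ or $1/M(e^{a},e^{b})$, where $M$ is the arithmetic, geometric, harmonic or logarithmic mean. Every such $M$ is symmetric and $1$-homogeneous, so
\[
M(e^{-a},e^{-b})=e^{-\frac{a+b}{2}}\,M\bigl(e^{\delta/2},e^{-\delta/2}\bigr),\qquad \delta=b-a .
\]
The function $g_M(\delta):=M(e^{\delta/2},e^{-\delta/2})$ is even, smooth, and satisfies $g_M(0)=1$. Hence $g_M(\delta)=1+O(\delta^2)=1+O(h^2)$ on bounded sets of $\delta$. Explicitly, $g_M$ equals $\cosh(\delta/2)$, $1$, $1/\cosh(\delta/2)$, or $\sinh(\delta/2)/(\delta/2)$. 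Combining with the first step gives $\gamma^{\rm rec}_{i+\frac12}=e^{-c+O(h^2)}(1+O(h^2))=\widehat\gamma_{i+\frac12}+O(h^2)$, uniformly because $V$ is bounded. The SG/WPE case is $1/L(e^a,e^b)$, which is handled identically, with the reciprocal of an even function equal to $1+O(\delta^2)$.

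Third, for the exact coefficient I would set $\mu=\ln\rho+V$, which is $C^{1,1}$ since $\rho\in C^2$ is positive. The same factorization applies:
\[
\frac{\mu_{i+1}-\mu_i}{e^{\mu_{i+1}}-e^{\mu_i}}=\frac{1}{L(e^{\mu_i},e^{\mu_{i+1}})}=e^{-\frac{\mu_i+\mu_{i+1}}{2}}\,\frac{1}{g_L(\mu_{i+1}-\mu_i)}=e^{-\mu(x_{i+\frac12})}\bigl(1+O(h^2)\bigr).
\]
Multiplying by $\widehat\rho_{i+\frac12}=\rho(x_{i+\frac12})+O(h^2)$ and using that $\rho$ is bounded above and below gives $\rho(x_{i+\frac12})e^{-\ln\rho(x_{i+\frac12})-V(x_{i+\frac12})}+O(h^2)=\widehat\gamma_{i+\frac12}+O(h^2)$. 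The case $\mu_i=\mu_{i+1}$ is covered by continuity of $1/L$. The final claim \eqref{eq:gamma_rec_vs_exact} then follows by the triangle inequality.

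The main obstacle is the low regularity. Only $C^{1,1}$ is assumed, so I must avoid third derivatives: the midpoint error must come from the Lipschitz remainder of $V'$ in the first step, not from a cubic Taylor term. Evenness of $g_M$ disposes of the $O(\delta)$ term without needing any further smoothness. Uniformity in $i$ also requires positive lower bounds for $\rho$ and $e^{-V}$ on the compact periodic domain, which I would state explicitly.
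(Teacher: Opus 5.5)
Your proof is correct and follows essentially the same route as the paper. Both arguments combine a $C^{1,1}$ midpoint expansion with the fact that a symmetric two-point mean agrees with its center up to the square of the half-difference, applied to $e^{-V}$ (or to $e^{V}$ for SG/WPE) and to $e^{\mu}$ for the exact coefficient. The difference is only one of coordinates: the paper expands $u=e^{\pm V}$ and $g=e^{\mu}$ directly and invokes $M(a-d,a+d)=a+O(d^2)$, whereas you expand $V$ and $\mu$ themselves and use $1$-homogeneity to factor out $e^{-(V_i+V_{i+1})/2}$. A small benefit of your version is that the $O(\delta^2)$ bound becomes explicit through $\cosh(\delta/2)$, $1$, $1/\cosh(\delta/2)$ and $\sinh(\delta/2)/(\delta/2)$.
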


\begin{proof}
Let $x_m=x_{i+\frac12}$. For a positive $C^{1,1}$ function $u$, set
\[
  u_i:=u(x_i),\qquad
  a:=\frac{u_i+u_{i+1}}{2},
  \qquad
  d:=\frac{u_{i+1}-u_i}{2}.
\]
A symmetric expansion about $x_m$ gives
\[
  a=u(x_m)+O(h^2),
  \qquad
  d=O(h),
\]
uniformly in $i$. Moreover, the arithmetic, geometric, harmonic, and
logarithmic means satisfy
\[
  M(a-d,a+d)=a+O(d^2).
\]
Applying this observation to $u=e^{-V}$ proves
\eqref{eq:gamma_rec_consistency} for the ED, IWPE, AM, and LM
reconstructions. For the SG/WPE reconstruction,
\[
  \gamma^{\rm SG}_{i+\frac12}
  =
  \frac{1}{L(e^{V_i},e^{V_{i+1}})}
  =
  e^{-V(x_m)}+O(h^2)
  =
  \widehat{\gamma}_{i+\frac12}+O(h^2),
\]
where
\[
  L(a,b):=\frac{b-a}{\ln b-\ln a},
  \qquad
  L(a,a):=a.
\]
This proves \eqref{eq:gamma_rec_consistency} for all five
reconstructions.

For the exact coefficient, set
\[
  g:=e^V\rho=e^\mu.
\]
Equation \eqref{exact_interfaical_gamma} can then be written as
\[
  \gamma^{\rm exact}_{i+\frac12}(P_h\rho)
  =
  \frac{\widehat\rho_{i+\frac12}}
       {L(g_i,g_{i+1})}.
\]
Since $g$ is positive and $C^{1,1}$,
\[
  L(g_i,g_{i+1})
  =
  g(x_m)+O(h^2).
\]
Using
\[
  \widehat\rho_{i+\frac12}
  =
  \rho(x_m)+O(h^2),
  \qquad
  g(x_m)=e^{V(x_m)}\rho(x_m),
\]
we obtain
\[
  \gamma^{\rm exact}_{i+\frac12}(P_h\rho)
  =
  e^{-V(x_m)}+O(h^2)
  =
  \widehat{\gamma}_{i+\frac12}+O(h^2),
\]
which proves \eqref{eq:gamma_exact_consistency}. Equation
\eqref{eq:gamma_rec_vs_exact} follows immediately from
\eqref{eq:gamma_rec_consistency} and
\eqref{eq:gamma_exact_consistency}.
\end{proof}

Now we are ready to establish the variational consistency of the semi-discrete master equation with the continuous Fokker--Planck equation for the five reconstructed rates.

\begin{proposition}
\label{lemma:functional_consistency}

Let $\rho$ be a positive periodic function satisfying
\[
  V,\rho\in C_{\rm per}^2([0,1]),
  \qquad
  \mu=\ln\rho+V\in C_{\rm per}^3([0,1]),
  \qquad
  \rho\geq\rho_*>0,
\]
and define its discrete representation by $P_h\rho:=\bigl(\rho(x_i)\bigr)_{i=0}^{N-1}.$
Assume also that $\widehat\rho_{i+\frac12} = \rho(x_{i+\frac12})+O(h^2)$
uniformly in $i$. Then
\begin{equation}\label{eq:F_consistency}
  \mathcal F_h[P_h\rho]
  =
  \mathcal F[\rho]+O(h^2),
  \qquad
  \triangle_h[P_h\rho]
  =
  \triangle[\rho]+O(h^2).
\end{equation}
Moreover, for any of the five reconstructed coefficients
$\gamma_{i+\frac12}^{\rm rec}$,
\begin{equation}\label{eq:ME_D_consistency}
  \triangle_h^{\rm ME}[P_h\rho]
  =
  \triangle_h[P_h\rho]+O(h^2)
  =
  \triangle[\rho]+O(h^2),
\end{equation}
where $\triangle_h^{\rm ME}$ is the dissipation induced by
$\gamma_{i+\frac12}^{\rm rec}$.

\end{proposition}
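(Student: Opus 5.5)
The proof splits into three estimates, treated in order: consistency of the free energy, consistency of the finite-difference dissipation $\triangle_h$, and the comparison of $\triangle_h^{\rm ME}$ with $\triangle_h$. The last one is the comparison of Lemma~\ref{prop:second_order_reconstruction}.

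\textbf{Free energy.} $\mathcal F_h[P_h\rho]=h\sum_{i=0}^{N-1}\phi(x_i)$ with $\phi:=\rho\ln\rho-\rho+\rho V$. Since $\rho\ge\rho_*>0$ and $\rho,V\in C^2_{\rm per}$, $\phi$ is a periodic $C^2$ function. The rectangle rule on a uniform periodic grid equals the composite trapezoidal rule, whose error is bounded by $Ch^2\|\phi''\|_\infty$. Hence $\mathcal F_h[P_h\rho]=\mathcal F[\rho]+O(h^2)$.

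\textbf{Finite-difference dissipation.} Write $x_m=x_{i+\frac12}$. Because $\mu\in C^3_{\rm per}$, Taylor expansion about $x_m$ gives
\[
\frac{\mu(x_{i+1})-\mu(x_i)}{h}=\mu'(x_m)+O(h^2)
\]
uniformly in $i$. Combining this with $\widehat\rho_{i+\frac12}=\rho(x_m)+O(h^2)$ and the boundedness of $\rho$ and $\mu'$, each summand of $\triangle_h$ equals $\rho(x_m)|\mu'(x_m)|^2+O(h^2)$ uniformly. Summing $N=1/h$ terms with weight $h$ gives
\[
\triangle_h[P_h\rho]=h\sum_{i=0}^{N-1}\rho(x_{i+\frac12})|\mu'(x_{i+\frac12})|^2+O(h^2).
\]
The remaining sum is the midpoint rule applied to the periodic function $\psi:=\rho|\mu'|^2$, whose error is $O(h^2)$ provided $\psi\in C^2$. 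This regularity is the delicate point. We have $\rho\in C^2$ and $\mu'\in C^2$ since $\mu\in C^3$, so $\psi\in C^2$ and the second estimate in \eqref{eq:F_consistency} follows. If less regularity were available, I would instead bound the midpoint error through $\psi\in C^{1,1}$, which still gives $O(h^2)$.

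\textbf{Reconstructed versus exact coefficient.} By Proposition~\ref{prop:EDD_consistency}, $\triangle_h[P_h\rho]$ is exactly $\triangle_h^{\rm ME}$ evaluated with $\gamma^{\rm exact}_{i+\frac12}(P_h\rho)$. Therefore
\[
\triangle_h^{\rm ME,rec}-\triangle_h
=h\sum_{i=0}^{N-1}\bigl(\gamma^{\rm rec}_{i+\frac12}-\gamma^{\rm exact}_{i+\frac12}\bigr)\frac{e^{\mu_{i+1}}-e^{\mu_i}}{h}\,\frac{\mu_{i+1}-\mu_i}{h}.
\]
Lemma~\ref{prop:second_order_reconstruction} applies because $V\in C^2_{\rm per}\subset C^{1,1}$ and $\rho$ is positive, periodic and $C^2$. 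It gives $\gamma^{\rm rec}_{i+\frac12}-\gamma^{\rm exact}_{i+\frac12}=O(h^2)$ uniformly in $i$. Both difference quotients are $O(1)$ uniformly, since $\mu$ and $e^\mu$ are $C^1$ with bounded derivatives. So each summand is $O(h^2)$, and the weighted sum is $O(h^2)$. Combining this with the previous step yields \eqref{eq:ME_D_consistency}.

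I expect the only real care to be needed in tracking uniformity in $i$ and the regularity needed for the midpoint rule. The rest is routine.
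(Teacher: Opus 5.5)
Your proposal is correct and follows the paper's proof step for step. The steps are: the composite trapezoidal rule for $\mathcal F_h$; the symmetric difference quotient plus the composite midpoint rule for $\triangle_h$; and the identity \eqref{eq:exact_D_match} combined with Lemma~\ref{prop:second_order_reconstruction} and uniformly bounded difference quotients for $\triangle_h^{\rm ME}$. The only difference is that you make explicit the regularity checks the paper leaves implicit, namely that $\rho\ln\rho-\rho+\rho V$ and $\rho|\mu'|^2$ are $C^2$ and that $C^2_{\rm per}\subset C^{1,1}$.
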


\begin{proof}
The consistency of $\mathcal F_h$ follows from the composite trapezoidal
rule. A symmetric expansion about $x_{i+\frac12}$ gives
\[
  \frac{\mu_{i+1}-\mu_i}{h}
  =
  \partial_x\mu(x_{i+\frac12})+O(h^2).
\]
Together with the assumed approximation of
$\widehat\rho_{i+\frac12}$ and the composite midpoint rule, this yields
\[
  \triangle_h[P_h\rho]
  =
  h\sum_{i=0}^{N-1}
  \rho(x_{i+\frac12})
  |\partial_x\mu(x_{i+\frac12})|^2
  +O(h^2)
  =
  \triangle[\rho]+O(h^2).
\]

For a reconstructed coefficient,
\eqref{eq:exact_D_match} and \eqref{eq:gamma_rec_vs_exact} give
\begin{align*}
  \triangle_h^{\rm ME}[P_h\rho]
  -\triangle_h[P_h\rho]
  =
  h\sum_{i=0}^{N-1}
  \left(
    \gamma_{i+\frac12}^{\rm rec}
    -
    \gamma_{i+\frac12}^{\rm exact}(P_h\rho)
  \right)
  \frac{e^{\mu_{i+1}}-e^{\mu_i}}{h}
  \frac{\mu_{i+1}-\mu_i}{h} =O(h^2),
\end{align*}
since both difference quotients are uniformly bounded. Combining the last
two estimates proves \eqref{eq:ME_D_consistency}.
\end{proof}

\section{Temporal Discretizations}\label{sec:temporal}
In this section, we discuss time discretizations for the semi-discrete equation 
\begin{equation}\label{eq:semi_disc_linear}
\frac{\dd}{\dd t}\rho_i
= B_{i+\frac12}\rho_{i+1} - F_{i+\frac12}\rho_i
  - B_{i-\frac12}\rho_i + F_{i-\frac12}\rho_{i-1}, \quad  i = 0, 1, \ldots, N-1,
\end{equation}
with periodic indexing, i.e., $\rho_{-1}=\rho_{N-1}$, $\rho_N=\rho_0$, $B_{-\frac12}=B_{N-\frac12}$, and $F_{-\frac12}=F_{N-\frac12}$.
For simplicity, we only discuss the periodic boundary condition throughout this section. No-flux boundary conditions can be treated analogously by setting the boundary fluxes to zero. In matrix form, the semi-discrete system \eqref{eq:semi_disc_linear} can be written as
\begin{equation}
\frac{\dd}{\dd t}\,\bm\rho = A_h\,\bm\rho, \qquad
\bm\rho = (\rho_0,\rho_1,\dots,\rho_{N-1})^{\top}.
\end{equation}
Here, $A_h$ is a conservative Metzler matrix, i.e.\ the off-diagonals are nonnegative, diagonal entries are nonpositive, and each column sums to zero.

We first consider the fully explicit forward Euler scheme. Applying this to the semi-discrete equation \eqref{master_rho_sec3} yields:
\begin{equation}
\frac{\rho_i^{n+1} - \rho_i^n}{\Delta t} = B_{i+1/2} \rho_{i+1}^n - F_{i+1/2} \rho_i^n - B_{i-1/2} \rho_i^n + F_{i-1/2} \rho_{i-1}^n.
\end{equation}
Rearranging the terms, we obtain the update formula:
\begin{equation}
\rho_i^{n+1} = \Delta t B_{i+1/2} \rho_{i+1}^n + \left(1 - \Delta t (F_{i+1/2} + B_{i-1/2}) \right) \rho_i^n + \Delta t F_{i-1/2} \rho_{i-1}^n.
\end{equation}
This scheme can be interpreted as a discrete-time Markov chain, where $p_i^n = h\rho_i^n$ represents the probability mass at state $i$ at time $t_n$. To ensure that $\rho_i^{n+1}$ remains nonnegative given $\rho^n \ge 0$, the coefficients must be nonnegative. This imposes the following CFL-type condition on the time step $\Delta t$:
\begin{equation}
\Delta t \le \frac{1}{\max_i (F_{i+1/2} + B_{i-1/2})}.
\end{equation}
For a constant potential $V(x) = \text{const}$, this reduces to the classical heat-equation condition $\Delta t \le h^2/2$.

To overcome the strict time-step restriction, we employ the implicit Euler discretization. In matrix notation, the scheme reads:
\begin{equation}\label{Im_Euler}
\frac{\bm{\rho}^{n+1} - \bm{\rho}^n}{\Delta t} = A_h \bm{\rho}^{n+1} \quad \implies \quad (I - \Delta t A_h) \bm{\rho}^{n+1} = \bm{\rho}^n,
\end{equation}
where $I$ is the identity matrix. Since the matrix $(I - \Delta t A_h)$ is invertible for any $\Delta t > 0$, the implicit Euler scheme is uniquely solvable. Moreover, unconditional energy stability, positivity preservation, and mass conservation can be established, as shown in Proposition~\ref{prop:implicit_euler} below.

\begin{proposition}\label{prop:implicit_euler}

Let $A_h$ be the fixed, time-independent Markov generator associated with the
jump rates. Thus, $(A_h)_{ij}\geq 0 \quad (i\neq j)$ and $\bm{1}^{\top}A_h=\bm{0}^{\top}.$
Assume, in addition, that $A_h$ satisfies detailed balance with respect to the
discrete Gibbs state $\bm{\rho}^{\mathrm{eq}}$ of \eqref{eq:discrete_gibbs}, where
$\rho_i^{\mathrm{eq}}=Z_h^{-1}e^{-V_i}>0$, namely, $(A_h)_{ij}\rho_j^{\mathrm{eq}}=(A_h)_{ji}\rho_i^{\mathrm{eq}}, ~i\neq j.$
For a positive state $\bm{\rho}$, let
$\bm{\mu}=\bm{\mu}(\bm{\rho})$ denote the discrete chemical potential defined
componentwise by $\mu_i=\ln\rho_i+V_i$. Then, for any $\Delta t>0$, the
implicit Euler scheme (\ref{Im_Euler}) satisfies the following properties:

\begin{enumerate}
    \item \textbf{Unconditional positivity preservation.}
    If $\bm{\rho}^{n} > 0$, then $\bm{\rho}^{n+1} > 0$.

    \item \textbf{Mass conservation.}
    $\sum_i\rho_i^{n+1}=\sum_i\rho_i^n$.

    \item \textbf{Energy stability.}
    The discrete free energy is non-increasing:
    $\mathcal{F}_h[\bm{\rho}^{n+1}]
    \leq\mathcal{F}_h[\bm{\rho}^{n}]$.
\end{enumerate}

\end{proposition}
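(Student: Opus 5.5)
We prove the three properties in order: mass conservation first, then positivity through an $M$-matrix argument, and finally energy stability through convexity of $\mathcal F_h$ combined with the detailed-balance structure of $A_h$.

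\textbf{Mass conservation.} Multiply $(I-\Delta t A_h)\bm\rho^{n+1}=\bm\rho^n$ on the left by $\bm 1^\top$. Since $\bm 1^\top A_h=\bm 0^\top$, this gives $\bm 1^\top\bm\rho^{n+1}=\bm 1^\top\bm\rho^n$.

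\textbf{Positivity.} Set $M:=I-\Delta t A_h$. Its off-diagonal entries are $-\Delta t(A_h)_{ij}\le 0$. Each diagonal entry equals $1+\Delta t\sum_{i\ne j}(A_h)_{ij}$, so $M$ is strictly column diagonally dominant with positive diagonal. Hence $M$ is a nonsingular $M$-matrix and $M^{-1}\ge 0$ entrywise, which already gives $\bm\rho^{n+1}=M^{-1}\bm\rho^n\ge 0$. Strict positivity needs slightly more. If some $\rho^{n+1}_k$ were zero, the $k$th equation would read
\[
-\Delta t\sum_{j\ne k}(A_h)_{kj}\rho^{n+1}_j=\rho^n_k>0.
\]
The left-hand side is $\le 0$, a contradiction. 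So $\bm\rho^{n+1}>0$.

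\textbf{Energy stability.} The key observation is that $\mathcal F_h[\bm\rho]=h\sum_i(\rho_i\ln\rho_i-\rho_i+\rho_iV_i)$ is convex on the positive orthant, with gradient $h\mu_i$. Convexity gives
\[
\mathcal F_h[\bm\rho^{n+1}]-\mathcal F_h[\bm\rho^n]\le h\sum_i\mu_i^{n+1}(\rho_i^{n+1}-\rho_i^n)=h\,\Delta t\,(\bm\mu^{n+1})^\top A_h\bm\rho^{n+1}.
\]
It remains to show that this last quantity is $\le 0$. Write $q_{ij}=(A_h)_{ji}$ for the rate from $i$ to $j$. Because the columns of $A_h$ sum to zero, the same symmetrization as in the proof of Proposition~\ref{prop:master_EDL} (or \eqref{dFdt_master}) gives
\[
\bm\mu^\top A_h\bm\rho=-\tfrac12\sum_{i\ne j}(\mu_i-\mu_j)\,(q_{ij}\rho_i-q_{ji}\rho_j).
\]
Detailed balance with respect to $\bm\rho^{\rm eq}\propto e^{-V}$ means $q_{ij}e^{-V_i}=q_{ji}e^{-V_j}$, so
\[
q_{ij}\rho_i-q_{ji}\rho_j=q_{ij}e^{-V_i}\bigl(e^{\mu_i}-e^{\mu_j}\bigr).
\]
Monotonicity of the exponential then shows that every summand $(\mu_i-\mu_j)(q_{ij}\rho_i-q_{ji}\rho_j)$ is nonnegative. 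Hence $\bm\mu^\top A_h\bm\rho\le 0$, evaluated at $\bm\rho^{n+1}$, which is positive by the previous step.

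The main point to get right is energy stability: it needs the convexity inequality to be evaluated at $\bm\rho^{n+1}$, which is exactly the implicit gradient direction, and it therefore relies on positivity having been established first so that $\bm\mu^{n+1}$ is defined. The other two steps are routine.
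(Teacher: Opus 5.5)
Your proof is correct and follows essentially the same route as the paper. Positivity comes from the $M$-matrix structure of $I-\Delta t A_h$, mass conservation from left-multiplying by $\bm 1^\top$, and energy stability from the convexity inequality at $\bm\rho^{n+1}$ together with the detailed-balance symmetrization of $\langle\bm\mu,A_h\bm\rho\rangle_h$; your flux $q_{ij}e^{-V_i}(e^{\mu_i}-e^{\mu_j})$ is exactly the paper's $c_{ij}(r_i-r_j)$ in different variables. Your row-$k$ contradiction argument for strict positivity is a small but welcome addition, since the paper passes from $M^{-1}\ge 0$ to $\bm\rho^{n+1}>0$ without comment.
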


\begin{proof}

\textit{Positivity and mass conservation.}
Let $M=I-\Delta t A_h$. Since $(A_h)_{ij}\geq 0$ for $i\neq j$, the off-diagonal
entries of $M$ are nonpositive. Moreover, the zero-column-sum condition gives
$(A_h)_{jj}=-\sum_{i\neq j}(A_h)_{ij}$, and hence
\[
M_{jj}
=
1+\Delta t\sum_{i\neq j}(A_h)_{ij}
>
\Delta t\sum_{i\neq j}(A_h)_{ij}
=
\sum_{i\neq j}|M_{ij}|.
\]
Thus, $M$ is a strictly column-diagonally-dominant nonsingular $M$-matrix, so
$M^{-1}$ is entrywise nonnegative. It follows that
$\bm{\rho}^{n+1}=M^{-1}\bm{\rho}^{n} > 0$ whenever
$\bm{\rho}^{n} > 0$. Multiplying \eqref{Im_Euler} from the left by $\bm{1}^{\top}$ and
using $\bm{1}^{\top}A_h=\bm{0}^{\top}$ gives
$\bm{1}^{\top}\bm{\rho}^{n+1}
=\bm{1}^{\top}\bm{\rho}^{n}$, which proves mass conservation.

\noindent \textit{Energy stability of implicit Euler.}
To prove the energy stability, we first show that
$\langle\bm{\mu},A_h\bm{\rho}\rangle_h\leq 0$, where
$\langle\bm{u},\bm{v}\rangle_h
:=h\sum_i u_iv_i$. Define $r_i=\rho_i/\rho_i^{\mathrm{eq}}$. For $i\neq j$, detailed balance
allows us to introduce the symmetric coefficients
$c_{ij}:=(A_h)_{ij}\rho_j^{\mathrm{eq}}=(A_h)_{ji}\rho_i^{\mathrm{eq}}\geq 0$. Using the zero-column-sum condition $\bigl(\sum_{i=1}^N (A_h)_{ij}\bigr)\rho_j = 0$,
we obtain
\[
(A_h\bm{\rho})_i
=
\sum_{j\neq i}
\left((A_h)_{ij}\rho_j-(A_h)_{ji}\rho_i\right)
=
\sum_{j\neq i}c_{ij}(r_j-r_i).
\]
Since $\rho_i^{\mathrm{eq}}=Z_h^{-1}e^{-V_i}$, the chemical potential satisfies
$\mu_i=\ln r_i-\ln Z_h$. The constant $-\ln Z_h$ makes no contribution
because $\bm{1}^{\top}A_h=\bm{0}^{\top}$. Therefore,
\begin{align}
\langle\bm{\mu},A_h\bm{\rho}\rangle_h
=
h\sum_i\ln r_i\,(A_h\bm{\rho})_i \notag =
-\frac{h}{2}
\sum_i\sum_{j\neq i}
c_{ij}(r_j-r_i)(\ln r_j-\ln r_i)
\leq 0,                                           \label{eq:generator_dissipation}
\end{align}
because the logarithm is increasing. The discrete free energy $\mathcal{F}_h$ is convex, and its variational
derivative with respect to the discrete inner product is
$\bm{\mu}(\bm{\rho})$. The convexity inequality, applied at
$\bm{\rho}^{n+1}$, gives
\[
\mathcal{F}_h[\bm{\rho}^{n}]
-
\mathcal{F}_h[\bm{\rho}^{n+1}]
\geq
\left\langle
\bm{\mu}^{n+1},
\bm{\rho}^{n}-\bm{\rho}^{n+1}
\right\rangle_h.
\]
Using
$\bm{\rho}^{n}-\bm{\rho}^{n+1}
=-\Delta t A_h\bm{\rho}^{n+1}$ and
\eqref{Im_Euler}, we obtain
\begin{equation}\label{eq:BE_energy}
\mathcal{F}_h[\bm{\rho}^{n}]
-
\mathcal{F}_h[\bm{\rho}^{n+1}]
\geq
-\Delta t
\left\langle
\bm{\mu}^{n+1},
A_h\bm{\rho}^{n+1}
\right\rangle_h
\geq 0.
\end{equation}
This proves the claimed energy stability.

\end{proof}

Since $A_h$ is time independent, the Crank--Nicolson scheme can also be applied,
yielding second-order accuracy in time \cite{wang2007convergence}. It preserves
mass, but positivity and free-energy dissipation are generally not
unconditional and may require an additional time-step restriction; we
therefore do not discuss it further.

\section{Extensions to More General Settings}\label{sec:extensions}
In this section, we briefly discuss extensions of the proposed framework to more general settings. The construction of Section~\ref{sec:rates} uses only two ingredients: the discrete energy--dissipation law, which fixes the \emph{net flux} across an interface, and detailed balance, which fixes the \emph{splitting} of that flux into a forward and a backward rate. So the construction extends naturally to more general mobilities and free energies, and to higher dimensions.

% ---------------------------------------------------------------
\subsection{Nonlinear Mobility}\label{subsec:nonlinear_mob}
% ---------------------------------------------------------------

We first consider a FP equation of the form (\ref{FP_general}), where $m(\rho,\x)\geq0$ is the nonlinear mobility and 
$
\mathcal F[\rho]=\int_\Omega(\rho\ln\rho-\rho+\rho V)\,\dd x
$ is the free energy.  Under periodic or no-flux boundary
conditions, the equation (\ref{FP_general}) satisfies the energy--dissipation law
\begin{equation}\label{ED_nonlinear_M}
  \rho_t=\nabla\cdot\bigl(m(\rho,\x)\nabla\mu\bigr),
  \qquad
  \frac{\dd}{\dd t}\mathcal F[\rho]
  =-\int_\Omega m(\rho,\x)|\nabla\mu|^2\,\dd x\leq0.
\end{equation}
On a one-dimensional uniform grid, let $m_{i+\frac12}\geq0$ be a
second-order edge reconstruction of the mobility.  Replacing
$\widehat\rho_{i+\frac12}$ by $m_{i+\frac12}$ in
\eqref{ap_Diss} gives
\begin{equation}\label{Dh_nonlinear_M}
 \triangle_h[\bm\rho]
 =h\sum_{i=0}^{N-1}m_{i+\frac12}
   |\nabla_h\mu_{i+\frac12}|^2.
\end{equation}
The edgewise energy--dissipation matching and detailed balance then give
$F_{i+\frac12}=\frac{1}{h^2}\gamma_{i+\frac12}e^{V_i}$,
$B_{i+\frac12}=\frac{1}{h^2}\gamma_{i+\frac12}e^{V_{i+1}}$, with the exact
coefficient
\begin{equation}\label{gamma_nonlinear_M}
 \gamma^{\mathrm{exact}}_{i+\frac12}(\bm\rho)
 =m_{i+\frac12}
  \frac{\mu_{i+1}-\mu_i}
       {e^{\mu_{i+1}}-e^{\mu_i}}
 =\frac{m_{i+\frac12}}
       {L(e^{\mu_i},e^{\mu_{i+1}})}.
\end{equation}
When $\mu_{i+1}=\mu_i$, the quotient is understood by continuity as
$e^{-\mu_i}$.  Hence $\gamma^{\mathrm{exact}}_{i+\frac12}\geq0$. The only difference is that we now also need to specify the reconstruction of $\frac{m_{i+1/2}}{ \widehat\rho_{i+\frac12} }$ for a general mobility. Let $a(\rho,x):=m(\rho,x)/\rho$ for $\rho>0$, assuming a nonnegative
model-dependent extension to $\rho=0$ whenever degenerate states are
admitted, and set
\begin{equation}\label{eq:kinetic_reconstruction}
  \widehat a_{i+\frac12}(\bm\rho)
  :=\mathcal A\bigl(a(\rho_i,x_i),a(\rho_{i+1},x_{i+1})\bigr)\geq0,
\end{equation}
where $\mathcal A$ is a symmetric consistent mean.  If $\rho$ is
smooth and bounded away from zero and both $\widehat a_{i+\frac12}$
and $V_{i+\frac12}$ are second-order midpoint reconstructions, then the analysis in Lemma \ref{prop:second_order_reconstruction} implies that
$\gamma^{\mathrm{exact}}_{i+\frac12}$ and
$\widehat a_{i+\frac12}e^{-V_{i+\frac12}}$ have the same
leading-order expansion.  This motivates the computable rates
\begin{equation}\label{rates_nonlinear_M_product}
  F_{i+\frac12}(\bm\rho)
  =\frac{\widehat a_{i+\frac12} (\bm\rho)}{h^2}
    e^{-(V_{i+\frac12}-V_i)},
  \qquad
  B_{i+\frac12}(\bm\rho)
  =\frac{\widehat a_{i+\frac12} (\bm\rho) }{h^2}
    e^{-(V_{i+\frac12}-V_{i+1})},
\end{equation}
where $V_{i+\frac12}=V(x_{i+\frac12})+O(h^2)$.  These rates preserve
detailed balance exactly; the choice of $\mathcal A$ controls the
approximation of the nonlinear mobility.

Because the nonlinear mobility enters the two directional rates through the
common edge coefficient $\gamma_{i+\frac12}(\bm\rho)$, we evaluate this
coefficient explicitly at time $t^n$ while treating the density implicitly:
\begin{equation}\label{SI_nonlinear_M}
  \bigl(I-\Delta t\,A_h(\bm\rho^n)\bigr)\bm\rho^{n+1}
  =
  \bm\rho^n.
\end{equation}
This is a frozen-coefficient semi-implicit discretization. For every frozen state
$\bm\rho^n$ for which $\gamma_{i+\frac12}^n\geq0$, the resulting rates satisfy
\[
  F_{i+\frac12}^n e^{-V_i}
  =
  B_{i+\frac12}^n e^{-V_{i+1}},
\]
and hence $A_h(\bm\rho^n)$ is a conservative Metzler generator satisfying
detailed balance with respect to the same Gibbs state
$\rho_i^{\mathrm{eq}}\propto e^{-V_i}$. If some edge coefficients vanish, detailed balance
still holds, although the frozen chain may become reducible and the Gibbs
state need not be its unique invariant state.

\begin{proposition}
\label{prop:nonlinear_M}
Assume that $\bm\rho^n > 0$ componentwise  and that
$\gamma_{i+\frac12}^n\geq0$ for every edge. Then, for every $\Delta t>0$,
\eqref{SI_nonlinear_M} has a unique solution satisfying
\[
  \bm\rho^{n+1} > 0,
  \qquad
  h\sum_i\rho_i^{n+1}
  =
  h\sum_i\rho_i^n,
  \qquad
  \mathcal F_h[\bm\rho^{n+1}]
  \leq
  \mathcal F_h[\bm\rho^n].
\]
Moreover, The discrete free energy is non-increasing: $\mathcal F_h[\bm\rho^{n+1}] \leq \mathcal F_h[\bm\rho^n]$.
\end{proposition}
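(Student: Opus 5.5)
The plan is to reduce the statement to Proposition~\ref{prop:implicit_euler}, treating the frozen generator $A_h(\bm\rho^n)$ as a fixed matrix during the step from $t^n$ to $t^{n+1}$. Since $\bm\rho^n$ is given, the rates $F^n_{i+\frac12}=h^{-2}\gamma^n_{i+\frac12}e^{V_i}$ and $B^n_{i+\frac12}=h^{-2}\gamma^n_{i+\frac12}e^{V_{i+1}}$ are fixed nonnegative numbers. Hence $A_h(\bm\rho^n)$ has nonnegative off-diagonal entries and zero column sums, and $F^n e^{-V_i}=B^n e^{-V_{i+1}}$ edgewise gives detailed balance with respect to $\rho^{\rm eq}_i=Z_h^{-1}e^{-V_i}$.

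The one point needing care is that Proposition~\ref{prop:implicit_euler} was stated for a generator of the linear problem, where rates are positive. Here some $\gamma^n_{i+\frac12}$ may vanish, so the frozen chain can be reducible. I will therefore recheck each step of that proof without using irreducibility.

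\textbf{Solvability and mass.} For $M=I-\Delta t A_h(\bm\rho^n)$, strict column diagonal dominance $M_{jj}=1+\Delta t\sum_{i\ne j}(A_h)_{ij}>\sum_{i\ne j}|M_{ij}|$ holds even when some rates are zero. Its off-diagonal entries are nonpositive, so $M$ is a nonsingular $M$-matrix with $M^{-1}\ge0$. This gives unique solvability and $\bm\rho^{n+1}=M^{-1}\bm\rho^n\ge0$. Mass conservation follows from $\bm 1^\top A_h=\bm 0^\top$.

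\textbf{Strict positivity.} This step is the main obstacle. Irreducibility is no longer available, so I will not argue that $M^{-1}$ is entrywise positive. Instead, write $M=D-E$ with $D=\mathrm{diag}(M)>0$ and $E\ge0$, and use the Neumann series $M^{-1}=\sum_k (D^{-1}E)^kD^{-1}$, which converges by diagonal dominance. It contains the term $D^{-1}$, so $M^{-1}\ge D^{-1}$ entrywise and
\[
\rho^{n+1}_i\ge \rho^n_i/M_{ii}>0 .
\]

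\textbf{Energy.} With $\bm\rho^{n+1}>0$, the quantity $\bm\mu^{n+1}$ is well defined. Convexity of $\mathcal F_h$ gives
\[
\mathcal F_h[\bm\rho^n]-\mathcal F_h[\bm\rho^{n+1}]\ge -\Delta t\langle\bm\mu^{n+1},A_h(\bm\rho^n)\bm\rho^{n+1}\rangle_h .
\]
The right-hand side is nonnegative by the symmetric-coefficient computation $c_{ij}=(A_h)_{ij}\rho^{\rm eq}_j\ge0$ from the proof of Proposition~\ref{prop:implicit_euler}. That computation uses only nonnegativity, detailed balance, and zero column sums, and the constant $-\ln Z_h$ drops out because of the zero column sums. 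This proves the free-energy inequality, and the repeated sentence in the statement is the same claim.
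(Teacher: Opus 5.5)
Your proposal is correct and follows the paper's proof. The paper observes that the frozen generator $A_h(\bm\rho^n)$ satisfies the hypotheses of Proposition~\ref{prop:implicit_euler}, which never required positive rates or irreducibility, and it gets strict positivity from the positivity of the diagonal entries of $(I-\Delta t\,A_h^n)^{-1}$; your Neumann-series bound $M^{-1}\ge D^{-1}$ makes exactly that explicit. The step you flag as the main obstacle is easier than you suggest: a nonsingular entrywise nonnegative matrix has no zero row, so it already maps strictly positive vectors to strictly positive vectors.
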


\begin{proof}
For each fixed $n$, $A_h^n:=A_h(\bm\rho^n)$ satisfies all the assumptions of
Proposition~\ref{prop:implicit_euler}. Therefore, existence, uniqueness,
nonnegativity, mass conservation, and free-energy decay follow directly from
that proposition. Strictly positive data remain strictly positive because
the diagonal entries of $(I-\Delta t\,A_h^n)^{-1}$ are positive.

\end{proof}

% ---------------------------------------------------------------
\subsection{Nonlocal Interaction Energies}\label{subsec:nonlocal}
% ---------------------------------------------------------------

In this section, we discuss how to extend the above construction to cases in which the free energy contains a nonlocal interaction term \cite{carrillo2015finite,chow2019entropy}. We consider the free energy
\begin{equation}\label{F_nonlocal}
  \mathcal{F}[\rho]
  = \int_\Omega \bigl(\rho\ln\rho-\rho+\rho V(x)\bigr)\,\dd x
  + \frac12\int_\Omega\!\int_\Omega K(x,x')\,\rho(x)\,\rho(x')\,\dd x\,\dd x'.
\end{equation}
Here, $K(x,x')=K(x',x)$ is a symmetric kernel, and the chemical potential is
\begin{equation}\label{eq:nonlocal_chemical_potential}
  \mu(\rho,x) = \ln\rho(x)+\widetilde V(\rho,x),
  \qquad
  \widetilde V(\rho,x) := V(x) + \int_\Omega K(x,x')\,\rho(x')\,\dd x' .
\end{equation}
A positive equilibrium with unit mass is a constrained critical point of $\mathcal F$ and hence satisfies $\ln\rho^{\mathrm{eq}}+\widetilde V(\rho^{\mathrm{eq}},\cdot)=\lambda$ for a multiplier $\lambda$, or equivalently the fixed-point relation
\begin{equation}\label{eq:self_consistent_fixed_point}
  \rho^{\mathrm{eq}}(x)
  = \frac{\exp\bigl(-\widetilde V(\rho^{\mathrm{eq}},x)\bigr)}
         {\displaystyle\int_\Omega \exp\bigl(-\widetilde V(\rho^{\mathrm{eq}},y)\bigr)\,\dd y} .
\end{equation}
The existence, uniqueness, and bifurcation structure of such self-consistent equilibria depend on the particular forms of $V$ and $K$, and multiple nonhomogeneous equilibria may occur
\cite{carrillo2020long, carrillo2026long}.
Since the equilibrium $\rho^{\mathrm{eq}}$ is in general not available in closed form, it cannot serve a priori as the reference state in the detailed-balance condition \eqref{cond2}.

To overcome this challenge, we freeze the self-consistent potential over each time step. This approach is similar to the schemes in \cite{liu2018positivity, schlichting2022scharfetter, ding2023convergence}. At the continuous level, given $\rho^n$ we set $\widetilde V^n(x):=\widetilde V(\rho^n,x)=V(x)+\int_\Omega K(x,x')\rho^n(x')\,\dd x'$ and approximate the nonlocal dynamics on $(t^n,t^{n+1})$ by the linear Fokker--Planck equation
\begin{equation}\label{FP_local_to_nonlocal}
  \pp_t \rho = \nabla\cdot\bigl(\rho\nabla \widetilde V^n+\nabla\rho\bigr),
  \qquad t\in(t^{n},t^{n+1}),
\end{equation}
to which the construction of Section~\ref{sec:rates} applies.  In the discrete setting, given $\bm\rho^n$, put
\begin{equation}\label{eq:lagged_potential}
  \widetilde V_i^n = V_i + h\sum_{j=0}^{N-1} K(x_i,x_j)\,\rho_j^n,
  \qquad i=0,\ldots,N-1,
\end{equation}
that is, $\widetilde{\bm V}^n=\bm V+\mathcal K_h\bm\rho^n$, where
$K_{ij}:=K(x_i,x_j)$ and $(\mathcal K_h\bm u)_i:=h\sum_{j=0}^{N-1}K_{ij}u_j$
is the discrete interaction operator, the trapezoidal quadrature of
$u\mapsto\int_\Omega K(\cdot,y)u(y)\,\dd y$. Writing
$\langle\bm u,\bm v\rangle_h:=h\sum_i u_iv_i$ and $\|\bm u\|_h^2:=\langle\bm u,\bm u\rangle_h$,
the \emph{true} discrete free energy corresponding to \eqref{F_nonlocal} is
\begin{equation}\label{eq:F_h_nonlocal}
  \mathcal F_h[\bm\rho]
  = h\sum_{i=0}^{N-1}\bigl(\rho_i\ln\rho_i-\rho_i+V_i\rho_i\bigr)
    + \tfrac12\bigl\langle\mathcal K_h\bm\rho,\bm\rho\bigr\rangle_h ,
\end{equation}
which reduces to \eqref{eq:discrete_F} when $K\equiv0$.
We can assemble the rates from any of the reconstructions of Section~\ref{sec:rates} applied to $\widetilde V^n$,
\begin{equation}\label{eq:lagged_rates}
  F_{i+\frac12}^n = \frac{1}{h^2}\gamma_{i+\frac12}^n\,e^{\widetilde V_i^n},
  \qquad
  B_{i+\frac12}^n = \frac{1}{h^2}\gamma_{i+\frac12}^n\,e^{\widetilde V_{i+1}^n} .
\end{equation}
Hence, the detailed balance holds with respect to the \emph{lagged Gibbs state} $\rho_i^{\mathrm{eq},n}\propto e^{-\widetilde V_i^n}$. Writing $A_h^n=A_h(\bm\rho^n)$ for the generator assembled from \eqref{eq:lagged_rates}, backward Euler gives the semi-implicit update
\begin{equation}\label{eq:SI_update}
  \bigl(I-\Delta t\,A_h^n\bigr)\bm\rho^{n+1}=\bm\rho^n .
\end{equation}
The nonlocal coupling enters only through the explicitly evaluated potential $\widetilde{\bm V}^n$, so each step costs one sparse linear solve together with one discrete convolution, evaluated by FFT for a translation-invariant kernel on a periodic grid. Since $I-\Delta t\,A_h^n$ is an $M$-matrix, nonnegativity and mass are preserved for every $\Delta t>0$.

\noindent {\bf Fixed points are self-consistent equilibria.}
If the update stalls, $\bm\rho^{n+1}=\bm\rho^n=\bm\rho^\ast$, then \eqref{eq:SI_update} gives $A_h(\bm\rho^\ast)\bm\rho^\ast=0$, which for an irreducible frozen generator is equivalent to
\begin{equation}\label{eq:discrete_self_consistent_equilibrium}
  \rho_i^\ast \propto \exp\bigl(-\widetilde V_i(\bm\rho^\ast)\bigr),
\end{equation}
the discrete counterpart of \eqref{eq:self_consistent_fixed_point}. Thus, if the discrete dynamics converges, its limit is a self-consistent discrete
equilibrium. General convergence to equilibrium is not addressed here and may remain
open even for the continuous PDE, particularly for irregular interaction
kernels \cite{carrillo2026well}.

\noindent {\bf Accuracy.} We give only a formal accuracy analysis of the lagged approach. Assume
that \(V\), \(K\), and the solution are smooth and periodic, with the density
bounded away from zero. Backward Euler is first-order accurate in time, and
lagging the nonlocal potential introduces an additional \(O(\Delta t)\) error, so
the expected temporal order remains one. For each frozen potential,
\cref{prop:second_order_reconstruction} and
\cref{lemma:functional_consistency} give second-order spatial
consistency of the rate construction. The periodic trapezoidal rule used for
the nonlocal potential is also at least second-order accurate, and is
spectrally accurate for smooth periodic data, so it does not reduce the
spatial order. We therefore formally expect the lagged scheme to be first-order in time and second-order in space.
A proof would require a stability estimate uniform in \(h\), which is not
established here. The predicted second-order spatial rate is observed in
Section~\ref{subsec:exp4} when \(\Delta t=h^2\).

\noindent{\bf Energy stability.} We now discuss the energy stability of the scheme. For each frozen step, \(A_h^n\) satisfies detailed balance with respect to the
lagged Gibbs state. Therefore, as shown in
\cref{prop:nonlinear_M}, the update dissipates the lagged free energy
\begin{equation}\label{eq:frozen_nonlocal_energy}
  \mathcal F_{h,\mathrm{lag}}^n[\bm\rho]
  =
  h\sum_i
  \bigl(
    \rho_i\ln\rho_i-\rho_i+\rho_i\widetilde V_i^n
  \bigr)\ ,
\end{equation}
which does not immediately imply decay of the true discrete free
energy \(\mathcal F_h\) of \eqref{eq:F_h_nonlocal}, as the interaction potential is evaluated at $\bm\rho^n$.  The gap between the two energy increments is an exact quadratic form in $\delta\bm\rho:=\bm\rho^{n+1}-\bm\rho^n$. We have the following proposition.
\begin{proposition}
\label{prop:decomposition}
Assume $K_{ij}=K_{ji}$ and $\bm\rho^n>0$ componentwise, and let
$\bm\rho^{n+1}$ solve \eqref{eq:SI_update} with $\widetilde{\bm V}^n$ given
by \eqref{eq:lagged_potential}; then $\bm\rho^{n+1}>0$. Define
\[
  \mathcal I_h^n
  := -\bigl\langle \ln\bm\rho^{n+1}+\widetilde{\bm V}^n,\;
     A_h^n\bm\rho^{n+1}\bigr\rangle_h ,
  \qquad
  \lambda_{h, +}
  := \Bigl(\sup\Bigl\{
     \tfrac{\langle\mathcal K_h\bm u,\bm u\rangle_h}{\|\bm u\|_h^2}
     \;:\; \bm u\neq\bm 0,\ \textstyle\sum_i u_i=0
     \Bigr\}\Bigr)_{\!+},
\]
where \(r_+:=\max\{r,0\}\), and $\mathcal I_h^n\ge0$ by the detailed-balance argument of
\cref{prop:implicit_euler}. Then
\begin{equation}\label{eq:F_decomp}
  \mathcal F_h[\bm\rho^{n+1}]-\mathcal F_h[\bm\rho^n]
  = \mathcal F_{h,\mathrm{lag}}^n[\bm\rho^{n+1}]
    -\mathcal F_{h,\mathrm{lag}}^n[\bm\rho^n]
    +\mathcal R^n
  \;\le\; -\Delta t\,\mathcal I_h^n+\mathcal R^n\ ,
\end{equation}
where $\mathcal R^n := \tfrac12\bigl\langle\mathcal K_h\,\delta\bm\rho, \delta\bm\rho\bigr\rangle_h$. In particular:
\begin{enumerate}
\item If $\lambda_{h, +}=0$, i.e., $\mathcal K_h$ is negative semidefinite on
  the zero-mass subspace, then
  $\mathcal F_h[\bm\rho^{n+1}]-\mathcal F_h[\bm\rho^n]
   \le-\Delta t\,\mathcal I_h^n\le0$ for every $\Delta t>0$.
\item For a general symmetric kernel, if
  \begin{equation}\label{eq:nonlocal_energy_condition}
    \lambda_{h, +}\,\|\delta\bm\rho\|_h^2\;\le\;\Delta t\,\mathcal I_h^n,
  \end{equation}
  then
  $\mathcal F_h[\bm\rho^{n+1}]-\mathcal F_h[\bm\rho^n]
   \le-\tfrac{\Delta t}{2}\,\mathcal I_h^n\le0$.
\end{enumerate}
\end{proposition}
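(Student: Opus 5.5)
The plan is to split $\mathcal F_h$ into the lagged energy plus an exact quadratic remainder, and then treat the lagged part exactly as in the backward Euler analysis of Proposition~\ref{prop:implicit_euler}. Positivity of $\bm\rho^{n+1}$ comes first and is immediate. $A_h^n$ is a conservative Metzler generator satisfying detailed balance with respect to the lagged Gibbs state $\rho_i^{\mathrm{eq},n}\propto e^{-\widetilde V_i^n}$. Hence $I-\Delta t A_h^n$ is a nonsingular $M$-matrix with entrywise nonnegative inverse and positive diagonal, so $\bm\rho^{n+1}>0$, as in Proposition~\ref{prop:nonlinear_M}. Mass conservation gives $\sum_i\delta\rho_i=0$, which we will need at the end.

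\textbf{Step 1 (exact identity).} Write $\mathcal F_h[\bm\rho]=h\sum_i(\rho_i\ln\rho_i-\rho_i+V_i\rho_i)+\tfrac12\langle\mathcal K_h\bm\rho,\bm\rho\rangle_h$. Since $\widetilde{\bm V}^n=\bm V+\mathcal K_h\bm\rho^n$,
\[
\mathcal F_h[\bm\rho]-\mathcal F^n_{h,\mathrm{lag}}[\bm\rho]=\tfrac12\langle\mathcal K_h\bm\rho,\bm\rho\rangle_h-\langle\mathcal K_h\bm\rho^n,\bm\rho\rangle_h .
\]
Evaluate this at $\bm\rho^{n+1}=\bm\rho^n+\delta\bm\rho$ and at $\bm\rho^n$, and subtract. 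The symmetry $K_{ij}=K_{ji}$ makes $\mathcal K_h$ self-adjoint in $\langle\cdot,\cdot\rangle_h$. Expanding $\tfrac12\langle\mathcal K_h(\bm\rho^n+\delta\bm\rho),\bm\rho^n+\delta\bm\rho\rangle_h$, the cross terms $\langle\mathcal K_h\bm\rho^n,\delta\bm\rho\rangle_h$ cancel against $-\langle\mathcal K_h\bm\rho^n,\delta\bm\rho\rangle_h$. The only surviving difference is $\tfrac12\langle\mathcal K_h\delta\bm\rho,\delta\bm\rho\rangle_h=\mathcal R^n$, which gives the equality in \eqref{eq:F_decomp}. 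This bookkeeping is the only place where care is needed, and I expect it to be the main (though modest) obstacle: one must check that the lagged linear term, and not a symmetrized one, produces exactly this cancellation.

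\textbf{Step 2 (lagged dissipation).} $\mathcal F^n_{h,\mathrm{lag}}$ is convex with discrete gradient $\ln\bm\rho+\widetilde{\bm V}^n$. The convexity inequality at $\bm\rho^{n+1}$ gives
\[
\mathcal F^n_{h,\mathrm{lag}}[\bm\rho^{n+1}]-\mathcal F^n_{h,\mathrm{lag}}[\bm\rho^n]\le\langle\ln\bm\rho^{n+1}+\widetilde{\bm V}^n,\delta\bm\rho\rangle_h .
\]
Substituting $\delta\bm\rho=\Delta t A_h^n\bm\rho^{n+1}$ turns the right-hand side into $-\Delta t\,\mathcal I_h^n$. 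Nonnegativity $\mathcal I_h^n\ge0$ follows from the symmetric-coefficient computation \eqref{eq:generator_dissipation} applied with $V$ replaced by $\widetilde V^n$; additive constants drop out because $\bm 1^\top A_h^n=\bm 0^\top$. This proves the inequality in \eqref{eq:F_decomp}.

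\textbf{Step 3 (the two cases).} Since $\sum_i\delta\rho_i=0$, the definition of $\lambda_{h,+}$ yields $\mathcal R^n\le\tfrac12\lambda_{h,+}\|\delta\bm\rho\|_h^2$. In case (1), $\lambda_{h,+}=0$, so $\mathcal R^n\le0$ and the claim follows for every $\Delta t>0$. In case (2), condition \eqref{eq:nonlocal_energy_condition} gives $\mathcal R^n\le\tfrac{\Delta t}{2}\mathcal I_h^n$. Hence the energy change is at most $-\Delta t\,\mathcal I_h^n+\tfrac{\Delta t}{2}\mathcal I_h^n=-\tfrac{\Delta t}{2}\mathcal I_h^n\le0$.
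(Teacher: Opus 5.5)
Your proposal is correct and follows essentially the same route as the paper's proof. Both arguments use positivity from the $M$-matrix structure of $I-\Delta t\,A_h^n$, the exact identity for $\mathcal R^n=\tfrac12\langle\mathcal K_h\delta\bm\rho,\delta\bm\rho\rangle_h$ obtained from self-adjointness of $\mathcal K_h$, the convexity-plus-backward-Euler bound on the lagged energy inherited from Proposition~\ref{prop:implicit_euler}, and the Rayleigh-quotient bound $\mathcal R^n\le\tfrac12\lambda_{h,+}\|\delta\bm\rho\|_h^2$ on the zero-mass subspace; your Step~1 expansion simply spells out the cancellation that the paper states in one line.
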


\begin{proof}
Positivity of $\bm\rho^{n+1}$ follows from \cref{prop:implicit_euler}:
$(I-\Delta t\,A_h^n)^{-1}$ is the inverse of a nonsingular M-matrix and has
positive diagonal entries. The two energies differ only in their
interaction terms,
$\tfrac12\langle\mathcal K_h\bm\rho,\bm\rho\rangle_h$ versus
$\langle\mathcal K_h\bm\rho^n,\bm\rho\rangle_h$; subtracting the two
increments and using the self-adjointness of $\mathcal K_h$ with respect
to $\langle\cdot,\cdot\rangle_h$ gives
\[
  \tfrac12\langle\mathcal K_h\bm\rho^{n+1},\bm\rho^{n+1}\rangle_h
  -\tfrac12\langle\mathcal K_h\bm\rho^{n},\bm\rho^{n}\rangle_h
  -\langle\mathcal K_h\bm\rho^{n},\delta\bm\rho\rangle_h
  = \tfrac12\langle\mathcal K_h\,\delta\bm\rho,\delta\bm\rho\rangle_h
  = \mathcal R^n,
\]
which proves the identity in \eqref{eq:F_decomp}. Since $A_h^n$ satisfies
detailed balance with respect to $\bm\rho^{\mathrm{eq},n}$, the convexity argument of
\cref{prop:implicit_euler}, applied to the frozen generator with potential
$\widetilde{\bm V}^n$, yields
$\mathcal F_{h,\mathrm{lag}}^n[\bm\rho^{n+1}]
 -\mathcal F_{h,\mathrm{lag}}^n[\bm\rho^n]\le-\Delta t\,\mathcal I_h^n$,
which gives the inequality in \eqref{eq:F_decomp}. Since $\mathbf 1^{\!\top}A_h^n=\mathbf 0^{\!\top}$, mass is conserved and
$\sum_i\delta\rho_i=0$. Hence
$\mathcal R^n\le\tfrac12\lambda_{h, +}\|\delta\bm\rho\|_h^2$, which is
nonpositive if $\lambda_{h, +}=0$, and is bounded by
$\tfrac{\Delta t}{2}\mathcal I_h^n$ under
\eqref{eq:nonlocal_energy_condition}. Both conclusions then follow from
\eqref{eq:F_decomp}.
\end{proof}

For a translation-invariant kernel on the periodic grid, $\mathcal K_h$ is
diagonalized by the discrete Fourier transform, and case~(1) applies
whenever the nonconstant discrete Fourier coefficients of the kernel are
nonpositive, as for the periodized attractive Gaussian used in
\cref{subsec:exp4}. For a general continuous kernel, $\lambda_{h, +}$ is
the largest Rayleigh quotient, over mean-zero grid functions, of the
quadrature discretization of the integral operator
$u\mapsto\int_\Omega K(\cdot,y)\,u(y)\,\mathrm dy$, and is therefore
bounded uniformly in $h$; condition
\eqref{eq:nonlocal_energy_condition} thus constrains the time step through
the kernel strength. This is an a posteriori sufficient condition for energy decay.

\begin{remark}
The two cases above are reflected in related semi-implicit schemes. For the
Poisson--Nernst--Planck system, the electrostatic interaction is generated by
a positive-semidefinite Poisson Green operator. The semi-implicit Slotboom
scheme analyzed in \cite{ding2023convergence} evaluates the electrostatic
potential at the previous time level. Its energy estimate contains a
nonnegative quadratic remainder, and energy decay is obtained under a
time-step condition involving bounds on the ionic concentrations and the
electric potential. By contrast, in the parabolic--elliptic Keller--Segel scheme of \cite{liu2018positivity}, the chemoattractant entering the density update is generated by the density at the previous time level. Since the attractive interaction energy has the form $-\frac12\langle\bm\rho,G_h\bm\rho\rangle_h,~ G_h\succeq0,$
this lagged treatment is equivalent to a convex splitting of the interaction
energy, and the corresponding quadratic remainder is nonpositive. These two
examples are consistent with the sign distinction in
\cref{prop:decomposition}.
\end{remark}

\iffalse
\begin{remark}[Comparison with nonlinear upwind schemes]
The upwind rates $F_{i+\frac12}=h^{-2}(\mu_i-\mu_{i+1})_+$, $B_{i+\frac12}=h^{-2}(\mu_{i+1}-\mu_i)_+$ of \cite{carrillo2015finite} depend on $\rho$ both through $\mu$ and through the upwind interface density, so an implicit step requires a nonlinear solve while an explicit step is subject to $\Delta t=O(h^2)$; the scheme is moreover first-order in space. Confining the nonlinearity to $\widetilde V^n$ keeps the step linear and second-order in space.
\end{remark}
\fi

\subsection{Higher-dimensional cases}
\label{subsec:high_dim_edgewise}
% ---------------------------------------------------------------

The one-dimensional construction extends directly to higher dimensions,
because \eqref{cond1} and \eqref{cond2} involve only two neighboring grid
points and can therefore be imposed along each coordinate direction.

Let $\Omega=\prod_{k=1}^{d}(-L_k,L_k)$
be the computational domain, and let $\bm{x}_{\bm{i}} = (-L_1+i_1h_1,\ldots,-L_d+i_dh_d)$, $\bm{i}=(i_1,\ldots,i_d)$, $h_k=2L_k/N_k$,
denote the Cartesian grid points, where ${\bm i} \in \mathcal V_h:=\bigl\{\bm i=(i_1,\ldots,i_d):\ 0\leq i_k\leq N_k-1,\ k=1,\ldots,d\bigr\},$. We use $\bm{e}_k$ to denote the unit
multi-index in the $k$th coordinate direction and set $|C_h|=\prod_{k=1}^{d}h_k.$
As in Section~\ref{sec:rates}, we write
\begin{equation}\label{eq:multidim_notation}
    \rho_{\bm{i}}
    \approx
    \rho(\bm{x}_{\bm{i}}),
    \qquad
    V_{\bm{i}}
    =
    V(\bm{x}_{\bm{i}}),
    \qquad
    \mu_{\bm{i}}
    =
    \ln\rho_{\bm{i}}+V_{\bm{i}},
    \qquad
    g_{\bm{i}}
    =
    e^{\mu_{\bm{i}}}
    =
    e^{V_{\bm{i}}}\rho_{\bm{i}}.
\end{equation}
The discrete free energy is defined by
\begin{equation}\label{eq:multidim_energy}
    \mathcal{F}_h[\bm{\rho}]
    =
    |C_h|
    \sum_{\bm{i}}
    \left[
        \rho_{\bm{i}}\ln\rho_{\bm{i}}
        -
        \rho_{\bm{i}}
        +
        V_{\bm{i}}\rho_{\bm{i}}
    \right].
\end{equation}
For each coordinate edge joining $\bm{i}$ and
$\bm{i}+\bm{e}_k$, let
$m_{\bm{i}+\frac12\bm{e}_k}\geq 0$ be the corresponding discrete mobility.
The discrete dissipation is
\begin{equation}\label{eq:multidim_dissipation}
    \triangle_h[\bm{\rho}]
    =
    |C_h|
    \sum_{\bm{i}}
    \sum_{k=1}^{d}
    m_{\bm{i}+\frac12\bm{e}_k}
    \left(
        \frac{
            \mu_{\bm{i}+\bm{e}_k}
            -
            \mu_{\bm{i}}
        }{h_k}
    \right)^2,
\end{equation}
where each coordinate edge is counted once.

The semi-discrete master equation can be written in the form
\begin{equation}\label{eq:multidim_master}
    \frac{\dd\rho_{\bm{i}}}{\dd t}
    =
    \sum_{k=1}^{d}
    \left(
        J_{\bm{i}-\frac12\bm{e}_k}
        -
        J_{\bm{i}+\frac12\bm{e}_k}
    \right), \quad     J_{\bm{i}+\frac12\bm{e}_k}
    =
    F_{\bm{i}+\frac12\bm{e}_k}\rho_{\bm{i}}
    -
    B_{\bm{i}+\frac12\bm{e}_k}\rho_{\bm{i}+\bm{e}_k},
\end{equation}
with periodic indexing for periodic boundary conditions and vanishing
boundary fluxes for no-flux boundary conditions. Since \eqref{cond1} and \eqref{cond2} relate only the two endpoints of a
single edge, they can be applied to each direction with $h$ replaced by $h_k$:
\begin{equation}\label{eq:multidim_conditions}
    J_{\bm{i}+\frac12\bm{e}_k}
    =
    -\frac{m_{\bm{i}+\frac12\bm{e}_k}}{h_k^2}
    \left(\mu_{\bm{i}+\bm{e}_k}-\mu_{\bm{i}}\right),
    \qquad
    F_{\bm{i}+\frac12\bm{e}_k}e^{-V_{\bm{i}}}
    =
    B_{\bm{i}+\frac12\bm{e}_k}e^{-V_{\bm{i}+\bm{e}_k}}.
\end{equation}
The derivation of Section~\ref{sec:rates} then applies edge by edge and gives
\begin{equation}\label{eq:multidim_rates}
    F_{\bm{i}+\frac12\bm{e}_k}
    =
    \frac{\gamma_{\bm{i}+\frac12\bm{e}_k}}{h_k^2}e^{V_{\bm{i}}},
    \qquad
    B_{\bm{i}+\frac12\bm{e}_k}
    =
    \frac{\gamma_{\bm{i}+\frac12\bm{e}_k}}{h_k^2}e^{V_{\bm{i}+\bm{e}_k}},
    \qquad
    {\gamma}_{\bm{i}+\frac12\bm{e}_k}
    =
    m_{\bm{i}+\frac12\bm{e}_k}
    \frac{\mu_{\bm{i}+\bm{e}_k}-\mu_{\bm{i}}}
         {g_{\bm{i}+\bm{e}_k}-g_{\bm{i}}},
\end{equation}
the quotient being understood by continuity when
$g_{\bm{i}+\bm{e}_k}=g_{\bm{i}}$. For practical schemes, any of the potential reconstructions introduced in Section~3.1 can be combined with the nonlinear-mobility reconstruction of Section~5.1 and applied edge by edge in each coordinate direction.

The structural properties established in one dimension extend edgewise to
the multidimensional scheme. Under periodic or no-flux boundary conditions,
the master-equation form preserves mass and nonnegativity. Moreover, detailed
balance along each coordinate edge determines the discrete Gibbs state $\rho_{\bm i}^{\mathrm{eq}}=\frac{e^{-V_{\bm i}}}{Z_h}$ with $Z_h=|C_h|\sum_{\bm i\in\mathcal V_h}e^{-V_{\bm i}}$, and yields the discrete energy--dissipation law
\begin{equation}
  \frac{\dd}{\dd t}\mathcal F_h[\bm\rho]
  =
  -\triangle_h^{\rm ME}[\bm\rho]
  \leq0,
\end{equation}
where $\triangle_h^{\rm ME}$ denotes the dissipation induced by the chosen
transition rates; see Proposition~\ref{prop:EDD_consistency}.
For the exact density-dependent coefficients in
\eqref{eq:multidim_rates}, the prescribed finite-difference dissipation is
reproduced edge by edge: $\triangle_h^{\rm ME}[\bm\rho]
  =
  \triangle_h[\bm\rho].$
For transition rates constructed from any of the five second-order edge
reconstructions of Section~\ref{subsec:reconstructions}, let $\rho$ be a
smooth strictly positive density and let $P_h\rho$ denote its discrete
representation. Under the multidimensional analogues of the regularity and
interface-reconstruction assumptions in
Lemma~\ref{prop:second_order_reconstruction}, one has
\begin{equation}
  \triangle_h^{\rm ME}[P_h\rho]
  =
  \triangle_h[P_h\rho]+O(h^2),
  \qquad
  h:=\max_{1\leq k\leq d}h_k.
\end{equation}
Consequently, whenever
$\triangle_h[P_h\rho]=\triangle[\rho]+O(h^2)$, we have $ \triangle_h^{\rm ME}[P_h\rho]  = \triangle[\rho]+O(h^2).$
The proof follows by applying the one-dimensional argument of
Proposition~\ref{lemma:functional_consistency} in each coordinate direction and summing over all directions, and is therefore omitted.

At the fully discrete level, the directional generators may either be
assembled into a single multidimensional generator
$A_h=\sum_{k=1}^d A_{h,k}$, followed by one backward-Euler solve, or treated
successively by dimensional splitting. The first approach directly inherits
the mass conservation, nonnegativity, free-energy dissipation, and
equilibrium preservation established above. The splitting approach is more
attractive in high dimensions because each directional substep decomposes
into independent one-dimensional systems, thereby avoiding a coupled
multidimensional solve. To retain the same structural properties, each
directional substep must itself be discretized in a mass-conservative,
positivity-preserving, and free-energy-dissipative manner and must preserve
the same discrete Gibbs state. The resulting composition then preserves these properties, although it introduces a temporal splitting error. Moreover, an ordered composition is generally not reversible even when
every directional substep is reversible; reversibility of the full
transition operator requires either commuting directional operators or a
palindromic composition. Related dimensional-splitting strategies for
nonlinear and nonlocal aggregation--diffusion equations were developed in
\cite{BailoCarrilloHu2020}.

\begin{remark}
The edgewise construction extends naturally to unstructured meshes and
general graphs. Once the neighboring pairs, node or control-volume weights,
and edge transmissibilities have been specified, for instance, following the setup in \cite{zeng2024analysis}, the discrete
energy--dissipation relation and detailed balance can be imposed on each
connection exactly as above. Thus, the construction is algebraically well
defined on any weighted graph. The challenge lies in interpreting it as a discretization of a continuous Fokker--Planck
equation, which requires the resulting graph operator to approximate the
underlying differential operator. This depends on how the node weights and
edge transmissibilities encode the geometry of the domain. For regular admissible finite-volume meshes, convergence of the corresponding discrete gradient-flow structures
was established in \cite{forkert2022evolutionary}. For a general graph,
identifying a continuum differential operator, when one exists, is a separate
graph-to-continuum problem.

In the linear fixed-potential setting, the resulting jump rates are
independent of the evolving density and define a reversible continuous-time Markov chain whose invariant distribution is the Gibbs state. Simulating individual trajectories of this chain therefore provides a natural
route to structure-preserving MCMC methods \cite{li2025discrete}, which will be explored in future work.
\end{remark}

\section{Numerical Experiments}\label{sec:numerics}

In this section, we test the five detailed-balance (DB) schemes introduced in Section~\ref{subsec:reconstructions}: SG/WPE, ED, IWPE, AM-$e^{-V}$, and LM-$e^{-V}$. Unless stated otherwise, the one-dimensional experiments use the periodic nodal grid
$x_i=ih$, $i=0,\ldots,N-1$, with $h=L/N$. Errors are reported in the discrete norms
\begin{equation}\label{eq:numerical_norms}
 \|e\|_{\ell^1}=h\sum_i|e_i|,\qquad
 \|e\|_{\ell^2}=\left(h\sum_i|e_i|^2\right)^{1/2},\qquad
 \|e\|_{\ell^\infty}=\max_i|e_i|,
\end{equation}
with $h_xh_y$ replacing $h$ in two dimensions. For all numerical experiments, we use the implicit Euler scheme or the semi-implicit schemes described in Section~\ref{sec:extensions}. Throughout this section, when we write $\Delta t=f(h)$, we mean the largest constant time step $\Delta t\leq f(h)$ that divides the final time $T$.

%-----------------------------------------------------------------------
\subsection{Smooth potential case}\label{subsec:exp1}
%-----------------------------------------------------------------------

We begin with a linear Fokker--Planck equation with a smooth potential
\begin{equation}\label{eq:V_smooth}
 V(x)=\sin(2\pi x)-\tfrac12\sin(4\pi x)
      +\tfrac13\sin(6\pi x),
\end{equation}
and consider the initial condition
\begin{equation}\label{eq:rho0_smooth}
 \rho_0(x)=Z_0^{-1}e^{-50(x-0.3)^2},
\end{equation}
where $Z_0$ normalizes the mass.

We first test transient accuracy by solving the Fokker--Planck equation until $T=0.05$ using each scheme on grids with
\(
N\in\{8,16,32,64,128,256\}
\)
and $\Delta t=h^2$.  Because no exact transient solution is available, we compute a reference solution using the AM-$e^{-V}$ scheme with $N_{\rm ref}=2048$ and
$\Delta t=h_{\rm ref}^2=2.38\times10^{-7}$.  The nodal grids are nested, so the reference is
restricted by exact sampling at coincident nodes. 
Fig.~\ref{fig:exp1_convergence} shows that all five DB schemes converge at second order in all three norms. The central difference and upwind schemes are second- and first-order accurate, respectively, as expected. Since all three norms yield the same observed order, we report only the $\ell^1$ error in the following experiments.

\begin{figure}[!htbp]
 \centering
 \includegraphics[width=\textwidth]{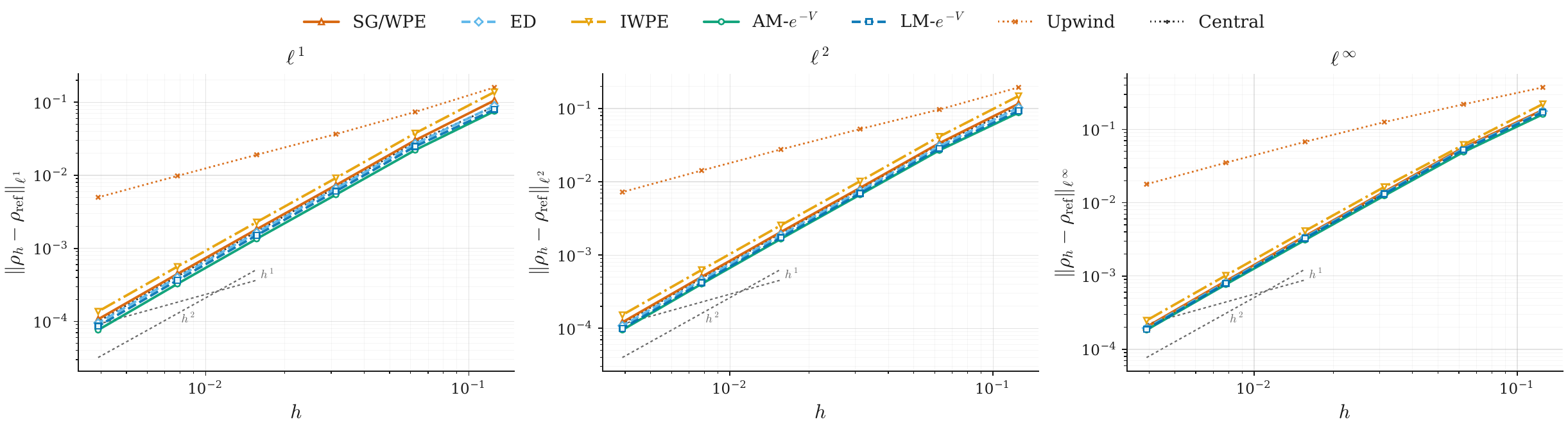}
 \caption{Spatial errors at $T=0.05$ for the smooth potential, measured
 against an AM-$e^{-V}$ reference at $N_{\rm ref}=2048$ restricted to coincident nodes, with
 $\Delta t=h^2$ on every grid.}
 \label{fig:exp1_convergence}
\end{figure}

Next, we consider the long-time behavior of the schemes.  We use $N=256$, $T=1.5$, and
$\Delta t = 10^{-5}$.  The reference is the continuum Gibbs state
$\pi=e^{-V}/Z$, where $Z$ is computed by adaptive quadrature with tolerance $10^{-13}$. Fig.~\ref{eq_smooth}(a)--(b) show that all schemes agree visually with the Gibbs profile very well and dissipate the discrete free energy. As shown in Fig.~\ref{eq_smooth}(c), the DB schemes approximate the Gibbs state more accurately than the upwind and central difference schemes. At $T=1.5$, the DB errors are at
round-off, while the upwind and central difference errors are $1.18\times10^{-2}$ and
$1.12\times10^{-4}$, respectively.  Although the central difference and all DB schemes are both second-order in space for the transient dynamics, only DB schemes reproduce the discrete Gibbs state $\pi_{h,i}\propto e^{-V_i}$ exactly. Fig.~\ref{eq_smooth}(d) further shows the equilibrium error as a function of $h$, using $N\in\{16,32,64,128,256\}$ with $T=1.5$ and $\Delta t=10^{-5}$. Because the DB schemes converge to the exact discrete Gibbs state, the only error comes from the quadrature error between $Z$ and $Z_h$. The DB curves
show no algebraic order because $Z_h$ is the periodic trapezoidal rule applied to a smooth
integrand and is therefore spectrally accurate.  The equilibrium error decreases from
$1.52\times10^{-6}$ at $N=16$ to about $10^{-12}$ at $N=32$ and then remains at round-off. In contrast, the upwind and central-difference schemes exhibit first- and second-order equilibrium errors, respectively, consistent with the analysis of Section~\ref{subsec:trad_disc}. The upwind and central difference rate ratios approximate the exact detailed-balance ratio with local errors of order $O(h^2)$ and $O(h^3)$, respectively, for smooth potentials, and one power is lost when the local errors accumulate over the $O(h^{-1})$ edges of the grid.

\begin{figure}[!h]
  \includegraphics[width = \textwidth]{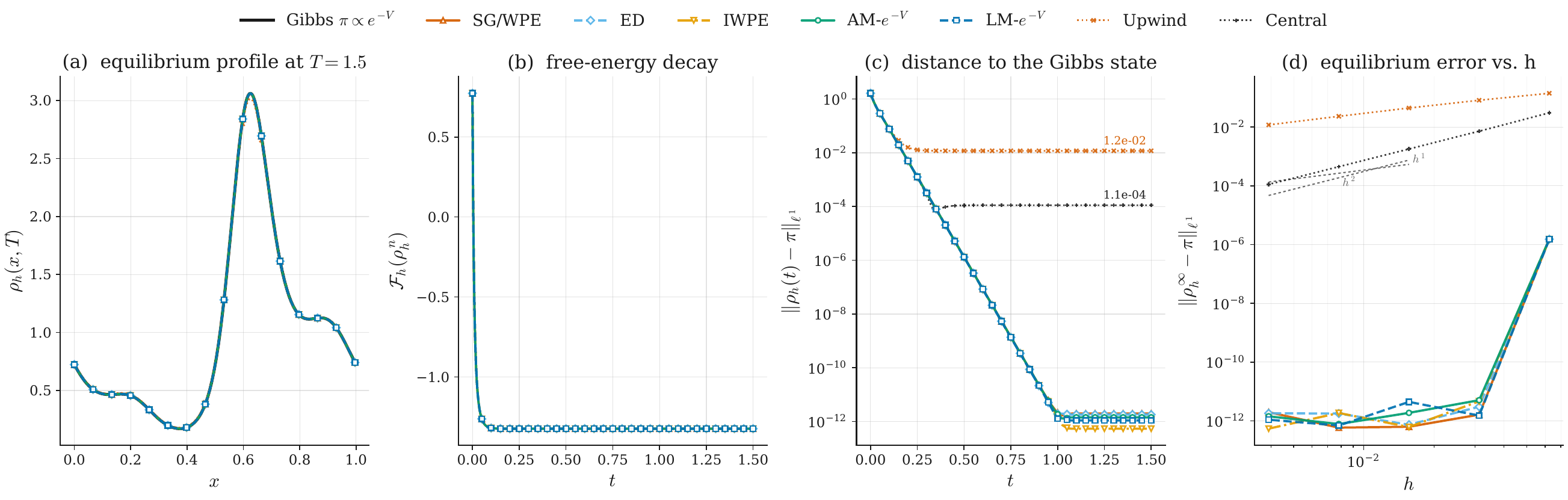}
  \caption{Long-time behavior of all seven schemes at $N=256$, $T=1.5$,
  $\Delta t=10^{-5}$.  (a) Numerical solutions at $T=1.5$ compared with the continuum Gibbs
  state.  (b) Discrete free energy as a function of time.  (c) Distance to the continuum
  Gibbs state in the $\ell^1$ norm as a function of time.  (d) The same equilibrium error
  refined in $h$ over $N\in\{16,\ldots,256\}$.}\label{eq_smooth}
\end{figure}

%-----------------------------------------------------------------------
\subsection{Discontinuous potential}\label{subsec:exp2}
%-----------------------------------------------------------------------

We next consider a potential with a jump at $x=1/2$:
\begin{equation}\label{eq:V_disc}
V(x)=
\begin{cases}
3-6\sin\!\bigl(\tfrac\pi2(x+\tfrac12)\bigr),&x<\tfrac12,\\
3-6\sin\!\bigl(\tfrac\pi2(x-\tfrac12)\bigr),&x\ge\tfrac12,
\end{cases}
\qquad
\rho_0(x)=Z_0^{-1}\max\{1+\cos(2\pi x),0\},
\end{equation}
so that $\delta V=6$ across the interface.  This test is motivated by the examples in
\cite{wang2003robust,wang2007convergence}. Following \cite{wang2007convergence}, we use the cell-centered grid
$x_i=(i+\tfrac12)h$.  For every even $N$, the jump lies at the interface between
$x_{N/2-1}$ and $x_{N/2}$, so all grids resolve the same geometry. 

As shown in \cite{wang2003robust, wang2007convergence}, the central difference scheme can be unstable for discontinuous potentials, producing negative densities and numerical blow-up. For this example, running the central difference scheme with $N=128$ and $\Delta t=h^2$ until $T=0.1$ gives $\max_i|\rho_i|= 2.124\times10^2$ and $\min_i\rho_i= -1.031\times10^2$.  We therefore compare only the five DB schemes and the upwind scheme below.

Fig.~\ref{fig:exp2_profiles}(a) shows the solution at
$t=0.001,0.005,0.01,0.05$ on $N=128$ against the IWPE reference solution at
$N_{\rm ref}=2048$.  We choose $\Delta t=h^2$ and $\Delta t_{\rm ref}=h_{\rm ref}^2$.  The five DB profiles separate only in the immediate neighborhood of the jump. Fig.~\ref{fig:exp2_profiles}(b) reports same-scheme self-convergence at $T=0.005$.
We take $N=2^k$, $k=4,\ldots,8$, and keep $\Delta t=10^{-6}$ fixed across the family.  Each
coarse solution is compared with the same scheme on the doubled grid; this avoids limiting
IWPE by the error of a first-order reference scheme. Because the cell-centered grids do not nest
under refinement, fine-grid values are transferred to the coarse grid with a local four-point Lagrange interpolant, using a one-sided stencil at the jump.  Its $O(h_{\rm ref}^4)$ interpolation error remains below the discretization errors reported below.
The early final time also prevents the
common discrete equilibrium of the DB schemes from masking their transient differences.
On the finest pair, the $\ell^1$ orders are $2.00$ for IWPE, $1.06$--$1.10$ for SG/WPE,
ED, AM-$e^{-V}$, and LM-$e^{-V}$, and $0.99$ for upwind. 

\begin{figure}[htbp]
 \centering
 \begin{overpic}[width= 0.56 \textwidth]{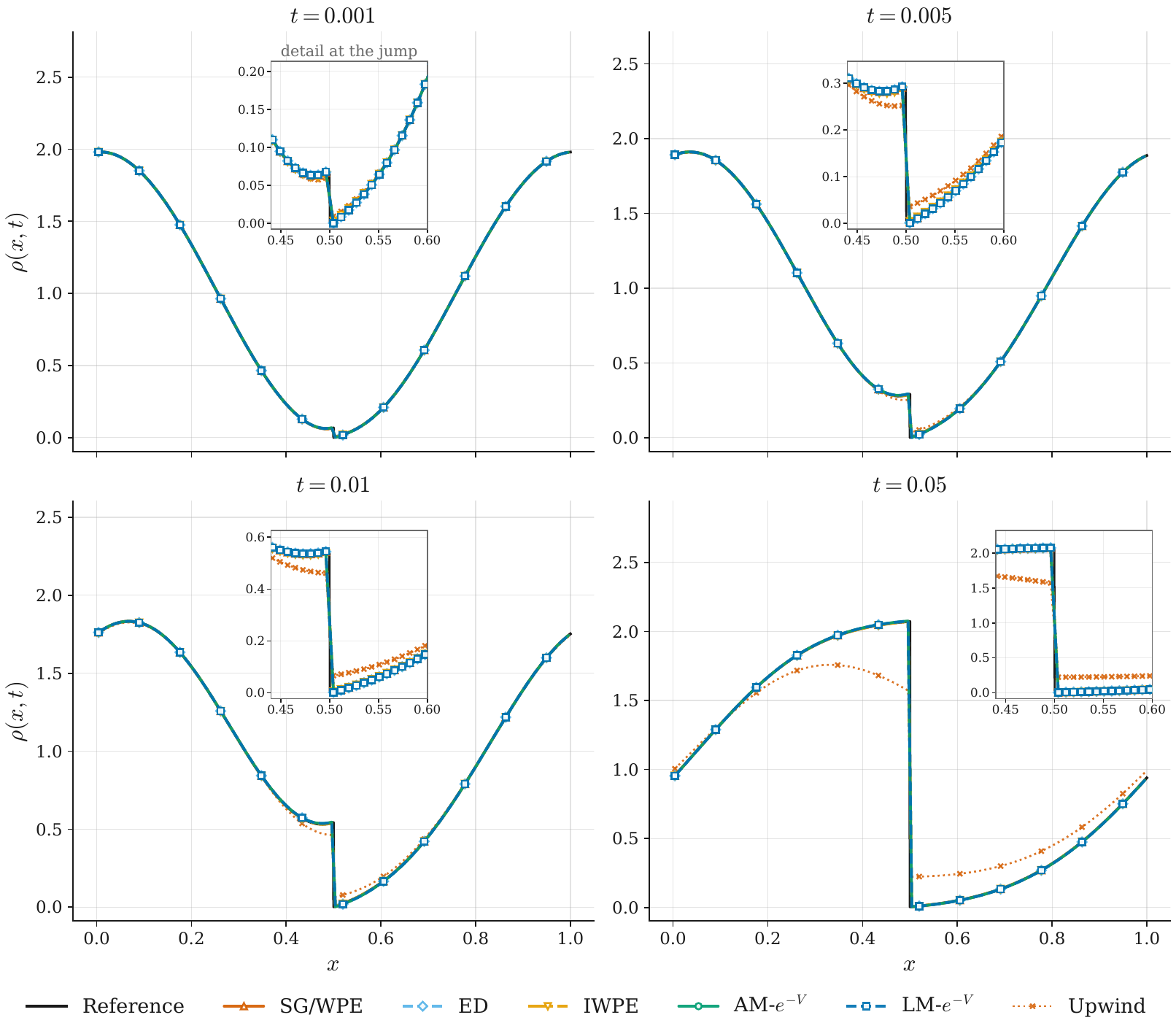}
    \put(-5, 82){(a)}
 \end{overpic}
 \hfill
 \begin{overpic}[width= 0.4 \textwidth]{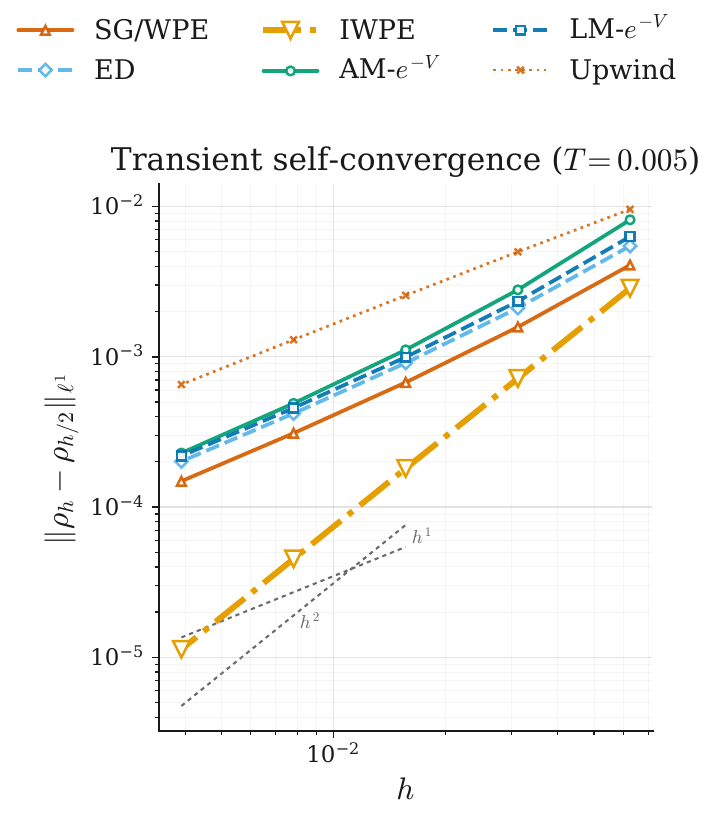}
    \put(-3, 100){(b)}
 \end{overpic}
 \caption{(a) solution profiles at $t=0.001,0.005,0.01,0.05$ for $N=128$
 against the IWPE reference at $N_{\rm ref}=2048$, with insets magnifying the jump at
 $x=1/2$; $\Delta t=h^2$. (b) Transient spatial accuracy at $T=0.005$ by same-scheme self-convergence,
 $N\in\{16,\ldots,256\}$ with $\Delta t=10^{-6}$ fixed across the family.}
 \label{fig:exp2_profiles}
\end{figure}

To test the long-time behavior, we use $N=256$, $T=0.75$, and $\Delta t=10^{-5}$.  The continuum
reference is $\pi=e^{-V}/Z$, with $Z=6.689334$ computed by splitting the quadrature at the
jump.  Fig.~\ref{eq_disc}(a)--(c) shows that the DB schemes accurately reproduce the
discontinuous Gibbs profile and dissipate the discrete free energy monotonically.  Their
$\ell^1$ error against $\pi$ is $4.46\times10^{-8}$.  Upwind also dissipates energy but settles above
$\mathcal F_h[\pi_h]$, with an $\ell^1$ equilibrium error of $3.48\times10^{-1}$.  Although
only IWPE is second-order accurate during the transient, all five DB schemes have the same equilibrium
accuracy because they share the same discrete Gibbs state $\pi_h$.

Fig.~\ref{eq_disc}(d) refines this equilibrium error in $h$ over $N\in\{32,64,128,256\}$
at the same $T=0.75$ and $\Delta t=10^{-5}$.  In contrast with Experiment~1, the $O(h^2)$ rate
now appears: all five DB schemes yield almost the same errors corresponding to an observed order of $2.00$, because the Euler--Maclaurin argument behind the spectral accuracy of $Z_h$
fails once $V$ has a jump.  The upwind error stalls at $3.58\times10^{-1}$--$3.48\times10^{-1}$
(order $0.01$) and its $\ell^\infty$ error even grows slightly under refinement, from
$1.046$ to $1.068$, because the error in the upwind ratio $F_{i+\frac12}/B_{i+\frac12}$ relative to its detailed-balance value is $O(1)$ and does not vanish under mesh refinement at a discontinuity of $V$, as discussed in Section~\ref{subsec:trad_disc}. 

\begin{figure}[!h]
\includegraphics[width = \textwidth]{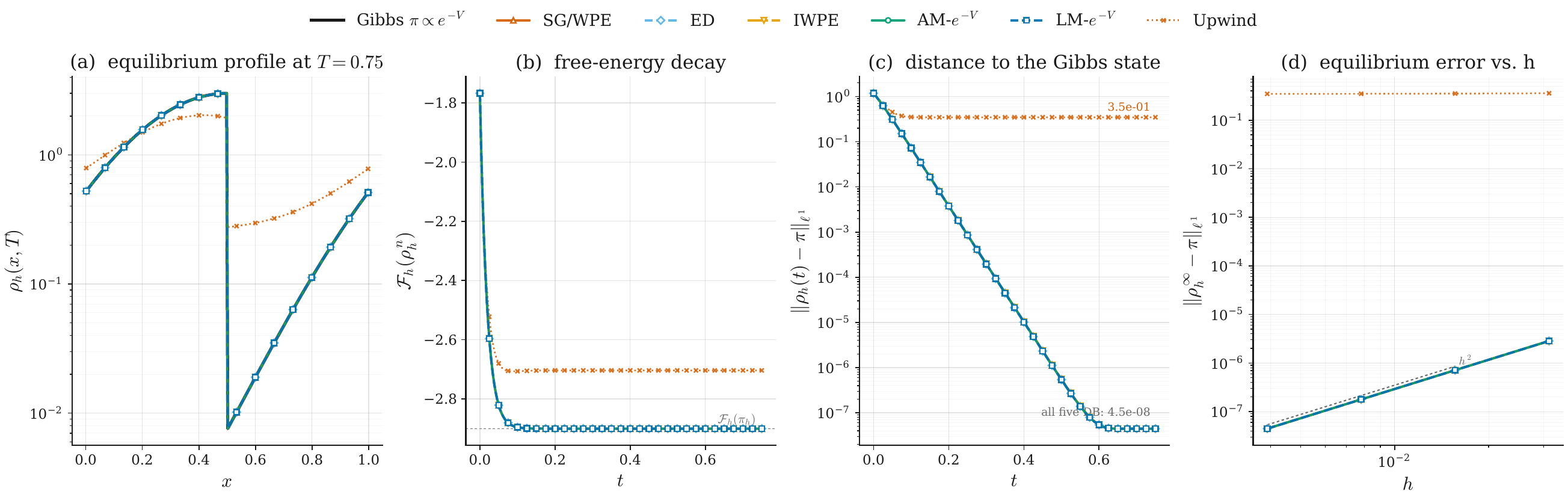}
\caption{Long-time behavior for the discontinuous potential at $N=256$,
$T=0.75$, $\Delta t=10^{-5}$.  (a) Solutions at $T=0.75$ against the continuum Gibbs state
(logarithmic scale).  (b) Discrete free energy, with $\mathcal F_h[\pi_h]$ marked.
(c) $\ell^1$ distance to the Gibbs state as a function of time.  (d) The same equilibrium
error refined in $h$ over $N\in\{32,\ldots,256\}$.}\label{eq_disc}
\end{figure}

%-----------------------------------------------------------------------
\subsection{Nonlinear mobility and saturation}\label{subsec:exp3}
%-----------------------------------------------------------------------

For the nonlinear-mobility extension, we consider the
one-dimensional saturation model \cite{carrillo2024structure} 
\begin{equation}\label{eq:saturation_model}
    \partial_t\rho
    =
    \partial_x\left[
        \rho(\alpha-\rho)
        \partial_x\left(
            \ln\rho+\frac{1}{2}x^2
        \right)
    \right],
    \qquad x\in[-4,4],
\end{equation}
subject to no-flux boundary conditions. The corresponding free energy
and mobility are
\begin{equation}\label{eq:saturation_energy}
    \mathcal F[\rho]
    =
    \int_{-4}^{4}
    \left[
        \rho(\ln\rho-1)
        +\frac{1}{2}x^2\rho
    \right]\dd x,
    \qquad
    m(\rho)=\rho(\alpha-\rho).
\end{equation}
The physically admissible state space is
$0\leq\rho\leq\alpha$. We take $\alpha=1$
and use the uniform initial condition $\rho_0(x)=\frac{M}{8}$.

The equilibrium is the minimizer of \eqref{eq:saturation_energy}
under the mass constraint $\int_{-4}^{4}\rho\,\dd x=M$ and the
saturation constraint $\rho\leq\alpha$ \cite{carrillo2024structure}. Define
\[
    Z_\Omega
    =
    \int_{-4}^{4}
    \exp\left(-\frac{1}{2}x^2\right)\dd x,
    \qquad
    M_c=\alpha Z_\Omega.
\]
For $M\leq M_c$, the constraint is inactive and the equilibrium is
the Gibbs state
\begin{equation}\label{eq:saturation_subcritical}
    \rho_\infty(x)
    =
    \frac{M}{Z_\Omega}
    \exp\left(-\frac{1}{2}x^2\right).
\end{equation}
For $M>M_c$, the constraint becomes active and the equilibrium
contains a saturated plateau:
\begin{equation}\label{eq:saturation_supercritical}
    \rho_\infty(x)
    =
    \alpha
    \exp\left[
        -\frac{1}{2}(x^2-\ell^2)_+
    \right],
\end{equation}
where $\ell>0$ is determined by $\int_{-4}^{4}\rho_\infty(x)\,\dd x=M$.
Both branches are normalized over $[-4,4]$ rather than over $\mathbb R$, which is the
relevant convention here because the numerical solutions conserve mass on the domain; this gives
$M_c=2.50647$ on $[-4,4]$, against the whole-line value $\alpha\sqrt{2\pi}=2.50663$.
We consider both a subcritical case, $M=2$, and a supercritical case,
$M=3.32$, as in \cite{carrillo2024structure}. For the latter, the plateau half-width is $\ell=1.00678$, and $\rho_\infty=\alpha$ on $[-\ell,\ell]$.

To apply the construction of Section~\ref{subsec:nonlinear_mob}, we write $a(\rho)=m(\rho)/\rho=\alpha-\rho$. To preserve nonnegativity, we set the nodal kinetic coefficients to
    $a_i^n=(\alpha-\rho_i^n)_+$
and reconstruct them at the interfaces using the harmonic mean,
\begin{equation}\label{eq:saturation_harmonic}
    \widehat a_{i+\frac12}^n
    =
    \begin{cases}
    \displaystyle
    \frac{2a_i^n a_{i+1}^n}{a_i^n+a_{i+1}^n},
        & a_i^n+a_{i+1}^n>0,\\[2mm]
    0,  & a_i^n+a_{i+1}^n=0.
    \end{cases}
\end{equation}
The resulting edge coefficient is then combined with the potential
reconstruction of each DB scheme as in \eqref{rates_nonlinear_M_product}. All
mobilities are frozen at time level $n$ and the density is advanced
using the semi-implicit update \eqref{SI_nonlinear_M}.  The positive-part operator defining $a_i^n$
prevents any numerical overshoot from producing a negative kinetic coefficient and turning
the generator into a backward diffusion.

\begin{figure}[htbp]
 \centering
 \includegraphics[width=\textwidth]{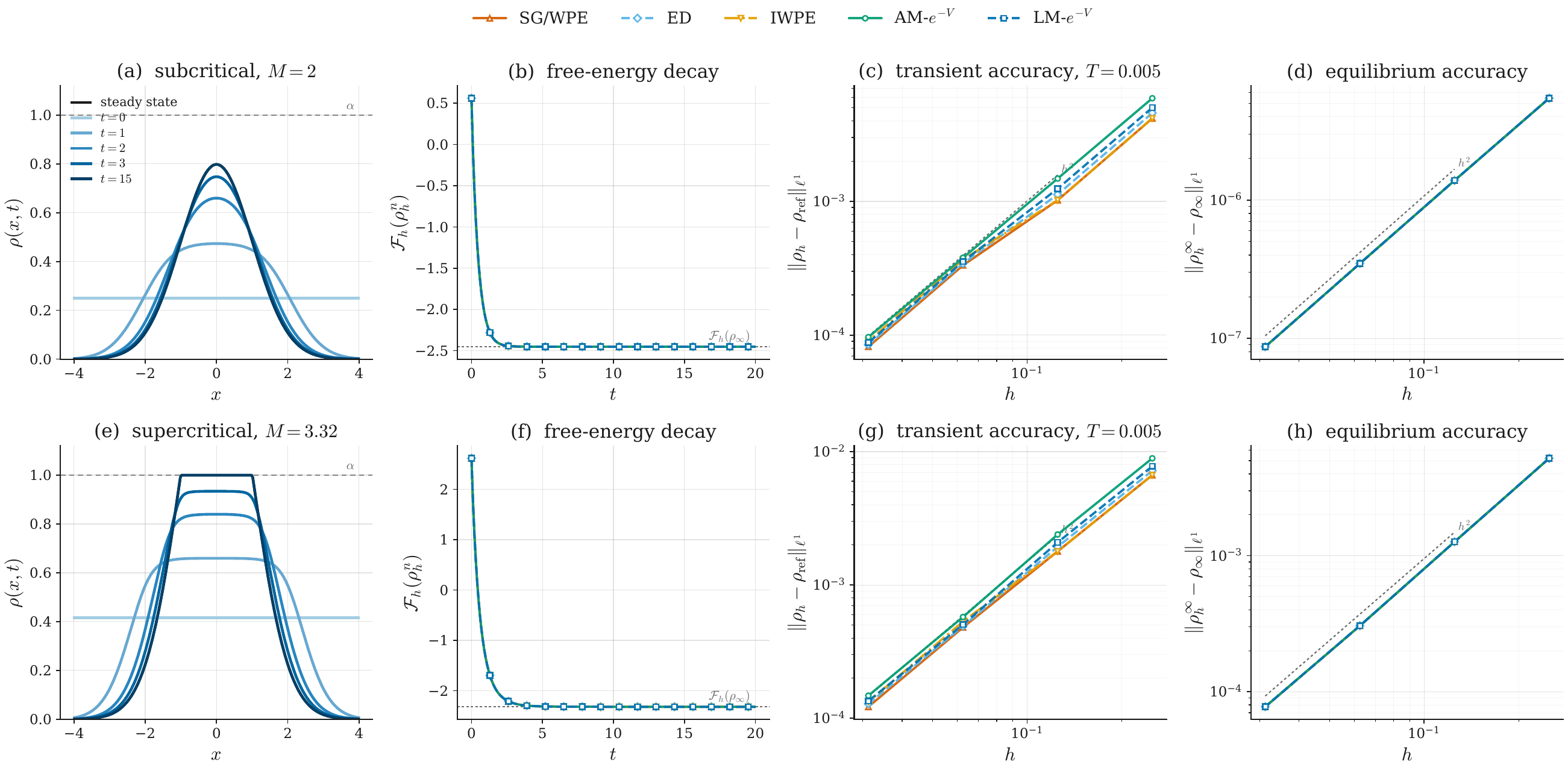}
 \caption{The saturation model \eqref{eq:saturation_model} in the subcritical
 regime $M=2$ (top row) and the supercritical regime $M=3.32$ (bottom row).
 (a), (e): AM-$e^{-V}$ snapshots at $t=0,1,2,3,15$ on $N=256$ with $\Delta t=0.01$, against the
 analytic steady state and the bound $\alpha$.  (b), (f): Discrete free energy for the five DB
 schemes, with $\mathcal F(\rho_\infty)$ marked. (c), (g): Transient accuracy at $T=0.005$,
 $N\in\{32,\ldots,256\}$, $\Delta t=10^{-5}$ fixed, against a spline-sampled reference at
 $N_{\rm ref}=1024$.  (d), (h): Equilibrium accuracy at $T=20$ with $\Delta t=0.01$ against
 \eqref{eq:saturation_subcritical}--\eqref{eq:saturation_supercritical}.}
 \label{fig:saturation}
\end{figure}

In this numerical test, the grid is cell-centered, $x_i=-4+(i+\tfrac12)h$ with $h=8/N$, so that the $N$ cells carry
exactly $N-1$ interior edges; the no-flux condition is imposed by setting the forward and
backward rates of the wrap-around edge to zero.
Fig.~\ref{fig:saturation} reports the results.  The dynamics in
Fig.~\ref{fig:saturation}(a) and (e) use $N=256$, $\Delta t=0.01$, and $T=20$, with snapshots at
$t=0,1,2,3,15$.  For the transient test,
Fig.~\ref{fig:saturation}(c) and (g), the error at $T=0.005$ is measured on
$N\in\{32,64,128,256\}$ with $\Delta t=10^{-5}$ held fixed across the family, against a
fine-grid reference computed with the SG/WPE scheme at $N_{\rm ref}=1024$ and
$\Delta t=5\times10^{-7}$; because cell-centered grids do not nest under refinement by two, that
reference is sampled by a cubic spline, whose $O(h_{\rm ref}^4)$ error is far below the
coarse-grid errors.  The final time is chosen early enough that $\rho$ is still below $\alpha$
in both mass regimes, so that this test measures the smooth transient rather than the
constrained one.  The equilibrium test, Fig.~\ref{fig:saturation}(d) and (h), takes the
solution at $T=20$ with $\Delta t=0.01$ on the same grids and compares it with
\eqref{eq:saturation_subcritical} or \eqref{eq:saturation_supercritical}.

In the subcritical case, the solution converges to the Gaussian Gibbs state and its peak
remains $2.0\times10^{-1}$ below $\alpha$.  In the supercritical case, the schemes capture
the saturated plateau and the Gibbs-type tails, with
$\max_i\rho_i\leq\alpha$ up to round-off for every DB scheme.  Across both cases, the minimum
density is at least $2.8\times10^{-4}$, the mass error is below $2.5\times10^{-14}$, and the
largest positive free-energy increment is $8.9\times10^{-16}$.  On the finest pair, the
transient $\ell^1$ orders are $1.91$--$2.02$ and the equilibrium orders are
$1.98$--$2.00$ across the five DB schemes and both mass regimes.

The harmonic reconstruction is essential near the plateau: if either endpoint approaches
$\alpha$, the edge mobility approaches zero and suppresses further transport into the
saturated node.  An arithmetic reconstruction remains positive when only one endpoint is
saturated and can therefore generate grid-scale oscillations and overshoots.  Thus the edge
reconstruction affects both transient accuracy and preservation of the admissible state. It is worth mentioning that the harmonic reconstruction alone does not make the frozen-mobility update unconditionally
upper-bound preserving: an excessively large step can move $\rho_i^{n+1}$ above $\alpha$
before the adjacent edge mobilities are deactivated. In all runs reported here we take $\Delta t = 0.01$ with $h = 8/N, N \in \{32, \ldots, 256\}$, and no violation of the saturation bound is observed.

%-----------------------------------------------------------------------
\subsection{Nonlocal aggregation--diffusion}\label{subsec:exp4}
%-----------------------------------------------------------------------
In this subsection, we consider an example of a nonlocal aggregation--diffusion equation with a smooth interaction kernel. We set the external potential to zero. Since we consider periodic boundary conditions on the interval \([-1,1]\), we use the attractive interaction kernel obtained by periodizing a Gaussian:
\begin{equation}\label{eq:exp3_kernel}
K_{\rm per}(r)
=
-10\sum_{m\in\mathbb Z}
\exp \left(-\frac{(r+2m)^2}{2\sigma^2}\right),
\qquad
\sigma=0.2.
\end{equation}
In the numerical computation, we truncate the sum to \(m=-4,\ldots,4\).
For this value of \(\sigma\), all neglected terms are smaller than machine
precision. We use the periodized Gaussian rather than evaluating the
Gaussian at the shortest periodic distance, because the periodized kernel
retains the nonpositive Fourier coefficients of the original attractive
Gaussian. 
The initial condition is the unit-mass sum of two Gaussian bumps centered at
$x=\pm0.4$, each with standard deviation $0.07$.

At every step, we compute $\widetilde V_i^n=(\mathcal K_h\bm\rho^n)_i$ by FFT, assemble the
generator $A_h^n$ from \eqref{eq:lagged_rates}, and solve
$(I-\Delta t\,A_h^n)\bm\rho^{n+1}=\bm\rho^n$.  The main run uses
$N=256$ ($h=1/128$), $\Delta t=10^{-5}$, and $T=4$.  Since no closed-form equilibrium is
available, the reference state is the fixed point of the discretization itself,
$\bm\rho=e^{-\mathcal K_h\bm\rho}/Z_h$, computed by Picard iteration with damping $0.5$ from the same
initial condition used in the time-dependent runs; it converges in $85$ iterations to a round-off-level residual.  Starting the iteration from $\rho_0$ matters, because the energy is not
convex on a periodic domain and every translate of a bump, as well as the uniform state, is
also an equilibrium.
As an independent check of our computations, we also run the first-order nonlinear upwind scheme \cite{carrillo2015finite}, which upwinds the chemical potential
$\ln\bm\rho+\mathcal K_h\bm\rho$.  Its chemical potential is lagged and the resulting operator is advanced with backward Euler. 

\begin{figure}[htbp]
 \centering
 \includegraphics[width=\textwidth]{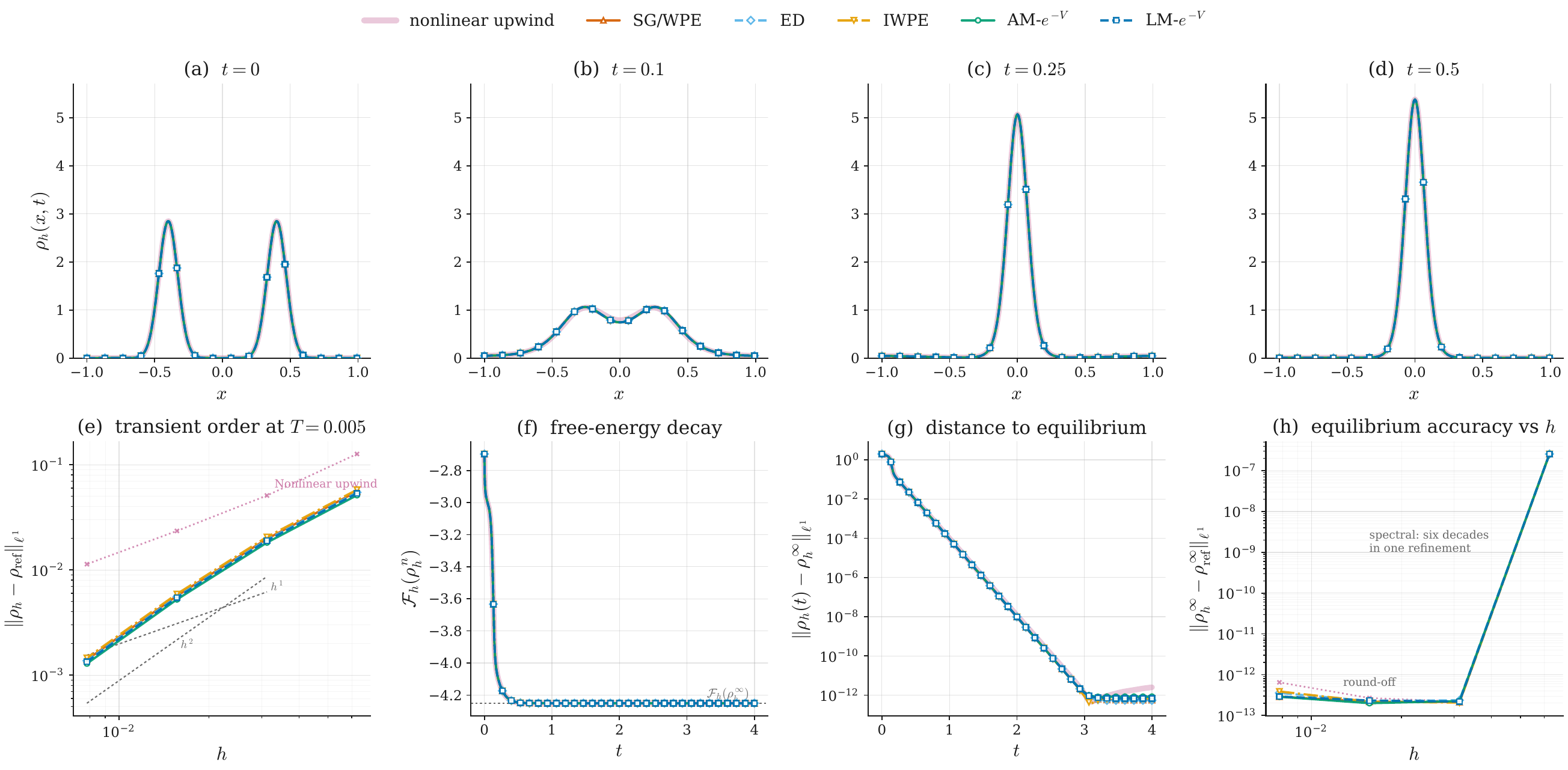}
 \caption{Nonlocal aggregation--diffusion at $N=256$, $\Delta t=10^{-5}$.
 (a)--(d) The merger at $t=0,0.1,0.25,0.5$ for the five DB schemes, with the nonlinear
 upwind scheme of \cite{carrillo2015finite} drawn as a pale underlay.  (e) Transient spatial order
 at $T=0.005$ against an ED reference at $N_{\rm ref}=1024$; the DB schemes use $\Delta t=h^2$, and upwind uses $\Delta t=0.4h^2$.  (f) Free-energy decay to $\mathcal F_h[\rho_h^\infty]$.  (g) Distance
 to the discrete fixed point.  (h) The same equilibrium error refined in $h$: spectrally
 accurate, hence no order in $h$.}
 \label{fig:exp3_merging}
\end{figure}

Fig.~\ref{fig:exp3_merging}(a)--(d) shows the merger at $t=0,0.1,0.25,0.5$; the five DB
schemes and nonlinear upwind are visually indistinguishable.  All runs preserve positivity
and conserve mass.  At $T=4$, the $\ell^1$ distances to the Picard fixed point range from
$5.52\times10^{-13}$ to $2.53\times10^{-12}$ across all six methods, as shown in Fig.~\ref{fig:exp3_merging}(g). Fig.~\ref{fig:exp3_merging}(e) measures transient spatial accuracy at $T=0.005$ on
$N\in\{32,64,128,256\}$ against an ED reference at $N_{\rm ref}=1024$, restricted by exact
sampling at coincident nodes.  The DB schemes use $\Delta t=h^2$, while nonlinear upwind
uses $\Delta t=0.4h^2$; the smaller step is required for the upwind iteration to reach the
correct branch.  On the finest pair, the DB orders are $2.02$--$2.03$ and the upwind order
is $1.05$. Fig.~\ref{fig:exp3_merging}(h) refines the \emph{equilibrium error} over the same grids at $T=4$ with
$\Delta t=h^2$, against the fixed point at $N_{\rm ref}=1024$.  Neither family exhibits an algebraic convergence order, and this is again the expected behavior: both converge to $\bm\rho=e^{-\mathcal K_h\bm\rho}/Z_h$, a
Nystr\"om discretization of the continuum fixed-point equation by the periodic trapezoidal
rule, which is spectrally accurate for this smooth kernel.  The error drops six decades
between $N=32$ and $N=64$ and then sits at round-off. % the first-order transient rate of the upwind does not determine the equilibrium accuracy, because it characterizes the transient trajectory rather than the limiting state.

%-----------------------------------------------------------------------
\subsection{Two-dimensional case}\label{subsec:exp5}
%-----------------------------------------------------------------------

Finally, we consider a two-dimensional example. On the periodic square $[0,1]^2$, we use a banana-shaped potential,
\begin{equation}\label{eq:exp4_potential}
\begin{aligned}
 V(x,y)&=0.8\,[1-\cos(2\pi(x-\tfrac12))]
 +10\,[1-\cos(2\pi(y-y_c(x)))],
\end{aligned}
\end{equation}
where $y_c(x) =\tfrac14+0.18\,[1-\cos(2\pi(x-\tfrac12))]$ and initialize
$\rho_0(x,y)=Z_0^{-1}\exp[-60((x-1/2)^2+(y-1/2)^2)]$, a Gaussian centered away from the
minimum of $V$.
On an $N\times N$ grid, the generator is assembled as the sum of horizontal
and vertical edge generators,
\begin{equation}
 A_h=A_h^x+A_h^y,
 \qquad
 F^x_{i+1/2,j}=h^{-2} \psi(V_{i+1,j}-V_{i,j}),
 \quad
 F^y_{i,j+1/2}=h^{-2} \psi(V_{i,j+1}-V_{i,j}),
\end{equation}
with the backward rates obtained by reversing the potential differences. Here $\psi$ is the
rate function of the chosen reconstruction (see (\ref{eq:psi_rates})).

\begin{figure}[htbp]
 \centering
 \includegraphics[width=\textwidth]{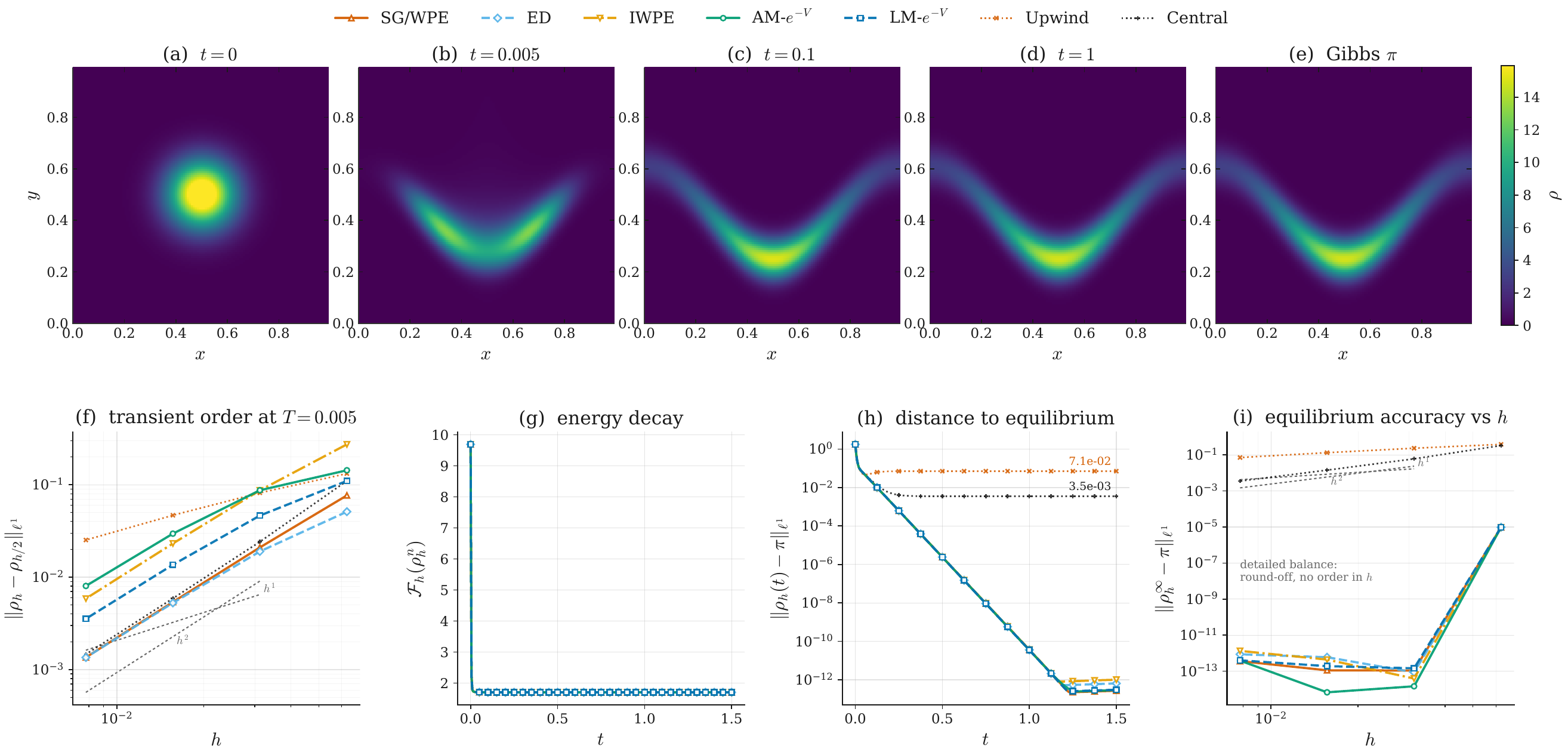}
 \caption{Two-dimensional example.  (a)--(d) AM-$e^{-V}$ snapshots at $t=0,0.005,0.1,1$ on
 $N=128$ with $\Delta t=10^{-3}$, and (e) the discrete
 Gibbs state.  (f) Transient order by same-scheme self-convergence at $T=0.005$ with
 $\Delta t=10^{-5}$ fixed.  (g) Free-energy decay.  (h) Distance to the Gibbs state.
 (i) Equilibrium error refined in $h$ at $T=2$ against the continuum Gibbs state.}
 \label{fig:exp4_snapshots}
\end{figure}

The relaxation run uses $N=128$, $\Delta t=10^{-3}$, and $T=1.5$; the snapshots
in Fig.~\ref{fig:exp4_snapshots}(a)--(d) are taken at $t=0,0.005,0.1,1$ using AM-$e^{-V}$ and
are compared with the discrete Gibbs state in Fig.~\ref{fig:exp4_snapshots}~(e). Fig. ~\ref{fig:exp4_snapshots}~(h) shows the distance to the Gibbs state for all schemes. As in the one-dimensional case with a smooth potential, the DB schemes approximate the Gibbs state more accurately than the upwind and central difference schemes. At $T=1.5$, the DB errors are at
round-off, while the upwind and central difference errors are $7.1\times10^{-2}$ and
$3.5\times10^{-3}$, respectively.   Fig.~\ref{fig:exp4_snapshots}(f) measures same-scheme self-convergence by comparing the solution on an $N\times N$ grid at coincident nodes with the solution obtained using the same method on a $2N\times2N$ grid, for
$N\in\{16,32,64,128\}$.  We use $T=0.005$ and keep $\Delta t=10^{-5}$ fixed across the
family. All DB schemes show second-order accuracy, while upwind is first-order accurate and central differences are second-order accurate.
Fig.~\ref{fig:exp4_snapshots}(i) refines the equilibrium error in $h$ over $N\in\{16,32,64,128\}$ at $T=2$ with
$\Delta t=10^{-3}$, against the continuum Gibbs state, whose normalization is computed
by the midpoint rule on a $1024^2$ grid, since $V$ is smooth and
periodic.  As in Experiment~1, the DB schemes sit at round-off with no order in $h$, while
upwind has an observed order of $0.90$ and central differences have an observed order of $2.02$.

\section{Conclusion}\label{sec:conclusion}
We developed a variational--Markov construction of detailed-balance
master-equation discretizations for Fokker--Planck equations.  Its main
feature is the separation of three roles on every edge in the resulting master equation: the projected
energy--dissipation law fixes the net flux, detailed balance fixes the ratio
of the two directional rates, and a local reconstruction fixes their common
symmetric coefficient.  This separation makes precise why transient and
equilibrium accuracy are distinct.  Detailed balance determines the
discrete Gibbs state and guarantees free-energy decay, whereas consistency
of the symmetric coefficient with the continuum mobility controls the
transient dynamics. The exact density-dependent rates reproduce the
projected dissipation edge by edge. In the linear fixed-potential setting, the practical density-independent rates
retain exact detailed balance and approximate the continuum
energy--dissipation law to second order for smooth positive solutions.

The ED, SG/WPE, IWPE, AM-$e^{-V}$, and LM-$e^{-V}$ rates all arise from
second-order reconstructions within this construction.  Each satisfies the
multiplicative symmetry associated with detailed balance and therefore has
the sampled Gibbs state as an exact stationary state.  Among these five
choices, SG/WPE is distinguished by also satisfying the additive
consistency relation of the classical $B$-scheme formulation.  Thus the
framework both recovers established exponential-fitting schemes and
identifies the structural choice made by each reconstruction.

We further developed fully discrete schemes.
For a fixed potential, backward Euler gives a linear, uniquely solvable
update that preserves mass and nonnegativity and dissipates the discrete
free energy for every time-step size. For nonlinear mobilities, freezing a state-dependent mobility
preserves these properties, including when some edge mobilities vanish.  For nonlocal interactions, the lagged-potential semi-implicit treatment also requires only
a linear solve per step and is unconditionally mass-conservative and
positivity preserving.  It dissipates the true nonlocal energy
unconditionally for a negative-semidefinite interaction kernel; for a
general symmetric kernel, the exact energy-increment decomposition isolates
the quadratic remainder and yields a kernel-dependent time-step condition.
The edgewise construction further extends to general $f$-divergence
energies and Cartesian grids in multiple dimensions. The numerical experiments confirm both the structural guarantees and the
distinction between transient and equilibrium accuracy in the smooth and discontinuous potential, nonlinear mobility, and nonlocal interaction cases, and in two dimensions. 

Several directions remain open. A natural next step is to develop a
convergence theory for the variational--Markov discretizations, particularly
for discontinuous potentials, nonlinear mobilities, and nonlocal
interactions. Because the discrete free energy and dissipation are
constructed together, the analysis may be carried out at the
energy--dissipation level: uniform discrete energy and dissipation estimates
may provide the stability and compactness needed to pass to the continuum
limit. Another direction is to adapt the same construction to sampling
methods for high-dimensional probability distributions, where
tensor-product grids are infeasible. In this setting, the detailed-balance
ratio can be used to preserve the target distribution, while the symmetric
component of the transition rates can be designed to control, and potentially
accelerate, the sampling dynamics.

\section*{Acknowledgments}
The authors would like to thank Professor Chun Liu for helpful discussions. This work was partially supported by NSF DMS-2410740.

\section*{Declaration of generative AI}

During the preparation of this work, the authors used ChatGPT (OpenAI) and Claude (Anthropic) to improve the clarity and language of the manuscript and to assist with the development of numerical code. The authors reviewed, edited, tested, and verified all AI-assisted outputs and take full responsibility for the content and results presented in the article.

\bibliographystyle{unsrt} 
\bibliography{FP}

\end{document}